\theoremstyle{plain}
\newtheorem{theorem}{Theorem}[section]
\newtheorem{lemma}[theorem]{Lemma}
\newtheorem{corollary}[theorem]{Corollary}
\newtheorem{definition}[theorem]{Definition}
\newtheorem{proposition}[theorem]{Proposition}
\newtheorem{remark}[theorem]{Remark}
\newtheorem{example}[theorem]{Example}
\renewcommand{\L}[0]{\mathrm{L}}
\newcommand{\Lip}[0]{\mathrm{Lip}}
\newcommand{\N}[0]{\mathbf N}
\newcommand{\jump}[1]{\ensuremath{[\![#1]\!]} }
\newcommand{\im}[0]{\mathrm{im}}
\newcommand{\cpt}[0]{\mathrm{c}}
\newcommand{\T}[0]{\triangle}
\newcommand{\A}[0]{\mathfrak A}
\newcommand{\Lipb}[0]{\mathrm{Lip_b}}
\newcommand{\BT}[0]{\mathrm{Lip_b^{BT}}}
\newcommand{\MT}[0]{\mathrm{Lip_b^{MT}}}
\newcommand{\CO}[0]{\mathrm{CO}}
\title
[Current and measure homologies]
{The coincidence of the current homology and the measure homology via a new topology on spaces of Lipschitz maps}
\author{Ayato Mitsuishi}
\address
{Mathematical Institute, Tohoku University, Sendai 980-8578, JAPAN}
\email[A.~Mitsuishi]{mitsuishi@math.tohoku.ac.jp}
\begin{document}
\maketitle

\begin{abstract}
We consider the category of all locally Lipschitz contractible metric spaces and all locally Lipschitz maps, which is a wide class of metric spaces, including all finite dimensional Alexandrov spaces and all CAT spaces. 
We also consider the chain complex of normal currents with compact support in a metric space in the sense of Ambrosio and Kirchheim. 
In the present paper, its homology is proved to be a homotopy invariant on the category. 

To prove this result, we define a new topology on a space of Lipschitz maps between arbitrary metric spaces.  
This topology is proved to coincide with the usual $C^1$-topology on the space of $C^1$-maps between compact Riemannian manifolds. 
\end{abstract}

\section{Introduction} \label{sec:intro}
Currents in smooth manifolds were introduced by de Rham (\cite{DR}), which were defined to be continuous linear functionals on the spaces of smooth forms. 
Currents in metric spaces (called metric currents) were introduced by Ambrosio and Kirchheim (\cite{AK}), by providing a space of formal forms on metric spaces (see also \cite{L}). 
When we will say just currents, they will indicate metric currents. 
The space of all normal currents of compact support in a metric space (and in a smooth manifold) is known to be a chain complex.
We call its homology the {\it current homology}. 
It is a group depending on the metric structure. 
Here, a natural question arise: 
is it a topological invariant? 
In the present paper, we give an answer to this probelm. 


To state our results, let us fix notation.
For a metric space $X$, we denote by 
$\mathbf H_\ast(X)$ the current homology, that is the homology of normal currents with compact support. 
We also consider the {\it measure homology} $\mathscr H_\ast(X)$ introduced by Thurston \cite{T}.
Thurston originally defined his measure homology only for smooth manifolds. 
In the paper, we use a slightly modified version of the measure homology which can be defined for any topological space, 
considered by Hansen (\cite{H}) and Zastrow (\cite{Z}). 

Let us consider metric spaces satisfying the following condition. 
\begin{definition}[\cite{Y}, \cite{MY}, cf. \cite{Mi}] \label{def:LLC} \upshape
A metric space $X$ is said to be {\it locally Lipschitz contractible}, 
shortly LLC, if for every $x \in X$ and open neighborhood $O$ of $x$, there exist an open set $O'$ of $X$ with $x \in O' \subset O$, a Lipschitz map $h : O' \times [0,1] \to O$ and a point $y \in O$ such that $h_0$ is the inclusion $O' \hookrightarrow O$ and $h_1$ is the constant map of value $y$, where $h_t$ denotes the map $h(\cdot,t)$ for $t \in [0,1]$.
\end{definition}

\begin{example} \upshape \label{ex:LLC}
There are many objects being locally Lipschitz contractible in metric geometry. 
For instance, normed spaces, Riemannian manifolds, locally CAT-spaces, and finite dimensional locally Alexandrov spaces, satisfy this property.
Here, locally CAT spaces (resp. locally Alexandrov spaces) are length metric spaces of curvature bounded locally from above (resp. below) in the sense of Alexandrov. 
For their precise definitions and fundamental properties, we refer to \cite{BGP} and \cite{BBI}.
Among these examples, it is not trivial that finite dimensional locally Alexandrov spaces are LLC. 
This fact was proved in \cite{MY}. 
Further, $C_k$-spaces and $C_L$-spaces in the sense of Ohta \cite{O} are also LLC. 
All spaces appeared above are actually known to satisfy a property stronger than the LLC-condition, called the strong local Lipschitz contractibility. 
For the definition of this property, see \cite{MY}.
For other properties similar to the LLC-condition and their relation, we refer \cite{Mi} and its reference.

For an LLC metric space $X$, if $O$ is an open subset of it, then $O$ is also LLC, and if another metric space $Y$ is locally bi-Lipschitz homeomorphic to $X$, then $Y$ is LLC (\cite{Mi}).
\end{example}

Using above terminologies, 
a main result is stated as follows. 
\begin{theorem} \label{thm:main thm0}
On the category of all LLC metric spaces and all locally Lipschitz maps, there is a natural isomorphism $\mathbf H_\ast \cong \mathscr H_\ast$ between the current and measure homologies.

That is, for each LLC metric space $X$, there is an isomorphism $\eta_X : \mathbf H_\ast(X) \to \mathscr H_\ast(X)$ such that for any locally Lipscihtz map $f : X \to Y$ to an LLC metric space $Y$, we have $\mathscr H_\ast(f) \circ \eta_X = \eta_Y \circ \mathbf H_\ast(f)$.
\end{theorem}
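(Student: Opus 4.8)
The plan is to build a single natural chain map from measure chains to compactly supported normal currents and to show it is a quasi-isomorphism; the stated $\eta_X$ will then be its inverse on homology. Concretely, for a Lipschitz singular simplex $\sigma \colon \Delta^k \to X$, the pushforward $\sigma_\# \jump{\Delta^k}$ of the integration current of the standard simplex is a normal $k$-current with compact support. First I would extend this assignment by integration: given a measure $k$-chain $\mu$ (a compactly supported, bounded signed measure on the space of Lipschitz simplices), set $\Psi_X(\mu) = \int \sigma_\# \jump{\Delta^k} \, d\mu(\sigma)$, defining a map $\Psi_X \colon \mathscr C_\ast(X) \to \N_\ast^\cpt(X)$. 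For this to make sense I would invoke the new topology on $\Lip(\Delta^k, X)$ introduced earlier in the paper: it is designed precisely so that $\sigma \mapsto \sigma_\# \jump{\Delta^k}$ is continuous into the space of normal currents with the weak topology, with locally uniform control on mass and support. This continuity guarantees that the integral exists, lands in $\N_k^\cpt(X)$, and has compact support.

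Second, I would verify that $\Psi_X$ is a chain map and is natural. Commutation with the boundary operators reduces to the identity $\partial(\sigma_\# \jump{\Delta^k}) = \sigma_\# \partial \jump{\Delta^k} = \sum_i (-1)^i (\sigma \circ \partial_i)_\# \jump{\Delta^{k-1}}$, so that the simplicial face maps correspond to the current boundary via Stokes' theorem; integrating against $\mu$ preserves this. Naturality under a locally Lipschitz map $f \colon X \to Y$ follows from functoriality of pushforward, $f_\#(\sigma_\# \jump{\Delta^k}) = (f \circ \sigma)_\# \jump{\Delta^k}$, together with continuity of $f_\#$ in the new topology. Thus $\Psi$ induces a natural transformation $\Psi_\ast \colon \mathscr H_\ast \to \mathbf H_\ast$ on the category of LLC spaces and locally Lipschitz maps.

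Third, I would prove that $\Psi_\ast$ is an isomorphism by an axiomatic comparison. The measure homology is already known to agree with real singular homology and to be homotopy invariant (Hansen, Zastrow). The genuinely new input is homotopy invariance of the current homology: from a Lipschitz homotopy $H \colon X \times [0,1] \to Y$ I would build a prism (chain homotopy) operator by crossing currents with the oriented interval $\jump{0,1}$ and pushing forward, and the new topology again ensures this operator is well defined on $\N_\ast^\cpt$ with the required continuity. Next I would establish a Mayer--Vietoris sequence for $\mathbf H_\ast$ over suitable open covers; here the LLC hypothesis furnishes the local Lipschitz contractions needed to split and patch chains. On a Lipschitz contractible space both homologies vanish above degree zero and equal $\mathbb{R}$ in degree zero, where $\Psi_\ast$ realizes the identity. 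A standard induction over a cover by contractible pieces, using naturality, homotopy invariance, Mayer--Vietoris, and the Five Lemma, then upgrades the isomorphism on contractible pieces to all LLC spaces. Setting $\eta_X = (\Psi_X)_\ast^{-1}$ yields the natural isomorphism in the stated direction.

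The hard part will be the two places where the new topology is invoked: showing that $\sigma \mapsto \sigma_\# \jump{\Delta^k}$ is continuous with locally uniform mass and support bounds (so that integration produces genuine compactly supported normal currents and does not leave the normal class), and the analogous continuity of the prism operator underlying homotopy invariance. Controlling mass uniformly as $\sigma$ varies in a measure chain, and ensuring the supports stay in a fixed compact set, is the technical crux on which both the definition of $\Psi$ and the homotopy invariance of $\mathbf H_\ast$ rest; everything else is formal homological algebra once these continuity statements are in hand.
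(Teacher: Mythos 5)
There is a genuine gap at the very first step: your map $\Psi_X$ cannot be defined on the measure chain complex $\mathscr C_\ast(X)$. A measure $k$-chain is, by definition, a compactly determined signed measure on $C(\T^k,X)$ with the \emph{compact-open} topology; it may charge simplices that are not Lipschitz at all, where $\sigma_\#\jump{1_{\T^k}}$ does not exist, and even for a measure concentrated on Lipschitz simplices with compact support in the compact-open topology the integrand $\sigma\mapsto[\sigma](f\,d\pi)$ need not be integrable: Remark \ref{rem:not integrable} exhibits an explicit such measure $V_\#\mathcal L^1$ on $\Lip(\T^2,\mathbb R^2)$ for which the integral diverges, and Remark \ref{rem:not conti} shows $[\,\cdot\,](f\,d\pi)$ is discontinuous in the compact-open topology. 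The new topology rescues integrability only for measures whose compact determination set is compact in \emph{that} topology, i.e.\ for the smaller complex $\mathscr C_\bullet^\L$; so what your construction actually produces is the paper's map $\mathscr C_\bullet^\L\to\N_\bullet^\cpt$ (Theorem \ref{thm:Tmu}), and you are missing the second leg of the paper's zig-zag $\mathscr C_\bullet\leftarrow\mathscr C_\bullet^\L\to\N_\bullet^\cpt$, namely the comparison $\mathscr H_\ast^\L\to\mathscr H_\ast$. That comparison is genuinely nontrivial: Proposition \ref{prop:mL} shows $\mathscr H_\ast^\L$ and $\mathscr H_\ast$ differ on general metric spaces (e.g.\ snowflakes), so it is an actual theorem on LLC spaces, not a formality. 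Your appeal to Hansen--Zastrow cannot bridge it either, since their identification of measure homology with singular homology requires CW homotopy type, which LLC metric spaces may lack --- the introduction states this is precisely why the uniqueness-of-homology-theories route is closed.

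The globalization step also would not go through as proposed. A ``standard induction over a cover by contractible pieces'' via Mayer--Vietoris and the Five Lemma works for finite good covers (compact manifolds, finite CW-complexes), but an arbitrary LLC metric space offers no finite or well-ordered cover structure on which such an induction could close up. The paper replaces this by the cosheaf comparison theorem on paracompact spaces (Theorem \ref{thm:coincidence}, from \cite{Mi}), whose hypotheses are what actually get verified: $\mathscr C_\bullet$ and $\mathscr C_\bullet^\L$ are flabby cosheaves (Corollary \ref{cor:mc is cosheaf}); the degree-zero anchor is Lemma \ref{lem:0-th}, the isomorphism $\mathscr C_0^\L(X)\cong\N_0^\cpt(X)$ via Riesz representation, compatible with augmentations (Lemma \ref{lem:commute1}); and only \emph{local} triviality of the reduced homology precosheaves is needed (Theorems \ref{thm:LT mh}, \ref{thm:LT Lmh}, \ref{thm:LT ch}), not global homotopy invariance of $\mathbf H_\ast$. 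Note also that a Mayer--Vietoris splitting for $\N_\bullet^\cpt$ is a matter of slicing normal currents subordinate to a cover (the cosheaf property established in \cite{Mi}); it does not follow from local Lipschitz contractions, which enter only in killing local homology classes. Your prism-operator idea does match the paper's proof of Theorem \ref{thm:LT Lmh}, but as stated the proposal both lacks a well-defined comparison map on $\mathscr C_\ast$ and lacks a patching mechanism valid for general LLC spaces.
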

A relative version of Theorem \ref{thm:main thm0} also holds (Theorem \ref{thm:main thm:rel}).

We should remark that the measure homology and the current homology were proved to satisfy the axiom of Eilenberg and Steenrod in \cite{H}, \cite{Z} and \cite{Mo}. 
However, there is an LLC metric space which does not have the homotopy type of CW-complex (\cite{Mi}). 
Hence, we can not apply the uniqueness of homology thoery to prove Theorem \ref{thm:main thm0}.
We will use the cosheaf theory to prove Theorem \ref{thm:main thm0}.

From the fundamental properties of currents and the measure homology, together with Theorem \ref{thm:main thm0}, we obtain the following corollaries. 

\begin{corollary} \label{cor:top inv}
The functor $\mathbf H_\ast$ can be extended to the category of all LLC metric spaces and all {\rm continuous} maps, which is natrually isomorphic to $\mathscr H_\ast$.
In partiular, if two LLC metric spaces $X$ and $Y$ are homotopic, then the groups $\mathbf H_\ast(X)$ and $\mathbf H_\ast(Y)$ are isomorphic.
\end{corollary}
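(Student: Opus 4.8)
The plan is to treat Corollary \ref{cor:top inv} as a formal consequence of Theorem \ref{thm:main thm0} by transport of structure along the natural isomorphism $\eta$. The starting observation is the asymmetry between the two homologies: the measure homology $\mathscr H_\ast$ is, by its very construction following \cite{H} and \cite{Z}, a functor on the category of all topological spaces and continuous maps, whereas the current homology $\mathbf H_\ast$ is a priori only functorial with respect to locally Lipschitz maps, since the pushforward of a normal current requires Lipschitz regularity. The idea is to use the object-wise isomorphisms $\eta_X : \mathbf H_\ast(X) \to \mathscr H_\ast(X)$ supplied by Theorem \ref{thm:main thm0} to pull the continuous functoriality of $\mathscr H_\ast$ back onto $\mathbf H_\ast$.

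Concretely, I would keep the objects (LLC metric spaces) unchanged and, for an arbitrary continuous map $f : X \to Y$ between LLC metric spaces, define the induced homomorphism by the conjugation formula
\[
\widetilde{\mathbf H}_\ast(f) := \eta_Y^{-1} \circ \mathscr H_\ast(f) \circ \eta_X .
\]
Functoriality is then immediate: $\widetilde{\mathbf H}_\ast(\mathrm{id}_X) = \eta_X^{-1} \circ \mathrm{id} \circ \eta_X = \mathrm{id}$, and for composable continuous maps $X \to Y \to Z$ the inner factors $\eta_Y$ and $\eta_Y^{-1}$ cancel, so that $\widetilde{\mathbf H}_\ast(g \circ f) = \widetilde{\mathbf H}_\ast(g) \circ \widetilde{\mathbf H}_\ast(f)$ follows from the corresponding identity for $\mathscr H_\ast$. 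The crucial point is that this genuinely extends the original functor: when $f$ is locally Lipschitz, the naturality clause of Theorem \ref{thm:main thm0} reads $\mathscr H_\ast(f) \circ \eta_X = \eta_Y \circ \mathbf H_\ast(f)$, whence $\eta_Y^{-1} \circ \mathscr H_\ast(f) \circ \eta_X = \mathbf H_\ast(f)$ and therefore $\widetilde{\mathbf H}_\ast(f) = \mathbf H_\ast(f)$. By construction the family $\{\eta_X\}$ now realizes a natural isomorphism $\widetilde{\mathbf H}_\ast \cong \mathscr H_\ast$ on the enlarged category, the naturality square commuting tautologically.

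For the homotopy invariance I would invoke the homotopy axiom for the measure homology, which holds because $\mathscr H_\ast$ satisfies the Eilenberg--Steenrod axioms (\cite{H}, \cite{Z}). Thus homotopic continuous maps induce equal homomorphisms on $\mathscr H_\ast$, and by the conjugation formula the same is true for $\widetilde{\mathbf H}_\ast$; in particular a homotopy equivalence $X \simeq Y$ yields mutually inverse isomorphisms on $\mathscr H_\ast$, hence an isomorphism $\mathbf H_\ast(X) \cong \mathscr H_\ast(X) \cong \mathscr H_\ast(Y) \cong \mathbf H_\ast(Y)$ after transporting through $\eta$.

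A word on where the content sits: given Theorem \ref{thm:main thm0}, the corollary is a purely formal transport-of-structure argument and presents no genuine obstacle. The one point demanding care---and the conceptual heart of the statement---is that the extension must be consistent with the original Lipschitz-functoriality, and this consistency is exactly the naturality assertion of Theorem \ref{thm:main thm0} rather than anything one could arrange by hand. Without naturality, the conjugation formula would still define some functor isomorphic to $\mathscr H_\ast$, but not necessarily one restricting to $\mathbf H_\ast$ on locally Lipschitz maps; it is precisely naturality that upgrades the object-wise isomorphism into an extension of the current homology functor.
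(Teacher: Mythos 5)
Your proposal is correct and is essentially the paper's own argument: the paper's proof is the one-line citation ``This follows from Theorem \ref{thm:main thm0} and results in \cite{H} and \cite{Z},'' and your conjugation formula $\eta_Y^{-1} \circ \mathscr H_\ast(f) \circ \eta_X$, with naturality of $\eta$ guaranteeing consistency on locally Lipschitz maps and the homotopy invariance of $\mathscr H_\ast$ from \cite{H}, \cite{Z} supplying the second assertion, is exactly the intended fleshing-out of that citation. Your closing remark correctly identifies naturality as the point that makes the extension well defined rather than merely an abstract isomorphism.
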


\begin{corollary} \label{cor:Haus dim}
Let $X$ be an locally Lipschitz contractible metric space of Hausdorff dimension less than $n$, for $n \in \mathbb Z_{\ge 0}$. 
Then, its measure homology $\mathscr H_k(X)$ is trivial for every $k \ge n$.
\end{corollary}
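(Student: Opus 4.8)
The plan is to reduce the statement to a vanishing result about metric currents and then invoke the dimension bound of Ambrosio and Kirchheim. Since $X$ is LLC, Theorem \ref{thm:main thm0} provides an isomorphism $\mathscr H_k(X) \cong \mathbf H_k(X)$ for every $k$, so it suffices to prove that the current homology $\mathbf H_k(X)$ vanishes for $k \ge n$. Recall that $\mathbf H_\ast(X)$ is the homology of the chain complex whose degree-$k$ term is the space of normal $k$-currents of compact support, and that normal currents are in particular of finite mass. Hence the whole corollary will follow if I can show that there is no nonzero $k$-current of finite mass on $X$ once $k > \dim_H X$: if the chain group in degree $k$ is trivial for all $k \ge n$, then $\ker \partial_k = 0$ and therefore $\mathbf H_k(X) = 0$ for those $k$. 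Note that $\dim_H X < n \le k$ gives $\mathcal H^k(X) = 0$ for every $k \ge n$, so this is exactly the reduction that is needed.

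The key step, which I expect to be the heart of the matter, is: if $\mathcal H^k(X) = 0$ then every metric $k$-current $T$ of finite mass on $X$ is zero. First I would fix $f \in \Lipb(X)$ and a $k$-tuple $\pi = (\pi_1, \dots, \pi_k)$ of Lipschitz functions, assembled into a Lipschitz map $\pi : X \to \mathbb R^k$, and consider the pushforward $\pi_\#(T \llcorner f)$, which is a $k$-dimensional (hence top-dimensional) current of finite mass on $\mathbb R^k$. By the representation theorem of \cite{AK}, such a current is of the form $\theta\,\mathcal L^k$ for some $\theta \in L^1(\mathbb R^k)$. On the other hand, its mass measure is concentrated on $\pi(\operatorname{supp}(T \llcorner f)) \subseteq \pi(X)$; since $\pi$ is Lipschitz it does not increase Hausdorff dimension, so $\mathcal H^k(\pi(X)) = 0$ and hence $\mathcal L^k(\pi(X)) = 0$. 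Therefore $\theta = 0$ almost everywhere, i.e.\ $\pi_\#(T \llcorner f) = 0$. Evaluating this vanishing current on $(1, y_1, \dots, y_k)$, where the $y_i$ are the coordinate functions on $\mathbb R^k$ (so that $y_i \circ \pi = \pi_i$) and $1$ is taken to be a cutoff equal to $1$ on the compact support, recovers $T(f, \pi_1, \dots, \pi_k)$, which is thus $0$. As $f$ and $\pi$ were arbitrary, $T = 0$.

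Combining the two steps finishes the argument: for $k \ge n$ the hypothesis forces $\mathcal H^k(X) = 0$, so there are no nonzero normal $k$-currents on $X$, whence $\mathbf H_k(X) = 0$ and, by Theorem \ref{thm:main thm0}, $\mathscr H_k(X) = 0$. The only genuine obstacle is the second paragraph, and even there the essential inputs—that finite-mass top-dimensional currents in $\mathbb R^k$ carry an $L^1$ density, and that the mass measure of a pushforward is concentrated on the image of the support—are already available in \cite{AK}; the remaining work is the bookkeeping needed to pass from the vanishing of all pushforwards $\pi_\#(T \llcorner f)$ back to the vanishing of $T$ itself.
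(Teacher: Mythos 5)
Your proposal is correct and follows essentially the same route as the paper: the paper likewise reduces via Theorem \ref{thm:main thm0} to showing $\mathbf H_k(X)=0$ for $k\ge n$, and obtains the vanishing of the chain groups of normal (hence finite-mass) $k$-currents from $\mathcal H^k(X)=0$ by citing \cite[Theorem 3.9]{AK}. The only difference is that where the paper invokes that theorem as a black box, you sketch its proof (pushforward to $\mathbb R^k$, the $L^1$-density representation of top-dimensional finite-mass currents, and the Lebesgue-nullity of Lipschitz images), which is essentially Ambrosio--Kirchheim's own argument.
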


By \cite{H}, \cite{Z} 
and Theorem \ref{thm:main thm0}, we obtain 

\begin{corollary} \label{cor:sh}
If an LLC metric space $X$ has the homotopy type of a CW-complex, then the current homology is isomorphic to the singular homology of real coefficient. 

In particular, if $X$ is homotopic to a finite CW-complex, then $\mathbf H_k(X)$ are finite dimensional for all $k \ge 0$.
\end{corollary}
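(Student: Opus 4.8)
The plan is to deduce this corollary by chaining together the isomorphism of Theorem \ref{thm:main thm0} with the already established coincidence of the measure homology and the singular homology of real coefficients. First I would invoke Theorem \ref{thm:main thm0}: since $X$ is an LLC metric space, the natural isomorphism $\eta_X$ supplies $\mathbf H_\ast(X) \cong \mathscr H_\ast(X)$ directly.

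Next I would recall the results of Hansen \cite{H} and Zastrow \cite{Z}, who proved, for exactly the modified version of the measure homology used in this paper, that whenever a topological space has the homotopy type of a CW-complex its measure homology is isomorphic to the singular homology with real coefficients, i.e. $\mathscr H_\ast(X) \cong H_\ast(X;\mathbb R)$. The hypothesis required there is precisely the one we impose on $X$. Composing the two isomorphisms then gives $\mathbf H_\ast(X) \cong H_\ast(X;\mathbb R)$, which is the first assertion.

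For the second assertion, suppose $X$ is homotopic to a finite CW-complex $K$. The integral singular homology $H_k(K;\mathbb Z)$ of a finite CW-complex is finitely generated in each degree, so, since $\mathbb R$ is a torsion-free $\mathbb Z$-module, the universal coefficient theorem gives $H_k(K;\mathbb R) \cong H_k(K;\mathbb Z)\otimes \mathbb R$, a finite dimensional real vector space for every $k$. As singular homology is a homotopy invariant, $H_k(X;\mathbb R) \cong H_k(K;\mathbb R)$ is finite dimensional for all $k \ge 0$, whence $\mathbf H_k(X)$ is finite dimensional by the first assertion.

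I do not expect a genuine obstacle here: the statement is a formal consequence of Theorem \ref{thm:main thm0} and the cited coincidence with singular homology. The only point requiring care is confirming that the theorems of Hansen and Zastrow apply to the present notion of measure homology; since those authors work with precisely this modified measure homology and assume only the CW homotopy type, the argument goes through without modification.
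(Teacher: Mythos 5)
Your proposal is correct and follows essentially the same route as the paper: the author likewise combines the isomorphism $\mathbf H_\ast(X) \cong \mathscr H_\ast(X)$ from Theorem \ref{thm:main thm0} with the Hansen--Zastrow coincidence of measure homology and singular real homology (expressed in the paper via a commutative diagram induced by a homotopy equivalence to a finite CW-complex), concluding finite-dimensionality from $\dim H_\ast(Y) < \infty$ for $Y$ a finite CW-complex. Your extra remark via the universal coefficient theorem is a harmless elaboration of that last step.
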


\begin{corollary} \label{cor:alex}
Let $X$ be an $n$-dimensional compact Alexandrov space without boundary. 
Suppose that the manifold part of $X$ is orientable. 
Then, $\mathbf H_n(X)$ and $\mathscr H_n(X)$ are isomorphic to $\mathbb R$.
\end{corollary}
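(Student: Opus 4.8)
The plan is to reduce the computation of the top-degree current and measure homologies to that of ordinary singular homology $H_n(X;\mathbb R)$, and then to exploit the structure theory of Alexandrov spaces. First, since an $n$-dimensional Alexandrov space is LLC (see Example \ref{ex:LLC} and \cite{MY}), Theorem \ref{thm:main thm0} already supplies a natural isomorphism $\mathscr H_n(X)\cong\mathbf H_n(X)$, so it suffices to treat $\mathbf H_n(X)$. A compact finite-dimensional Alexandrov space is an MCS-space in the sense of Perelman and in particular has the homotopy type of a finite CW-complex; hence Corollary \ref{cor:sh} applies and yields $\mathbf H_n(X)\cong H_n(X;\mathbb R)$. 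Thus the whole statement follows once we show $H_n(X;\mathbb R)\cong\mathbb R$ (here I tacitly assume, as is customary, that $X$ is connected; otherwise each component contributes a copy of $\mathbb R$).

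Next I would record the relevant geometric structure. Write $M\subseteq X$ for the manifold part and $S=X\setminus M$ for the set of topologically singular points. By the structure theory of Alexandrov spaces (\cite{BGP}, \cite{BBI}), $M$ is an open, dense, connected $n$-manifold, and $S$ is closed of codimension at least two; indeed, a point fails to be a manifold point only when its space of directions is an $(n-1)$-dimensional positively curved space not homeomorphic to a sphere, which forces the transverse link to be at least two-dimensional, so in fact $\dim S\le n-3$. The hypotheses that $X$ has no boundary and that $M$ is orientable say precisely that $X$ is a closed, orientable $n$-pseudomanifold with oriented top stratum $M$.

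Given this, $H_n(X;\mathbb R)\cong\mathbb R$ is a standard fact about orientable pseudomanifolds; concretely I would run the Borel--Moore homology sequence of the closed pair $S\subseteq X$. Since $X$ is compact, $H^{\mathrm{BM}}_\ast(X)=H_\ast(X)$, and the long exact sequence $\cdots\to H^{\mathrm{BM}}_n(S)\to H^{\mathrm{BM}}_n(X)\to H^{\mathrm{BM}}_n(M)\to H^{\mathrm{BM}}_{n-1}(S)\to\cdots$, together with the vanishing $H^{\mathrm{BM}}_n(S)=H^{\mathrm{BM}}_{n-1}(S)=0$ (valid because $\dim S\le n-2$), gives $H_n(X;\mathbb R)\cong H^{\mathrm{BM}}_n(M;\mathbb R)$. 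As $M$ is a connected oriented $n$-manifold, its Borel--Moore fundamental class generates $H^{\mathrm{BM}}_n(M;\mathbb R)\cong\mathbb R$, and we are done. It is worth noting that, on the current side, this generator is represented by the fundamental current given by integration of forms over the oriented manifold part $M$ against $\mathcal H^n$; this current has finite mass $\mathcal H^n(X)<\infty$ and vanishing boundary precisely because $X$ has no boundary, which makes the isomorphism $\mathbf H_n(X)\cong\mathbb R$ geometrically transparent.

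The main obstacle is not the homological bookkeeping but marshalling the structure-theoretic inputs at the required strength: that a compact Alexandrov space has the homotopy type of a finite CW-complex (so that Corollary \ref{cor:sh} is available), that the singular set $S$ has codimension at least two, and that the no-boundary and orientability hypotheses combine to produce a genuine fundamental class rather than merely a nonzero class. Each of these rests on Perelman's stability theorem and the stratification theory of Alexandrov spaces; once they are in hand, the computation above is routine. A secondary technical point to verify is the vanishing of $H^{\mathrm{BM}}_k(S)$ above $\dim S$, which is immediate if one works with a triangulation of $X$ in which $S$ is a subcomplex.
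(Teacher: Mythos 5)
Your proposal is correct, and its skeleton is exactly the paper's: both reduce via Theorem \ref{thm:main thm0} to $\mathbf H_n(X)$, invoke the fact that a compact Alexandrov space is LLC with the homotopy type of a (finite) CW-complex so that Corollary \ref{cor:sh} yields $\mathbf H_n(X) \cong H_n(X;\mathbb R)$, and then conclude from $H_n(X;\mathbb R) \cong \mathbb R$. The only real divergence is the last input: the paper disposes of it in one line by citing Yamaguchi \cite{Y}, who proved precisely that a compact $n$-dimensional Alexandrov space without boundary with orientable manifold part has top homology $\mathbb R$, whereas you reprove it from scratch via the pseudomanifold structure, the codimension bound $\dim S \le n-3$ on the topologically singular set, and the Borel--Moore exact sequence of the closed pair $S \subset X$. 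Your version buys self-containedness and makes transparent where the no-boundary and orientability hypotheses enter (no codimension-one strata; a genuine fundamental class on the connected oriented top stratum), at the cost of importing the full MCS stratification theory that \cite{Y} already packages; the paper's version is the economical one. Your implicit connectedness assumption matches the paper's implicit convention for Alexandrov spaces (they are length spaces), and your remark about the fundamental current $\sigma \mapsto \int f \det(\nabla(\pi\circ\sigma))$ against $\mathcal H^n \lfloor M$ is a nice geometric gloss absent from the paper.

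One caution on a supporting step: you justify the vanishing $H^{\mathrm{BM}}_k(S) = 0$ for $k > \dim S$ by appealing to ``a triangulation of $X$ in which $S$ is a subcomplex.'' No such triangulation is known to exist --- triangulability of Alexandrov spaces beyond low dimensions is a well-known open problem, and Perelman's stability theorem gives only the MCS structure. Fortunately the step does not need it: for the sheaf-theoretic Borel--Moore homology of the locally compact, finite-dimensional space $S$, vanishing above the covering (hence cohomological) dimension is automatic, and for the identifications $H^{\mathrm{BM}}_n(X) \cong H_n(X;\mathbb R)$ and $H^{\mathrm{BM}}_n(M) \cong \mathbb R$ you only need that $X$ is a compact ANR (which follows from LLC, compactness and finite dimensionality) and that $M$ is a connected oriented topological $n$-manifold. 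With that repair your argument is complete.
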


In the course of the proof of Theorem \ref{thm:main thm0}, we need a new topology on a space of Lipschitz maps between arbitrary metric spaces. 
For metric spaces $Z$ and $X$, we denote by $\Lipb(Z,X)$ the space of all Lipschitz maps whose image is bounded. 
\begin{theorem} \label{thm:top Lip}
For any metric spaces $Z$ and $X$, there is a topology on $\Lipb(Z,X)$ satisfying the following.
\begin{itemize}
\item[(0)]
The topology on $\Lipb(Z,X)$ is metrizable and finner than the topology induced by the supremum distance.
\item[(1)] 
If a sequence $f_j$ converges to $f$ in the topology on $\Lipb(Z,X)$, then $\sup_j \Lip(f_j) < \infty$.
\item[(2)]
Let $\phi : X \to Y$ be a Lipschitz map between metric spaces. 
Then, a map $\phi_\# : \Lipb(Z,X) \to \Lipb(Z,Y)$ given by $\sigma \mapsto \phi \circ \sigma$ is continuous.
Further, if $X$ is a subset of $Y$ and $\phi$ is the inclusion $X \hookrightarrow Y$, then $\phi_\#$ is a topological embedding.
\item[(3)]
Let $\psi : W \to Z$ be a Lipschitz map between metric spaces. 
Then, a map $\psi^\# : \Lipb(Z,X) \to \Lipb(W,X)$ given by $\sigma \mapsto \sigma \circ \psi$ is continuous.
\item[(4)]
A canonical map 
\begin{align*}
\Lipb(Z,X) \times \Lipb(W,Y) \to \Lipb(Z \times W, X \times Y) 
\end{align*}
defined by $(\phi, \psi) \mapsto \phi \times \psi$ is continuous, where $\phi \times \psi$ is given by $(\phi \times \psi)(z,w) = (\phi(z),\psi(w))$ for $(\phi, \psi) \in \Lipb(Z,X) \times \Lipb(W,Y)$ and $(z,w) \in Z \times W$. 
\item[(5)] Let $\{\ast\}$ denote a single-point set. 
Then, a canonical map 
\[
\Lipb(\{\ast\}, X) \ni f \mapsto f(\ast) \in X
\]
is homeomorphic.
\end{itemize}
\end{theorem}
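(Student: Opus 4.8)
The statement packages a construction (the topology) together with six properties it must satisfy, so the plan has two layers: first isolate the \emph{shape} of topology forced by the properties, then build a metric realising it and verify (0)--(5). The decisive constraints are (0), (1) and (2). Property (0) asks for a metric refining the supremum distance $d_\infty$, so I will produce an honest distance $\rho$ on $\Lipb(Z,X)$ that dominates a positive multiple of the truncated distance $\min\{1,d_\infty\}$, whence a $\rho$-convergent sequence is automatically $d_\infty$-convergent. Once a metric is in hand, the functoriality clauses (2)--(4) become statements about sequences, since for metric spaces continuity is sequential; thus after constructing $\rho$ their verification reduces to the elementary multiplicativity estimates
\begin{align*}
\Lip(\phi\circ\sigma)\le \Lip(\phi)\,\Lip(\sigma),\qquad \Lip(\sigma\circ\psi)\le\Lip(\sigma)\,\Lip(\psi),\qquad \Lip(\phi\times\psi)=\max\{\Lip\phi,\Lip\psi\},
\end{align*}
together with the parallel contraction estimates for $d_\infty$ (e.g.\ $d_\infty(\phi\circ\sigma,\phi\circ\sigma')\le\Lip(\phi)\,d_\infty(\sigma,\sigma')$). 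Property (5) I expect to be immediate: a map out of a point has Lipschitz constant $0$ and $d_\infty$ reduces to $d_X$ on such maps, so whatever Lipschitz-dependent correction $\rho$ carries must vanish there.

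The core of the argument, and the source of all the difficulty, is to reconcile (1) with (2). Property (1) demands that $\rho$-convergence force $\sup_j\Lip(f_j)<\infty$; the naive way to guarantee this is to add a Lipschitz-constant term, e.g.\ $\rho(f,g)=d_\infty(f,g)+|\Lip f-\Lip g|$, or any monotone function of the pair $(\Lip f,\Lip g)$. Such a metric satisfies (0) and (1) but \emph{fails} (2): post-composition with a Lipschitz $\phi$ can raise the Lipschitz constant discontinuously in the limit. Concretely, if $f_j\to f$ with $\Lip f_j\to\Lip f$ while $\phi$ is flat along $\im f$ but steep on an arbitrarily thin collar that the images of the $f_j$ just enter, then $\Lip(\phi\circ f_j)$ stays bounded away from $\Lip(\phi\circ f)$, so $\phi\circ f_j\not\to\phi\circ f$ for this $\rho$. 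Hence $\rho$ must \emph{not} read off the exact value of the Lipschitz constant, only its boundedness. The clean topology with that feature is the inductive-limit topology obtained by writing $\Lipb(Z,X)=\bigcup_{L}\Lip_L(Z,X)$, giving each stratum $\Lip_L=\{\sigma:\Lip\sigma\le L\}$ the $d_\infty$-topology, and declaring $U$ open iff $U\cap\Lip_L$ is $d_\infty$-open for every $L$; this does satisfy (1)--(5), since a convergent sequence lives in a single stratum and converges there uniformly, and $\phi_\#,\psi^\#$ and the product map carry strata to strata. Unfortunately it is \emph{not} metrizable: the strata form a properly increasing tower of closed subspaces, so the limit is not first countable — exactly the obstruction familiar from strict inductive limits of Banach spaces.

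The resolution I would pursue is a metric strictly finer than the inductive limit in its convergent sequences, yet retaining enough stratified behaviour to keep (1)--(5). I would build $\rho$ from $d_\infty$ together with a term recording, \emph{one-sidedly and asymptotically}, how the Lipschitz constants of $f$ and $g$ compare on the scales where $f$ and $g$ already disagree, engineered so that: (i) a sequence with $\Lip\to\infty$ cannot converge, giving (1); (ii) the correction is dominated by the multiplicativity estimates above, so that $\phi_\#$, $\psi^\#$ and the product map stay $\rho$-continuous, giving (2)--(4); and (iii) the correction vanishes on point-maps, giving (5). Domination of the truncated $d_\infty$ gives the refinement half of (0). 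The embedding clause of (2) is then automatic: an isometric inclusion $X\hookrightarrow Y$ preserves both $d_\infty$ and every Lipschitz constant of maps landing in $X$, hence preserves $\rho$ exactly, so $\phi_\#$ is an isometry onto its image.

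I expect the single hardest point to be property (2), precisely the continuity of the chosen correction term under post-composition by an \emph{arbitrary} Lipschitz map. This is where both obvious constructions break down — the sum-metric loses continuity, the inductive limit loses metrizability — and it is the one place where the interplay between the sup-distance and the Lipschitz stratification is genuinely nontrivial; in effect the whole design of $\rho$ is dictated by forcing this one continuity without destroying (1). The remaining clauses (0), (3), (4), (5) should then follow by routine estimates once $\rho$ is correctly set up, with the triangle inequality for $\rho$ requiring a short separate verification.
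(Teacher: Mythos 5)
You stop exactly where the proof has to begin. After correctly diagnosing the tension between (1) and (2) and rejecting two candidate topologies (the sum-metric $d_\infty+|\Lip f-\Lip g|$, whose failure of (2) your collar example does establish, and the non-metrizable strict inductive limit over the strata $\{\Lip\le L\}$), you describe the metric you ``would pursue'' only by a wish-list: a correction term recording ``one-sidedly and asymptotically'' how Lipschitz constants compare, ``engineered so that'' (i)--(iii) hold. No distance function is ever written down, so none of (0)--(5) is actually verified; in particular the clause you yourself single out as the crux --- continuity of $\phi_\#$ for an arbitrary Lipschitz $\phi$, simultaneously with metrizability and (1) --- is precisely the point left open. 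This is a research plan, not a proof.

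Moreover, the design principle you extract --- that the metric ``must \emph{not} read off the exact value of the Lipschitz constant, only its boundedness'' --- is refuted by the paper's own solution. The paper linearizes the target functorially: it embeds $X$ isometrically by $\delta : X \to X_{x_0}^{\ast\ast} = \left(\Lip_{x_0}(X)\right)^\ast$ (the free/Arens--Eells construction), equips $\Lipb(Z,B)$ for a Banach space $B$ with the norm $\|f\|_\infty + \Lip(f)$, and gives $\Lipb(Z,X)$ the initial topology under $f \mapsto \delta\circ f$. This topology makes $\Lip(\cdot)$ genuinely \emph{continuous} (Proposition \ref{prop:met1}, Remark \ref{rem:Lang top}), and yet (2) holds, because the Lipschitz-type term in the metric is $\Lip(\delta\circ f - \delta\circ g)$ computed in the Banach space, not $|\Lip f - \Lip g|$: a Lipschitz map $\phi : X \to Y$ induces a \emph{bounded linear} operator $\phi_\# : X_{x_0}^{\ast\ast} \to Y_{\phi(x_0)}^{\ast\ast}$ (Proposition \ref{prop:covariant}), and for a linear $\Phi$ one has $\Lip(\Phi\circ u - \Phi\circ v)=\Lip(\Phi\circ(u-v))\le \|\Phi\|_{\mathrm{op}}\,\Lip(u-v)$, so your flat/steep-collar mechanism cannot fire after linearization. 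Likewise the embedding clause of (2) is not ``automatic'' as you assert --- that an inclusion preserves your undetermined $\rho$ exactly cannot be checked without a formula --- and in the paper it requires real input, namely the McShane--Whitney extension theorem, which shows an inclusion $X\hookrightarrow Y$ induces an isometric embedding $X_{x_0}^{\ast\ast}\to Y_{x_0}^{\ast\ast}$ (Lemma \ref{lem:inclusion}). The missing idea, in one line, is the functorial isometric embedding of the target into a Banach space; it is what reconciles (1) with (2), and without it (or some substitute construction) your proposal contains no proof of the theorem.
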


The property (1) of a topology in Theorem \ref{thm:top Lip} is a crucial difference from the compact-open topology (see Remark \ref{rem:01}).
The existence of such a topology will be discussed in \S \ref{sec:top Lip}.

\subsection{More details on Theorem \ref{thm:main thm0}} \label{subsec:presice statement}
Let us first give an intuitive explanation of a coincidence of currents and measures in the $0$-th chain groups.
By the definition, $0$-currents are continuous functionals on the space of real-valued bounded Lipschitz functions on a metric space $X$.
They are like Schwart's distributions.
On the other hands, a signed Borel measure on $X$ can be actually regarded as a distribution, 
by the integration of functions with respect to the measure. 
Thus, we have an intuitive identification:
\begin{equation} \label{eq:correspondence}
\{\text{$0$-currents in $X$}\} 
= \{\text{signed Borel measures on $X$}\}. 
\end{equation}
We will verify that the above equality actually has the meaning (Lemma \ref{lem:0-th}). 
This is considered as a higher dimensional analogue of the correspondence \eqref{eq:correspondence}. 

Let $\mathscr C_\bullet(X)$ denote the measure chain complex and $\N_\bullet^\cpt(X)$ denote the chain complex of normal currents with compact support. 
Thier homologies are no longer than $\mathscr H_\ast(X)$ and $\mathbf H_\ast(X)$. 
We will define some chain complex $\mathscr C_\bullet^\L (X)$ with chain maps 
\begin{align} 
&\mathscr C_\bullet^\L(X) \to \mathscr C_\bullet(X), \label{eq:incl} \\
&\mathscr C_\bullet^\L(X) \to \mathbf N_\bullet^\cpt(X). \label{eq:map}
\end{align}
Here, to prove that the map \eqref{eq:map} is well posed, we need a new topology stated as in Theorem \ref{thm:top Lip} (see Remarks \ref{rem:not conti} and \ref{rem:not integrable}). 

Theorem \ref{thm:main thm0} can be stated more precisely as follow. 
\begin{theorem} \label{thm:main thm}
On the category of all metric spaces and all locally Lipschitz maps, the correspondence \eqref{eq:incl} and \eqref{eq:map} are natural transformations between covariant functors $\mathscr C_\bullet$, $\mathscr C_\bullet^\L$ and $\mathbf N_\bullet$ to the category of all chain complices of real vector spaces and all chain maps. 

Further, if the natural transformations \eqref{eq:incl} and \eqref{eq:map} are restricted to the full subcategory consisting of all locally Lipschitz contractible metric spaces, then the corresponding induced maps between the homologies 
\[
\mathscr H_\ast \leftarrow \mathscr H_\ast^\L \to \mathbf H_\ast
\]
are isomorphic.
Here, $\mathscr H_\ast^\L$ denotes the homology of $\mathscr C_\bullet^\L$.
\end{theorem}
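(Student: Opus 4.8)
The plan is to treat Theorem \ref{thm:main thm} as a statement about three functors landing in chain complexes, proved in two stages: first verifying that the correspondences \eqref{eq:incl} and \eqref{eq:map} are genuine natural transformations of chain complexes, and then, on the LLC subcategory, proving that the induced maps on homology are isomorphisms by a cosheaf/Mayer--Vietoris argument as flagged in the introduction. For the naturality part, I would fix the chain complex $\mathscr C_\bullet^\L(X)$ as the subcomplex of the measure chain complex $\mathscr C_\bullet(X)$ spanned by measures supported on families of Lipschitz simplices that are ``$\L$-parametrised'' in the sense that the parametrising family is Borel-measurable and has uniformly bounded Lipschitz constant with respect to the new topology of Theorem \ref{thm:top Lip}. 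The map \eqref{eq:incl} is then just the inclusion, and I must check that the boundary operator preserves $\mathscr C_\bullet^\L$ and that a locally Lipschitz $f$ induces a chain map by post-composition; here property (2) of Theorem \ref{thm:top Lip} (continuity of $\phi_\#$) and property (1) (uniform Lipschitz bounds under convergence) are exactly what is needed to see that the pushforward of an $\L$-measure is again an $\L$-measure.

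The more delicate map is \eqref{eq:map}, sending an $\L$-parametrised measure to a normal current by integrating the Lipschitz simplices against the measure. I would define, for a measure $\mu$ on $\Lipb(\triangle^k, X)$ in $\mathscr C_k^\L(X)$, the current $\int \sigma_\# \jump{\triangle^k}\, d\mu(\sigma)$, and verify that the integrand depends measurably on $\sigma$ and that the resulting functional has finite mass and finite boundary mass --- this is precisely where Remarks \ref{rem:not conti} and \ref{rem:not integrable} warn that the new topology, rather than the compact-open topology, is essential: property (1) guarantees the uniform Lipschitz bound that makes the masses $\int \mathbf M(\sigma_\#\jump{\triangle^k})\,d\mu$ finite and the support compact, so the target is really $\N_k^\cpt(X)$. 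Commutation with the boundary operator follows from $\partial(\sigma_\#\jump{\triangle^k}) = \sigma_\#\jump{\partial\triangle^k}$ together with the compatibility of the boundary on $\mathscr C_\bullet^\L$, and naturality under locally Lipschitz $f$ reduces to $f_\#\circ\sigma_\# = (f\circ\sigma)_\#$, again continuous by property (2). The base case $k=0$ is the identification \eqref{eq:correspondence} made rigorous in Lemma \ref{lem:0-th}.

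For the isomorphism statement on the LLC subcategory I would organise the three functors $\mathscr H_\ast,\ \mathscr H_\ast^\L,\ \mathbf H_\ast$ as (pre)cosheaves on each LLC space $X$ via the assignment $O \mapsto H_\ast$ of the relevant complex of chains with support in the open set $O$, and show the two comparison maps are isomorphisms of cosheaves. The strategy is the standard one: verify that all three theories (i) satisfy a Mayer--Vietoris / excision property, so that they are determined by their values on a basis of ``small'' LLC opens, and (ii) agree on such small pieces, where the LLC condition (Definition \ref{def:LLC}) supplies a Lipschitz contraction $h$ that can be used as a chain homotopy to show each complex is acyclic in positive degrees and computes $\mathbb R$ in degree $0$. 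The local Lipschitz contraction must be promoted to a chain homotopy simultaneously on $\mathscr C_\bullet^\L$ and on $\N_\bullet^\cpt$ in a way compatible with the maps \eqref{eq:incl} and \eqref{eq:map}; concretely one forms the prism operator built from $h$ and checks, using property (4) (continuity of the product map $\phi\times\psi$) of Theorem \ref{thm:top Lip}, that it carries $\L$-measures to $\L$-measures and currents to currents with the correct mass bounds.

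The main obstacle I anticipate is not the local acyclicity but the passage from local to global, i.e.\ establishing the cosheaf (Mayer--Vietoris) axiom for the current complex $\N_\bullet^\cpt$ with the compact-support bookkeeping: currents do not subdivide as freely as singular chains, so I would need a partition-of-unity or slicing argument to decompose a normal current of compact support into pieces supported in members of an open cover, controlling mass and boundary mass throughout, and to check that this decomposition is compatible under \eqref{eq:map} with the corresponding (easier) decomposition of $\L$-measures. Getting the uniform Lipschitz control to survive these decompositions is exactly the point at which property (1) of the new topology --- the feature that distinguishes it from the compact-open topology --- is indispensable, and I expect the bulk of the technical work to concentrate there.
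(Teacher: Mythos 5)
Your overall architecture coincides with the paper's: the naturality part is exactly Theorem \ref{thm:Tmu} together with Lemma \ref{lem:conti} (integrating $[\sigma]=\sigma_\#\jump{1_{\T^k}}$ against $\mu$ over a compact determination set $\mathcal K$, with property (1) of Theorem \ref{thm:top Lip} supplying $L=\sup_{\sigma\in\mathcal K}\Lip(\sigma)<\infty$ and hence finite mass, compact support, and the degree-$0$ identification of Lemma \ref{lem:0-th}), and the isomorphism part is the flabby-cosheaf comparison with prism-operator homotopies. However, one step as you state it would fail: the LLC condition does \emph{not} make small open sets acyclic. Definition \ref{def:LLC} only contracts $O'$ inside a possibly larger $O$; the set $O'$ itself can have arbitrary topology, and no basis of acyclic opens exists in general. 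So your step (ii) --- that on small pieces ``each complex is acyclic in positive degrees and computes $\mathbb R$ in degree $0$'' --- is not available. What your prism operator built from $h$ actually proves, and what the paper's Theorems \ref{thm:LT mh} and \ref{thm:LT Lmh} assert, is only that the inclusion-induced map $\tilde{\mathscr H}_k(O')\to\tilde{\mathscr H}_k(O)$ is zero, i.e.\ \emph{local triviality} of the reduced-homology precosheaves. Correspondingly, the ``determined by values on a basis'' heuristic cannot be run as a uniqueness-of-homology argument (the paper explicitly notes that LLC spaces need not have CW homotopy type); the precise tool is Theorem \ref{thm:coincidence}, whose hypothesis (iii) asks exactly for local triviality, combined with flabbiness of the chain-level cosheaves (Corollary \ref{cor:mc is cosheaf}), surjectivity in degree $0$, and injectivity in degree $-1$ for the \emph{augmented} complexes --- the augmentations and their compatibility (Lemma \ref{lem:commute1}) are a bookkeeping item your sketch omits but the comparison theorem requires.

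A second, less serious point is a misplaced emphasis. You predict the bulk of the work lies in the Mayer--Vietoris property for $\N_\bullet^\cpt$ and that property (1) of the new topology is indispensable there. In fact the flabby cosheaf property of the normal current complex is quoted from \cite{Mi} (as is the local triviality of $\mathbf H_k$, Theorem \ref{thm:LT ch}) and rests on slicing normal currents by distance functions --- the topology on $\Lip(\T^k,X)$ plays no role in that decomposition. Property (1) is indispensable only where the paper uses it: the continuity and integrability of $\sigma\mapsto[\sigma](fd\pi)$ (Lemma \ref{lem:conti}, Remarks \ref{rem:not conti} and \ref{rem:not integrable}) and the mass estimates in Theorem \ref{thm:Tmu}. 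On the measure side the cosheaf axiom is the comparatively easy restriction of signed measures to Borel subsets (Lemmas \ref{lem:UV} and \ref{lem:limit}), where no Lipschitz control is needed at all.
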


\vspace{10pt} \noindent {\bf Organization}. 
The organization of this paper is as follows.
In \S \ref{sec:current}, we recall the definition of metric currents in the sense of Ambrosio and Kirchheim and define the current homology of metric spaces. 
In \S \ref{sec:mh}, we recall the definition of the measure homology. 
We also define a chain complex $\mathscr C_\bullet^\L$ mentioned in \S \ref{subsec:presice statement}, that will be called the Lipschitz measure chian complex, of a metric space. 
Using Theorem \ref{thm:top Lip}, we define chain maps from the Lipschitz measure chain complex to the measure and current complicies desired as \eqref{eq:incl} and \eqref{eq:map}. 
In \S \ref{sec:proof:main thm}, we recall the notion of cosheaf and its fundamental property, and a way to show a coincidence of two homologies associated to cosheaves (Theorem \ref{thm:coincidence}).
We prove that the functor taking the measure (resp. Lipschitz measure) chain complex on each open set of a topological (resp. metric) space is a cosheaf.
Verifying that these cosheaves satisfy the assumption of Theorem \ref{thm:coincidence}, we give proofs of Theorem \ref{thm:main thm} and Corollaries \ref{cor:top inv}--\ref{cor:alex}.
In \S \ref{sec:top Lip}, we prove Theorem \ref{thm:top Lip}, that is, we give a reasonable topology on the set of all bounded Lipschitz maps between metric spaces. 

\vspace{10pt}
\noindent
{\bf Acknowledgment}. 
The author expresses his thanks to Professor Takamitsu Yamauchi for discussions about a topology of function spaces that will appear in Remark \ref{rem:Lang top}.
He thanks Yu Kitabeppu for comments in a construction of an isometric embedding of a metric space into a Banach space that will be used in \S \ref{subsec:double dual}.
This work was supported by Research Fellowships of the Japan Society for the Promotion of Science for Young Scientists.

\section{Currents and its homologies} \label{sec:current}

Let us recall the definition of metric currents in the sense of Ambrosio and Kirchheim (\cite{AK}). 

Let $X$ denote a metric space. 
Let $\Lip(X)$ be the set of all Lipschitz functions from $X$ to $\mathbb R$ and $\Lipb(X)$ the subset of $\Lip(X)$ consisting of bounded functions.
For a map $f$ between metric spaces, we denote by $\Lip(f)$ its Lipschitz constant.
Let $k$ denote a nonnegative integer.
The space $\mathcal D^k(X) := \Lipb(X) \times \left(\Lip(X)\right)^k$ is considered as the space of $k$-forms on $X$. 
An element $(f,\pi_1,\dots,\pi_k) \in \mathcal D^k(X)$ is written as $f d \pi_1 \wedge \dots \wedge d \pi_k$ or $f d \pi$ for shortly.

\begin{definition}[\cite{AK}] \label{def:current} \upshape
A $k$-{\it current} in $X$ is a multilinear map 
\[
T : \mathcal D^k(X) \to \mathbb R
\]
such that it satisfies the following three axioms: 
\begin{itemize}
\item[(locality)] 
for $f d \pi \in \mathcal D^k(X)$, we have $T(f d \pi) = 0$ whenever $\pi_i$ is constant on $\{f \neq 0\}$ for some $i$; 
\item[(continuity)] 
if a sequence $\pi^h = (\pi_i^h) \in \left(\Lip(X)\right)^k$, $h \in \mathbb N$, converges to $\pi = (\pi_i)$ pointwise as $h \to \infty$ with $\sup_{i,h} \Lip(\pi_i^h) < \infty$, then we have 
\[
T(f d \pi) = \lim_{h \to \infty} T(f d \pi^h)
\]
for every $f \in \Lipb(X)$; 
\item[(finite mass)]
there is a finite tight Borel measure $\mu$ on $X$ satisfying 
\[
|T(f d \pi)| \le \prod_{i=1}^k \Lip(\pi_i) \int_X |f| \, d \mu
\]
for all $f d \pi \in \mathcal D^k(X)$.
\end{itemize}
\end{definition}
Let $T$ denote a $k$-current in $X$.
The {\it support} of $T$ is defined by the intersection of all supports of $\mu$ satisfying the finite mass axiom for $T$. 
We will deal with only currents of compact support in the paper. 
The {\it boundary} of $T$ is a multi-linear map $\partial T : \mathcal D^{k-1}(X) \to \mathbb R$ defined by 
\[
\partial T(f d \pi) = T(d f \wedge d \pi).
\]
By the locality, $\partial \partial T = 0$ holds.
The boundary $\partial T$ satisfies the continuity and locality.
If $\partial T$ has finite mass, then $T$ is said to be {\it normal}.
The set of all normal $k$-currents in $X$ is denoted by $\N_k(X)$. 
Thus, $\N_\bullet(X)$ becomes a chain complex.
Since supports being compact preserves under the boundary, the space  $\N_\bullet^\cpt(X)$ of all compactly supported normal currents is also a chain complex.
In the present paper, we consider its homology, denoted by 
\[
\mathbf H_\ast(X) := H_\ast(\N_\bullet^\cpt(X)).
\] 
We call it the {\it current homology}.
For the empty-set, we set $\N_\bullet^\cpt(\emptyset) = 0$. 

For another metric space $Y$ with a locally Lipschitz map $\phi : X \to Y$, we have a chain map
\[
\phi_\# : \N_\bullet^\cpt(X) \to \N_\bullet^\cpt(Y)
\]
defined by 
\[
\phi_\# T (f d \pi) = T (f \circ \phi\, d (\pi \circ \phi))
\]
for all $T \in \N_k^\cpt(X)$, $f d \pi \in \mathcal D^k(X)$ and $k \ge 0$.
This is actually defined, since normal currents are compactly supported (see \cite{Mi}).
Thus, the chain complex $\mathbf N_\bullet^\cpt$ is a covariant functor from the category of all metric spaces and all locally Lipschitz maps to the category of all chain complices and all chain maps. 
The current homology $\mathbf H_\ast$ is a covariant functor to the category of all vector spaces and all linear maps.

For a metric space $X$ and its subset $A$, since the inclusion $A \hookrightarrow X$ induces an injective chain map  $\N_\bullet^\cpt(A) \to \N_\bullet^\cpt(X)$, we can regard $\N_\bullet^\cpt(A)$ as a subcomplex of $\N_\bullet^\cpt(X)$.
We set 
\[
\N_\bullet^\cpt(X,A) := \N_\bullet^\cpt(X) / \N_\bullet^\cpt(A).
\]
Its homology is denoted by 
\[
\mathbf H_\ast(X,A)
\]
called the current homology of $(X,A)$.
A map $f : (X,A) \to (Y,B)$ between pairs of metric spaces is said to be locally Lipschitz if so is $f : X \to Y$. 
If $f : (X,A) \to (Y,B)$ is a locally Lipschitz map, then a chain map $f_\# : \N_\bullet^\cpt(X,A) \to \N_\bullet^\cpt(Y,B)$ and a linear map $f_\bullet : \mathbf H_\ast(X,A) \to \mathbf H_\ast(Y,B)$ are induced.

\section{(Lipschitz) Measure homology} \label{sec:mh}
In this section, we recall the definition of measure chain complex of topological spaces. 
We introduce the Lipschitz measure chain complex of metric spaces with chain maps from it to the masure chain complex and the complex of currents mentioned as in \eqref{eq:incl} and \eqref{eq:map}. 

\subsection{Fixing terminology from measure theory}
Before defining the measure homology, let us fix the terminology and notation about measures. 
Let $(T,\mathscr A)$ be a measurable space. 
We say that a function $\mu : \mathscr A \to \mathbb R \cup \{\infty, - \infty\}$ is a signed measure if $\mu(\emptyset) = 0$, 
the image of it does not contains both values $\infty$ and $-\infty$, 
and it is $\sigma$-additive.
A subset $D$ of $T$ which is not necessarily measurable is called a {\it determination set} of a signed measure $\mu$ on $(T,\mathscr A)$ if every measurable set $A \in \mathscr A$ contained in $T - D$ is of zero measure in $\mu$.

Let $S$ be a topological space. 
Let us denote by $\mathscr M_\cpt(S)$ the real vector space of all singed Borel measures on $S$ of finite total variation having a compact determination set. 
For any continuous map $f : S \to S'$ between topological spaces, a linear map $f_\# : \mathscr M_\cpt(S) \to \mathscr M_\cpt(S')$ is given by sending $\mu \in \mathscr M_\cpt(S)$ to the push-forward measure $f_\# \mu = \mu (f^{-1}(\cdot)) \in \mathscr M_\cpt(S')$.
Obviously, if $f : S \to S'$ is a topological embedding, then $f_\#$ is injective.

For a Borel set $A$ of $S$ and a signed Borel measure $\mu$ on $S$, we define a signed Borel measure $\mu \lfloor A$ on $S$ by 
\[
\mu \lfloor A (B) = \mu(A \cap B) 
\]
for every Borel set $B$ of $S$. 
By the definition, it has a determination set $A$. 
Further, we use the same symbol $\mu \lfloor A$ meaning 
the restriction of $\mu$ to the Borel $\sigma$-algebra of $A$, which is a signed Borel measure on $A$.

\subsection{Measure homology} \label{subsec:mh}
In this subsection, let $X$ denote a topological space. 
For $k \ge 0$, we denote by $\T^k$ a regular $k$-simplex. 
Let us denote by $C(\T^k,X)$ the space of all singular $k$-simplices in $X$ with the compact-open topology.
Note that $X$ is Hausdorff if and only if so is $C(\T^k,X)$.
Recall that if $X$ is a metric space, then the compact-open topology on $C(\T^k,X)$ coincides with the topology induced from the uniform distance.

The measure $k$-th chain group of $X$ is defined by 
\[
\mathscr C_k(X) := \mathscr M_\cpt(C(\T^k,X)).
\]
For $i = 0,\dots,k$, the restriction $r_i : C(\T^k,X) \to C(\T^{k-1},X)$ to the $i$-th face of $\T^k$ is continuous in the compact-open topology. 
This induces a linear map $r_i {}_\# : \mathscr C_k(X) \to \mathscr C_{k-1}(X)$ by the push-forward of measures. 
Then, the following map given by 
\[
\partial = \sum_{i=0}^k (-1)^i r_i {}_\# : \mathscr C_k(X) \to \mathscr C_{k-1}(X)
\]
is easily verified to satisfy $\partial \partial = 0$.
So, $(\mathscr C_\bullet(X), \partial)$ becomes a chain complex and is called the {\it measure chain complex}.
Its homology is called the {\it measure homology} and is denoted by 
\[
\mathscr H_\ast(X). 
\]

Let $Y$ denote another topological space with $\phi : X \to Y$ a continuous map. 
Since the composition $\phi_\#: C(\T^k,X) \to C(\T^k,Y); \sigma \mapsto \phi \circ \sigma$ is continuous, 
it induces a chain map $\phi_\# : \mathscr C_\bullet(X) \to \mathscr C_\bullet(Y)$ by the push-forward of measures. 

Let $A$ be a subspace of $X$. 
The inclusion $A \hookrightarrow X$ induces a topological embedding $C(\T^k, A) \to C(\T^k,X)$ for every $k \ge 0$. 
Hence, it induces an injective chain map $\mathscr C_\bullet(A) \to \mathscr C_\bullet(X)$. 
Thus, we regard $\mathscr C_\bullet(A)$ as a subcomplex of $\mathscr C_\bullet(X)$. 
The quotient $\mathscr C_\bullet(X) / \mathscr C_\bullet(A)$ is denoted by
\[
\mathscr C_\bullet(X,A)
\]
called the measure chain complex of $(X,A)$. 
Its homology is denoted by 
\[
\mathscr H_\ast(X,A)
\]
called the measure homology of the pair $(X,A)$.
When $A = \emptyset$, we identify $\mathscr H_\ast(X,\emptyset)$ with $\mathscr H_\ast(X)$.

\subsection{Fundamental proprties of the topology in Theorem \ref{thm:top Lip}} \label{subsec:top Lip}
We give remarks about a topology stated in Theorem \ref{thm:top Lip} and prove fundamental properties of the topology. 

For metric spaces $Z$ and $X$, $C(Z,X)$ denotes the space of all continuous maps from $Z$ to $X$ and $\Lip(Z,X)$ denotes the space of all Lipscihtz maps from $Z$ to $X$. 
When $Z$ is compact, $\Lip(Z,X)=\Lipb(Z,X)$ as sets. 
In this case, we always consider that $\Lip(Z,X)$ has the topology given in Theorem \ref{thm:top Lip}.


\begin{remark} \label{rem:01} \upshape
Among properties of a topology as in Theorem \ref{thm:top Lip}, the property $(1)$ is a crucial difference from the compact-open topology.
Indeed, there is a sequence of real-valued Lipschitz functions on $[0,1]$ converging to a Lipschitz function uniformly such that the Lipschitz constants diverges to infinity.
Hence, any topology satisfying Theorem \ref{thm:top Lip} is strictly finner than the compact-open topology, in general.

Moreover, setting functions $f_t, f_0 \in \Lip([0,1], \mathbb R)$, where $1/2 \ge t > 0$, as 
\[
f_t (x) = \left\{
\begin{aligned}
&0 &&\text{ if } 0 \le x \le t \\
&(x - t) / \sqrt t &&\text{ if } t \le x \le 2 t \\
&\sqrt t &&\text{ if } 2 t \le x \le 1
\end{aligned}
\right.
\]
and $f_0(x) = 0$ everywhere, 
the set $K = \{f_t \mid 0 \le t \le 1/ 2\}$ is compact in the compact-open topology, however it is not compact in any topology satisfying $(0)$ and $(1)$ of Theorem \ref{thm:top Lip}.
\end{remark}

\begin{remark} \label{rem:Lang top} \upshape
In \cite{L}, Lang considered another topology on the space of all compactly supported Lipschitz real-valued functions on a (locally compact) metric space. 
We denote it by $\mathfrak T$ for the moment.
As stated there, one can prove that $\mathfrak T$ satisfies the following property \eqref{eq:Lang}. 
Let $f_j$ and $f$ be compactly supported Lipschitz functions with $j \in \mathbb N$. 
Then, we have 
\begin{equation} \label{eq:Lang}
\text{$f_j \to f$ in $\mathfrak T$} \iff 
\text{$\sup_j \Lip(f_j) < \infty$ and $f_j \to f$ uniformly.}
\end{equation}
In particular, the topology 
$\mathfrak T$ is coarser than any topology satisfying Theorem \ref{thm:top Lip}.
However, the author does not know whether $\mathfrak T$ is metrizable or not.

A topology satisfying Theorem \ref{thm:top Lip} which will be given in \S \ref{sec:top Lip} has a property stronger than $(1)$ of Theorem \ref{thm:top Lip}. 
It actually holds that the function $\Lip(\cdot)$ taking the smallest Lipschitz constant on $\Lipb(Z,X)$ is continuous (Proposition \ref{prop:met1}). 
Hence, even if a sequence $f_j$ converges to $f$ in $\Lipb(Z,X)$ uniformly and $\sup_j \Lip(f_j) < \infty$, 
$f_j$ may diverge in our topology, in general. Indeed, there are functions $f_j : [0,1] \to \mathbb R$ with $\Lip(f_j) = 1$ for $j \in \mathbb N$, such that $f_j$ converges to a constant function uniformly. 

Note that if there exists a topology on the space of all (bounded) Lipschitz maps between metric spaces such that it is metrizable and satisfies the property \eqref{eq:Lang}, then such a topology obviously satisfies the conclusion of Theorem \ref{thm:top Lip}. 
\end{remark}

The following statements are corollaries to Theorem \ref{thm:top Lip}.

\begin{corollary}
If $\phi : X \to Y$ and $\psi : Z \to W$ are bi-Lipschitz homeomorphisms, then 
the map $\Lipb(W,X) \ni f \mapsto \phi \circ f \circ \psi \in \Lipb(Z,Y)$ is homeomorphic.
\end{corollary}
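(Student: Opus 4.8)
The plan is to deduce this corollary directly from the functorial properties (2) and (3) established in Theorem \ref{thm:top Lip}, together with the fact that bi-Lipschitz homeomorphisms are invertible Lipschitz maps whose inverses are again Lipschitz. The map in question, call it $\Phi \colon \Lipb(W,X) \to \Lipb(Z,Y)$ sending $f \mapsto \phi \circ f \circ \psi$, factors as a composition of a pushforward and a pullback.

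First I would observe that $\Phi = \psi^\# \circ \phi_\#$, where $\phi_\# \colon \Lipb(W,X) \to \Lipb(W,Y)$ is the pushforward by $\phi$ (sending $g \mapsto \phi \circ g$) and $\psi^\# \colon \Lipb(W,Y) \to \Lipb(Z,Y)$ is the pullback by $\psi$ (sending $h \mapsto h \circ \psi$). Indeed, for $f \in \Lipb(W,X)$ one has $\psi^\#(\phi_\#(f)) = \psi^\#(\phi \circ f) = (\phi \circ f) \circ \psi = \phi \circ f \circ \psi$, which is exactly $\Phi(f)$. Since $\phi$ and $\psi$ are Lipschitz, properties (2) and (3) of Theorem \ref{thm:top Lip} guarantee that $\phi_\#$ and $\psi^\#$ are each continuous, and hence so is their composition $\Phi$.

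Next I would exhibit a continuous inverse. Because $\phi$ and $\psi$ are bi-Lipschitz homeomorphisms, the inverse maps $\phi^{-1} \colon Y \to X$ and $\psi^{-1} \colon Z \to W$ are also Lipschitz. Applying the same factorization to the map $\Lipb(Z,Y) \ni g \mapsto \phi^{-1} \circ g \circ \psi^{-1} \in \Lipb(W,X)$, properties (2) and (3) again show this map is continuous. A direct computation verifies that these two maps are mutually inverse: composing them returns $\phi \circ \phi^{-1} \circ (\cdot) \circ \psi^{-1} \circ \psi = \mathrm{id}$ on both sides, using associativity of composition. Therefore $\Phi$ is a continuous bijection with continuous inverse, i.e.\ a homeomorphism.

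There is essentially no obstacle here, since the substance has already been absorbed into Theorem \ref{thm:top Lip}; the only point requiring care is the bookkeeping of which Lipschitz map plays the role of $\phi$ versus $\psi$ in the statements of (2) and (3), and checking that boundedness of images is preserved under pre- and post-composition with Lipschitz maps (so that the maps indeed land in the asserted $\Lipb$ spaces). The latter is immediate: post-composing a bounded-image Lipschitz map with the Lipschitz map $\phi$ keeps the image bounded, and pre-composing with $\psi$ does not enlarge the image at all.
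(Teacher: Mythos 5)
Your proposal is correct and follows exactly the paper's route: the paper proves this corollary in one line by appealing to properties (2) and (3) of Theorem \ref{thm:top Lip}, which is precisely your factorization $\Phi = \psi^{\#} \circ \phi_{\#}$ together with the continuous inverse built from $\phi^{-1}$ and $\psi^{-1}$. You have merely written out the details the paper leaves implicit.
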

\begin{proof}
This follows from the properties (2) and (3) in Theorem \ref{thm:top Lip}. 
\end{proof}

\begin{corollary}\label{cor:open emb}
Let $U$ be an open set in a metric space $X$. 
Let $Z$ be a compact metric space. 
Then, the topological embedding $\Lip(Z,U) \to \Lip(Z,X)$ is also an open map.
\end{corollary}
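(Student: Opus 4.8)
The plan is to reduce the openness of the embedding to a statement about the supremum distance, using property (0) of Theorem \ref{thm:top Lip}. By property (2), the inclusion $U \hookrightarrow X$ induces a topological embedding $\iota_\# : \Lip(Z,U) \to \Lip(Z,X)$ (recall that $\Lip = \Lipb$ here because $Z$ is compact, so both sides carry the topology of Theorem \ref{thm:top Lip}). A topological embedding is an open map precisely when its image is open in the target. So I would first observe that it suffices to prove that
\[
W := \{ \sigma \in \Lip(Z,X) \mid \sigma(Z) \subseteq U \}
\]
is open in $\Lip(Z,X)$. Identifying the image of $\iota_\#$ with $W$ is immediate, since the metric on $U$ is the restriction of that on $X$, so a Lipschitz map $Z \to U$ is the same datum as a Lipschitz map $Z \to X$ whose image lies in $U$.

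Next I would show that $W$ is already open in the coarser topology induced by the supremum distance. Fix $\sigma \in W$. If $U = X$ there is nothing to prove, so assume $X \setminus U \neq \emptyset$. Since $Z$ is compact and $\sigma$ is continuous, $\sigma(Z)$ is a compact subset of the open set $U$, hence the distance $\delta := \mathrm{dist}(\sigma(Z), X \setminus U)$ from this compact set to the closed set $X \setminus U$ is strictly positive. For any $\tau \in \Lip(Z,X)$ with $\sup_{z \in Z} d(\sigma(z), \tau(z)) < \delta$ and every $z \in Z$, the triangle inequality gives $\mathrm{dist}(\tau(z), X \setminus U) \ge \delta - d(\sigma(z), \tau(z)) > 0$, so $\tau(z) \in U$. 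Thus the open $\delta$-ball about $\sigma$ in the supremum distance is contained in $W$, which proves that $W$ is open in the supremum topology.

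Finally, by property (0) of Theorem \ref{thm:top Lip}, the topology on $\Lip(Z,X)$ is finer than the one induced by the supremum distance, so $W$ remains open in it. Hence $\iota_\#(\Lip(Z,U)) = W$ is open in $\Lip(Z,X)$; and since $\iota_\#$ is a homeomorphism onto its image by (2), it carries open subsets of $\Lip(Z,U)$ to open subsets of the open set $W$, which are therefore open in $\Lip(Z,X)$. This shows that $\iota_\#$ is an open map.

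I do not expect a genuine obstacle here: the whole argument hinges on the single observation that the image of the embedding is cut out by a condition that is already open for the supremum distance, and property (0) then transfers this openness to the finer topology. The only points requiring slight care are the degenerate case $U = X$ and the verification that the image of $\iota_\#$ is exactly $W$.
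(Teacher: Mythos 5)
Your proof is correct and follows essentially the same route as the paper: both reduce the statement to the openness of the image set $\{\sigma \in \Lip(Z,X) \mid \im\,\sigma \subset U\}$ in the coarser uniform topology (the paper cites this as the standard fact that this set is open in the compact-open topology on $C(Z,X)$, which coincides with the supremum-distance topology since $Z$ is compact) and then invoke property $(0)$ of Theorem \ref{thm:top Lip} to transfer openness to the finer topology. Your explicit $\delta$-ball argument and the closing observation that an embedding with open image is an open map merely spell out steps the paper leaves implicit.
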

\begin{proof}
Note that $\{f \in C(Z,X) \mid \im\, f \subset U\}$ is open in $C(Z,X)$ with respect to the compact-open topology.
Since the topology on $\Lip(Z,X)$ is finner than the compact-open topology due to $(0)$ of Theorem \ref{thm:top Lip}, a set $\{f \in \Lip(Z,X) \mid \im\, f \subset U\}$ is open in $\Lip(Z,X)$, which is the image of the map $\Lip(Z,U) \to \Lip(Z,X)$.
This completes the proof. 
\end{proof}

\begin{corollary} \label{cor:cpt}
Let $Z$ be a compact metric space and $X$ an arbitrary metric space. 
If a subset $\mathcal K$ of $\Lip(Z,X)$ is compact, then the image set $\im\, \mathcal K = \bigcup_{f \in \mathcal K} \im\, f$ is compact.
\end{corollary}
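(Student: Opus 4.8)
The plan is to reduce the statement to the continuity of the evaluation map and then invoke compactness of a product. Concretely, I would introduce the evaluation map
\[
\mathrm{ev} : \Lip(Z,X) \times Z \to X, \qquad \mathrm{ev}(f,z) = f(z),
\]
and aim to show it is continuous; once this is done, the conclusion is immediate, since $\mathcal K$ and $Z$ are both compact, so $\mathcal K \times Z$ is compact, and therefore its continuous image $\mathrm{ev}(\mathcal K \times Z) = \bigcup_{f \in \mathcal K} \im f = \im\, \mathcal K$ is compact.

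To prove continuity of $\mathrm{ev}$, I would first observe that by property $(0)$ of Theorem \ref{thm:top Lip} the space $\Lip(Z,X) = \Lipb(Z,X)$ is metrizable, and $Z$ is metrizable since it is a metric space; hence the product $\Lip(Z,X) \times Z$ is metrizable and it suffices to verify sequential continuity. So I would take a convergent sequence $(f_j, z_j) \to (f,z)$ and estimate $d(f_j(z_j), f(z))$. Here property $(0)$ again enters, telling us that the topology is finer than the supremum distance, so $f_j \to f$ uniformly, while property $(1)$ gives the finite bound $L := \sup_j \Lip(f_j) < \infty$. The triangle inequality then yields
\[
d\bigl(f_j(z_j), f(z)\bigr) \le \Lip(f_j)\, d(z_j, z) + \sup_{w \in Z} d\bigl(f_j(w), f(w)\bigr) \le L\, d(z_j, z) + \sup_{w \in Z} d\bigl(f_j(w), f(w)\bigr),
\]
and both summands tend to $0$ as $j \to \infty$ (the first because $z_j \to z$ and the Lipschitz constants are uniformly bounded, the second by uniform convergence). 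Thus $f_j(z_j) \to f(z)$, establishing continuity.

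The main obstacle, and the only place where the special topology of Theorem \ref{thm:top Lip} is really needed rather than the mere compact-open topology, is controlling the term $d(f_j(z_j), f_j(z))$: this requires a uniform bound on the Lipschitz constants $\Lip(f_j)$, which is exactly what property $(1)$ supplies. As Remark \ref{rem:01} indicates, under the compact-open topology alone a convergent sequence may have $\Lip(f_j) \to \infty$, so the estimate above would fail; the finer topology is precisely what rescues the argument. The remaining ingredients—metrizability of the product and the fact that a continuous image of a compact set is compact—are routine.
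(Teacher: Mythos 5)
Your proof is correct, and it shares the paper's skeleton: both arguments write $\im\,\mathcal K$ as the continuous image of the compact product of $\mathcal K$ and $Z$ under the evaluation map. Where you differ is in how continuity of evaluation is obtained. The paper does not reprove it: it recalls the standard fact that $e : Z \times C(Z,X) \to X$ is continuous when $C(Z,X)$ carries the compact-open topology (valid since $Z$ is compact, hence locally compact), observes via property $(0)$ of Theorem \ref{thm:top Lip} that $\mathcal K$, compact in the finer topology of $\Lip(Z,X)$, remains compact in $C(Z,X)$, and concludes in three lines. In particular the paper uses \emph{only} property $(0)$; property $(1)$ plays no role. This exposes the one real flaw in your write-up, which is not in the proof chain but in your closing commentary: the claim that the uniform bound $\sup_j \Lip(f_j) < \infty$ is ``exactly what rescues the argument'' and that the compact-open topology alone would fail is mistaken. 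It is an artifact of your choice of splitting in the triangle inequality; estimating instead
\[
d\bigl(f_j(z_j), f(z)\bigr) \le \sup_{w \in Z} d\bigl(f_j(w), f(w)\bigr) + d\bigl(f(z_j), f(z)\bigr),
\]
the first term tends to $0$ by uniform convergence and the second by continuity of the single limit map $f$, with no bound on the $\Lip(f_j)$ needed. So Corollary \ref{cor:cpt} is genuinely a compact-open-topology fact; the finer topology enters only through the trivial remark that compactness persists under a coarser topology. The places where property $(1)$ is truly indispensable are elsewhere, e.g.\ Lemma \ref{lem:conti} and the pathologies of Remarks \ref{rem:not conti} and \ref{rem:not integrable}, not here (Remark \ref{rem:01} concerns the divergence of Lipschitz constants, which is harmless for this corollary). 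What your route buys is self-containedness — you never invoke the standard evaluation-continuity fact — at the cost of an inessential hypothesis and a misleading moral.
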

\begin{proof}
Recall that the evaluation map 
\[
e : Z \times C(Z,X) \ni (z,f) \mapsto f(z) \in X
\]
is continuous in the compact-open topology. 
Let $\mathcal K \subset \Lip(Z,X)$ be a compact set. 
It is also compact in $C(Z,X)$. 
Hence, the set $\im\, \mathcal K = e(Z \times \mathcal K)$ is compact. 
\end{proof}

\begin{corollary} \label{cor:product}
Let $Z$, $X$ and $Y$ be metric spaces. 
Then, the canonical map 
\[
\Lipb(Z,X) \times \Lipb(Z,Y) \to \Lipb(Z,X \times Y)
\] 
is homeomorphic.
\end{corollary}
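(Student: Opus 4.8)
The plan is to realize the canonical map as a composition of maps already known to be continuous from Theorem~\ref{thm:top Lip}, and to do the same for its inverse; the whole argument is then a formal consequence of properties (2), (3) and (4). Throughout, write $\langle f,g\rangle \in \Lipb(Z,X\times Y)$ for the image of $(f,g)$, so that $\langle f,g\rangle(z)=(f(z),g(z))$, and let $p_X:X\times Y\to X$ and $p_Y:X\times Y\to Y$ denote the coordinate projections, which are Lipschitz for the product metric.

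First I would settle bijectivity. Given $h\in\Lipb(Z,X\times Y)$, the maps $p_X\circ h$ and $p_Y\circ h$ are bounded and Lipschitz (composition of a Lipschitz projection with a bounded Lipschitz map), so the assignment $h\mapsto(p_X\circ h,\,p_Y\circ h)$ lands in $\Lipb(Z,X)\times\Lipb(Z,Y)$ and is a two-sided inverse to $(f,g)\mapsto\langle f,g\rangle$. Hence the canonical map is a bijection. Next I would prove continuity of the forward map. Let $\Delta:Z\to Z\times Z$, $\Delta(z)=(z,z)$, be the diagonal, which is Lipschitz for the product metric on $Z\times Z$. For any $(f,g)$ the map $f\times g$ of property (4) satisfies $(f\times g)\circ\Delta=\langle f,g\rangle$, so the canonical map factors as
\[
\Lipb(Z,X)\times\Lipb(Z,Y)\xrightarrow{\ \text{(4)}\ }\Lipb(Z\times Z,\,X\times Y)\xrightarrow{\ \Delta^{\#}\ }\Lipb(Z,\,X\times Y),
\]
where the first arrow is the map in property (4) and the second is $\Delta^{\#}$, continuous by property (3). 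Being a composition of continuous maps, the canonical map is continuous.

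Finally I would check continuity of the inverse. It sends $h$ to $\big((p_X)_{\#}h,\,(p_Y)_{\#}h\big)$, and by property (2) each of the push-forwards $(p_X)_{\#}$ and $(p_Y)_{\#}$ is continuous; by the universal property of the product topology the combined map into $\Lipb(Z,X)\times\Lipb(Z,Y)$ is then continuous. Together with bijectivity, this shows the canonical map is a homeomorphism. I do not expect a genuine obstacle here: the only points requiring care are the metric-space bookkeeping, namely verifying that $\Delta$ and the projections $p_X,p_Y$ are Lipschitz for the chosen product metric and that boundedness of images is preserved, all of which are routine.
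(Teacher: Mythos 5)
Your proof is correct and is essentially the paper's argument spelled out in full: the paper's proof of Corollary~\ref{cor:product} consists of the single line ``this follows from properties $(2)$, $(3)$ and $(4)$ of Theorem~\ref{thm:top Lip},'' and your factorization of the forward map through property $(4)$ followed by $\Delta^{\#}$ (property $(3)$), with the inverse handled by $(p_X)_{\#}$ and $(p_Y)_{\#}$ via property $(2)$, is exactly the intended instantiation. No gaps; the Lipschitz checks for $\Delta$, $p_X$, $p_Y$ and the boundedness bookkeeping are routine, as you say.
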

\begin{proof}
This follows from the properties $(2)$, $(3)$ and $(4)$ of Theorem \ref{thm:top Lip}.
\end{proof}

\begin{proposition} \label{prop:Z cpt} 
Let $Z$ be a compact metric space and $f : X \to Y$ a locally Lipschitz map between metric spaces. 
Then, the map $f_\# : \Lip(Z,X) \to \Lip(Z,Y)$ defined by $g \mapsto f \circ g$ is continuous on each compact set.
\end{proposition}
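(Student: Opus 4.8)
The plan is to reduce the statement to the globally Lipschitz case, in which property (2) of Theorem \ref{thm:top Lip} applies verbatim. The whole reason the conclusion is asserted only on each compact set, rather than globally, is that $f$ is assumed merely locally Lipschitz; the key observation is that once we restrict attention to a compact subset $\mathcal K \subset \Lip(Z,X)$, every map in $\mathcal K$ takes values in a single compact subset of $X$ on which $f$ is genuinely Lipschitz.

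First I would fix a compact set $\mathcal K \subset \Lip(Z,X)$ and put $C := \im\, \mathcal K = \bigcup_{g \in \mathcal K} \im\, g$. Since $Z$ is compact, Corollary \ref{cor:cpt} guarantees that $C$ is a compact subset of $X$. Next I would check the standard fact that a locally Lipschitz map is Lipschitz on a compact set: covering $C$ by finitely many open sets on each of which $f$ is Lipschitz, choosing a Lebesgue number $\delta$ for this cover, and using that $f(C)$ is bounded (being compact), one produces a single constant $L$ with $d_Y(f(p),f(q)) \le L\, d_X(p,q)$ for all $p,q \in C$. Thus $f|_C : C \to Y$ is Lipschitz.

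With this in hand, I would factor $f_\#$ over $\mathcal K$ through a restriction of the codomain. Let $\iota : C \hookrightarrow X$ be the inclusion. By property (2) of Theorem \ref{thm:top Lip}, $\iota_\# : \Lip(Z,C) \to \Lip(Z,X)$ is a topological embedding, and since every $g \in \mathcal K$ satisfies $\im\, g \subset C$, the set $\mathcal K$ lies in the image of $\iota_\#$. Hence $\widetilde{\mathcal K} := \iota_\#^{-1}(\mathcal K)$ is compact in $\Lip(Z,C)$ and the map $\iota_\#^{-1}|_{\mathcal K} : \mathcal K \to \Lip(Z,C)$ is continuous. On the other hand, because $f|_C : C \to Y$ is Lipschitz, property (2) applied to $f|_C$ shows that $(f|_C)_\# : \Lip(Z,C) \to \Lip(Z,Y)$ is continuous. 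For each $g \in \mathcal K$ one has $(f|_C)_\#(\iota_\#^{-1}(g)) = (f|_C) \circ (\iota_\#^{-1}(g)) = f \circ g = f_\#(g)$, so that $f_\#|_{\mathcal K} = (f|_C)_\# \circ (\iota_\#^{-1}|_{\mathcal K})$ is a composition of continuous maps, hence continuous. As $\mathcal K$ was arbitrary, this would complete the proof.

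The only genuine obstacle is the passage from the local to the global Lipschitz condition on $f$; this is precisely what forces the conclusion to be local (continuity on each compact set), and it is dissolved by combining the compactness of $\im\, \mathcal K$ from Corollary \ref{cor:cpt} with the Lebesgue-number argument above. Everything else is the formal bookkeeping of the functorial properties in (2) of Theorem \ref{thm:top Lip}, together with metrizability from (0) should one prefer to verify continuity sequentially rather than through the embedding.
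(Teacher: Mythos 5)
Your proposal is correct and follows essentially the same route as the paper: both restrict to a compact $\mathcal K$, use Corollary \ref{cor:cpt} to get the compact image set, corestrict to it via the topological embedding $\iota_\#$ from Theorem \ref{thm:top Lip} (2), and then apply (2) to the now genuinely Lipschitz map $f|_C$. The only difference is cosmetic: you spell out the Lebesgue-number argument showing a locally Lipschitz map is Lipschitz on a compact set, a fact the paper simply asserts.
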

Note that, in Proposition \ref{prop:Z cpt}, we deal with a {\it locally} Lipschitz map $f : X \to Y$.
Hence, the proposition does not follow from the property (2) of Theorem \ref{thm:top Lip} directly. 
\begin{proof} 
Let $\mathcal K \subset \Lip(Z,X)$ be a compact set.
Let us set $X_0 := \bigcup_{g \in \mathcal K} \im\, g$.
It is compact due to Corollary \ref{cor:cpt}.
By Theorem \ref{thm:top Lip} (2), the inclusion $\iota : X_0 \hookrightarrow X$ induces a topological embedding $\iota_\# : \Lip(Z,X_0) \to \Lip(Z,X)$. 
For each $g \in \mathcal K$, we define a map $g_0 : Z \to X_0$ by $g_0(z) = g(z)$ for $z \in Z$.
Let us set $\mathcal K_0 = \{g_0 \in \Lip(Z,X_0) \mid g \in \mathcal K\}$. 
Then, $\iota_\# : \mathcal K_0 \to \mathcal K$ is bijective. 
Since $\iota_\#$ is a topological embedding, $\mathcal K_0$ is compact. 
Thus, we have the following commutative diagram 
\[
\xymatrix{
&\mathcal K_0 \ar@{^{(}->}[r] \ar[d]^{\iota_\#} &\Lip(Z,X_0) \ar[d]^{\iota_\#} \ar[dr]^{(f|_{X_0})_\#} & \\
&\mathcal K \ar@{^{(}->}[r] &\Lip(Z,X) \ar[r]_{f_\#} &\Lip(Z,Y).
}
\]
Since a locally Lipschitz map restricted to a compact set is Lipschitz, 
$(f |_{X_0})_\#$ is continuous by (2) of Theorem \ref{thm:top Lip}.
Let us take an open set $O$ in $\Lip(Z,Y)$.
Then, the set
\[
\iota_\#^{-1}((f_\# |_{\mathcal K})^{-1}(O)) = \mathcal K_0 \cap (f|_{X_0})_\#^{-1}(O)
\]
is open in $\mathcal K_0$.
Since $\iota_\# |_{\mathcal K_0}$ is a homeomorphism, the map 
\[
f_\# |_{\mathcal K} : \mathcal K \to \Lip(Z,Y)
\]
is continuous.
This completes the proof.
\end{proof}

When a target is a normed abelian group, we obtain

\begin{proposition} \label{prop:TG}
Let $V$ be a normed abelian group and $Z$ a metric space. 
Then, $\Lipb(Z,V)$ is a topological abelian group. 
\end{proposition}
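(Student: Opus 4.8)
The plan is to show that the pointwise group structure on $\Lipb(Z,V)$ is continuous by factoring the group operations through the functoriality and product statements already established in Theorem \ref{thm:top Lip} and Corollary \ref{cor:product}.

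First I would record the algebraic structure. Writing $V$ additively with norm $\|\cdot\|$ and metric $d(v,w)=\|v-w\|$, the set $\Lipb(Z,V)$ is an abelian group under the pointwise operations $(f+g)(z)=f(z)+g(z)$ and $(-f)(z)=-f(z)$: a pointwise sum of bounded maps is bounded and $\Lip(f+g)\le\Lip(f)+\Lip(g)$, so $f+g\in\Lipb(Z,V)$, while $\Lip(-f)=\Lip(f)$ gives $-f\in\Lipb(Z,V)$; commutativity and associativity are inherited from $V$, and the constant map $0$ is the identity. It therefore remains only to verify that this group is a topological group.

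The key observation is that the subtraction map $s:V\times V\to V$, $s(v,w)=v-w$, is Lipschitz. Indeed, equipping $V\times V$ with the metric $d\big((v,w),(v',w')\big)=\|v-v'\|+\|w-w'\|$, to which Theorem \ref{thm:top Lip} applies, the triangle inequality together with the symmetry $\|{-v}\|=\|v\|$ of the norm give $\|s(v,w)-s(v',w')\|=\|(v-v')-(w-w')\|\le\|v-v'\|+\|w-w'\|$, so $\Lip(s)\le 1$. I would then use the standard criterion that an abelian group carrying a topology is a topological group as soon as the map $(f,g)\mapsto f-g$ is continuous, and realize this map as the composition
\[
\Lipb(Z,V)\times\Lipb(Z,V)\xrightarrow{\ \cong\ }\Lipb(Z,V\times V)\xrightarrow{\ s_\#\ }\Lipb(Z,V),
\]
where the first arrow is the homeomorphism of Corollary \ref{cor:product} sending $(f,g)$ to the map $z\mapsto(f(z),g(z))$, and the second is continuous by Theorem \ref{thm:top Lip}(2) applied to the Lipschitz map $s$. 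Since $s_\#$ sends $z\mapsto(f(z),g(z))$ to $z\mapsto f(z)-g(z)=(f-g)(z)$, the composition is exactly $(f,g)\mapsto f-g$, and being a composition of continuous maps it is continuous.

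There is essentially no obstacle beyond this factorization: once subtraction on $V$ is seen to be Lipschitz, everything follows from the transfer of continuity recorded in Theorem \ref{thm:top Lip}(2) and the product homeomorphism of Corollary \ref{cor:product}. The only point deserving care is the bookkeeping, namely checking that the two arrows in the display compose to the genuine pointwise subtraction, which is immediate from the explicit descriptions of those maps. If one prefers to verify the two axioms separately, the same argument shows that $f\mapsto -f$ equals $n_\#$ for the isometry $n:V\to V$, $n(v)=-v$, hence is continuous by Theorem \ref{thm:top Lip}(2), and that addition is the composition of Corollary \ref{cor:product} with $a_\#$ for the Lipschitz map $a(v,w)=v+w$.
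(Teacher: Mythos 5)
Your proof is correct and follows essentially the same route as the paper: the paper likewise shows that the Lipschitz maps $+:V\times V\to V$ and $v\mapsto -v$ induce continuous maps via Theorem \ref{thm:top Lip}(2) combined with the product homeomorphism of Corollary \ref{cor:product}. Your packaging of both operations into the single Lipschitz subtraction map $s(v,w)=v-w$ is only a cosmetic variant, and you even note the paper's two-step version at the end.
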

\begin{proof}
Since the addition $+ : V \times V \to V$ is Lipschitz, the induced addition operator 
\[
+_\# : \Lipb(Z, V \times V) \to \Lipb(Z,V)
\]
is continuous by Theorem \ref{thm:top Lip} (2). 
By Corollary \ref{cor:product}, the addition operator
\[
+ : \Lipb(Z,V) \times \Lipb(Z,V) \to \Lipb(Z,V)
\] 
is continuous. 

Since $V \ni v \mapsto - v \in V$ is Lipschitz, the induced map 
\[
\Lipb(Z,V) \ni f \mapsto - f \in \Lipb(Z,V)
\]
is continuous. 
This completes the proof.
\end{proof}


\subsection{Lipschitz measure homology} \label{subsec:Lmh}
By a similar way to define the measure chain complex, the {\it Lipschitz measure chain complex} $\mathscr C_\bullet^\L(X)$ of a metric space $X$ is defined as follows. 
Let  
\[
\mathscr C_k^\L (X) := \mathscr M_\cpt(\Lip(\T^k,X))
\] 
for $k \ge 0$.
The restriction $r_i : \Lip(\T^k,X) \to \Lip(\T^{k-1},X)$ to the $i$-th face is continuous for all $i=0,\dots,k$, due to (3) in Theorem \ref{thm:top Lip}. 
Hence, the boundary $\partial : \mathscr C_k^\L(X) \to \mathscr C_{k-1}^\L(X)$ is defined by the same formula as the usual boundary operator $\partial : \mathscr C_k(X) \to \mathscr C_{k-1}(X)$.
The {\it Lipschitz measure homology} of $X$ is defined by 
\[
\mathscr H_\ast^\L (X) := H_\ast(\mathscr C_\bullet^\L(X)).
\]

Since the inclusion $\Lip(\T^k,X) \hookrightarrow C(\T^k,X)$ is continuous due to $(0)$ of Theorem \ref{thm:top Lip}, any measure $\mu \in \mathscr M_\cpt(\Lip(\T^k,X))$ can be regard as a measure in $\mathscr M_\cpt(C(\T^k,X))$ by push-forward. 
This induces a chain map 
\begin{equation} \label{eq:natural}
\mathscr C_\bullet^\L(X) \to \mathscr C_\bullet(X)
\end{equation}
which is no other than \eqref{eq:incl} in \S \ref{subsec:presice statement}.

Let $Y$ be another metric space. 
For a Lipschitz map $\phi : X \to Y$, the composition $\phi_\# : \Lip(\T^k,X) \to \Lip(\T^k,Y)$ is continuous, due to $(2)$ of Theorem \ref{thm:top Lip}. 
Then, we can define the push-forward 
\[
\phi_\#{}_\# : \mathscr C_k^\L(X) \to \mathscr C_k^\L(Y)
\]
of measures by the continuous map $\phi_\#$. 
The map $\phi_\#{}_\#$ will be written by $\phi_\#$ for shortly. 
It is actually a chain map, due to the definition of the boundary. 

Further, for a {\it locally} Lipschitz map $\phi : X \to Y$, we can define the push-forward $\phi_\# : \mathscr C_\bullet^\L(X) \to \mathscr C_\bullet^\L(Y)$ as follows. 
Let $\mathcal K \subset \Lip(\T^k,X)$ be a compact determination set of $\mu \in \mathscr C_k^\L(X)$. 
The restriction of the composition $\phi_\# : \Lip(\T^k,X) \to \Lip(\T^k,Y)$; $\sigma \mapsto \phi \circ \sigma$ to $\mathcal K$ is continuous, due to Proposition \ref{prop:Z cpt}.
Since the measure $\mu$ is essentially defined on $\mathcal K$, the push-forward of $\mu \lfloor \mathcal K$ under the continuous map $\phi_\# |_{\mathcal K}$ is defined. 
We denote it by $\phi_\# \mu$. 
This construction is actually well-defined due to the following

\begin{lemma} \label{lem:conti on cpt} 
Let $\phi : X \to Y$ be a locally Lipschitz map between metric spaces. 
Then, the above construction of the push-forward $\mathscr C_k^\L(X) \ni \mu \mapsto \phi_\# \mu \in \mathscr C_k^\L(Y)$ does not depend on the choice of a compact determination set $\mathcal K$ of $\mu$. 
Further, $\phi_\# : \mathscr C_\bullet^\L(X) \to \mathscr C_\bullet^\L(Y)$ is a chain map.
\end{lemma}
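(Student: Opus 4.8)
The plan is to first record a few elementary facts about determination sets, then establish independence of the chosen compact determination set by comparing two choices with their intersection, and finally verify the chain map property from the pointwise identity that restriction to a face commutes with post-composition by $\phi$. Throughout, write $\Phi_k$ for the map $\sigma \mapsto \phi \circ \sigma$ on $\Lip(\T^k, \cdot)$, which is continuous only on compact sets (Proposition \ref{prop:Z cpt}) since $\phi$ is merely locally Lipschitz, whereas each face operator $r_i$ is globally continuous by Theorem \ref{thm:top Lip} (3).

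First I would note the following. (i) Any superset of a determination set is again a determination set, and for a Borel determination set $\mathcal K$ one has $\mu = \mu \lfloor \mathcal K$, since for every Borel $B$ we have $\mu(B) = \mu(B \cap \mathcal K) + \mu(B \setminus \mathcal K) = \mu(B \cap \mathcal K)$. (ii) If $\mathcal K_1, \mathcal K_2$ are compact determination sets of $\mu \in \mathscr C_k^\L(X)$, then so is $\mathcal K_1 \cap \mathcal K_2$: it is compact because $\Lip(\T^k,X)$ is Hausdorff (its topology is finer than the compact-open topology, so compact sets are closed), and for measurable $A \subset \Lip(\T^k,X) \setminus (\mathcal K_1 \cap \mathcal K_2)$ one writes $A = (A \setminus \mathcal K_1) \sqcup (A \cap \mathcal K_1)$ with $A \setminus \mathcal K_1 \subset \Lip(\T^k,X) \setminus \mathcal K_1$ and $A \cap \mathcal K_1 \subset \Lip(\T^k,X) \setminus \mathcal K_2$, whence $\mu(A) = 0$. (iii) For a compact determination set $\mathcal K$, the restriction $\Phi_k|_{\mathcal K}$ is continuous, hence Borel; the push-forward does not increase total variation; and $\Phi_k(\mathcal K)$ is a compact determination set of $\phi_\# \mu$, so indeed $\phi_\# \mu \in \mathscr C_k^\L(Y)$.

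For independence of the choice of $\mathcal K$, given two compact determination sets $\mathcal K_1, \mathcal K_2$ I would compare each resulting push-forward with the one built from $\mathcal K := \mathcal K_1 \cap \mathcal K_2$, so by (ii) it suffices to treat two nested compact determination sets $\mathcal K' \subset \mathcal K$. Using (i) we have $\mu \lfloor \mathcal K = \mu \lfloor \mathcal K' = \mu$, and for Borel $B \subset \Lip(\T^k,Y)$ one checks $(\Phi_k|_{\mathcal K})^{-1}(B) \cap \mathcal K' = (\Phi_k|_{\mathcal K'})^{-1}(B)$ while $(\Phi_k|_{\mathcal K})^{-1}(B) \setminus \mathcal K' \subset \Lip(\T^k,X) \setminus \mathcal K'$; evaluating $\mu$ and using that $\mathcal K'$ is a determination set shows that the two push-forwards assign the same value to $B$. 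Hence $\phi_\# \mu$ is well defined, and linearity of $\mu \mapsto \phi_\# \mu$ then follows by choosing for $\mu, \nu$ the common compact determination set $\mathcal K_\mu \cup \mathcal K_\nu$ (again one by (i)) together with linearity of push-forward. For the chain map property it suffices to prove $(r_i)_\# \circ \phi_\# = \phi_\# \circ (r_i)_\#$ for each $i$, after which summing with signs $(-1)^i$ gives $\partial \circ \phi_\# = \phi_\# \circ \partial$. The geometric input is $r_i(\phi \circ \sigma) = \phi \circ r_i(\sigma)$, i.e. $r_i \circ \Phi_k = \Phi_{k-1} \circ r_i$ on maps. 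Fixing $\mu$ with compact determination set $\mathcal K$ and routing all push-forwards through $\mathcal K$ and through $r_i(\mathcal K)$, I would apply functoriality of push-forward under composition of Borel maps; both sides then reduce to the push-forward of $\mu \lfloor \mathcal K$ under $(r_i \circ \Phi_k)|_{\mathcal K} = (\Phi_{k-1} \circ r_i)|_{\mathcal K}$, which are equal.

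The main obstacle I anticipate is the bookkeeping of compact determination sets when composing the genuinely global push-forward $(r_i)_\#$ with the only locally defined push-forward $\phi_\#$: one must verify that the compact determination sets match up along the composition so that functoriality of push-forward legitimately applies and the two orders of operations land on the same restricted map.
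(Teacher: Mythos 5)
Your proof is correct and takes essentially the same route as the paper: both reduce independence to a pair of nested compact determination sets and then compare the two push-forwards on an arbitrary Borel set of $\Lip(\T^k,Y)$ via the determination-set property, the only (cosmetic) difference being that the paper nests through the union $\mathcal K_1 \cup \mathcal K_2$ (a determination set for free, since supersets of determination sets are determination sets), while you nest through the intersection at the cost of your small extra argument (ii). Your explicit verification of $(r_i)_\# \circ \phi_\# = \phi_\# \circ (r_i)_\#$, routing both sides through $\mathcal K$ and $r_i(\mathcal K)$ and using $(r_i \circ \Phi_k)|_{\mathcal K} = (\Phi_{k-1} \circ r_i)|_{\mathcal K}$, correctly fills in what the paper leaves as ``easily verified.''
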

\begin{proof}
Let $\mathcal K_1$ and $\mathcal K_2$ be two compact determination sets of $\mu$ in $\Lip(\T^k,X)$.
Then, $\mathcal K_1 \cup \mathcal K_2$ is also a compact determination set. 
Hence, we may assume that $\mathcal K_1 \subset \mathcal K_2$.
Let us consider the push-forwards $\nu_i := (\phi_\# |_{\mathcal K_i})_\# (\mu \lfloor \mathcal K_i)$ of $\mu \lfloor \mathcal K_i \in \mathscr M_\cpt(\mathcal K_i)$ by the continuous maps $\phi_\# |_{\mathcal K_i} : \mathcal K_i \to \Lip(\T^k,Y)$ for $i=1,2$. 
Then, $\nu_i \in \mathscr M_\cpt(\phi_\#(\mathcal K_i))$.
Further, we regard them as signed Borel measures on $\Lip(\T^k,Y)$.
Let $\mathcal A$ be a Borel set in $\Lip(\T^k,Y)$.
Then, we obtain 
\[
\nu_2(\mathcal A) = \mu((\phi_\# |_{\mathcal K_2})^{-1} (\mathcal A)) 
= \mu (\phi_\#^{-1}(\mathcal A) \cap \mathcal K_2).
\]
Since $\mathcal K_1$ is a determination set of $\mu$, we have 
\begin{align*}
\mu (\phi_\#^{-1}(\mathcal A) \cap \mathcal K_2) 
&= \mu (\phi_\#^{-1}(\mathcal A) \cap \mathcal K_2 \cap \mathcal K_1) + \mu (\phi_\#^{-1}(\mathcal A) \cap \mathcal K_2 - \mathcal K_1) \\
&= \mu (\phi_\#^{-1}(\mathcal A) \cap \mathcal K_1) \\
&= \nu_1(\mathcal A).
\end{align*}
Therefore, $\nu_1 = \nu_2$ as the signed Borel measures on $\Lip(\T^k, Y)$, which are denoted by $\phi_\# \mu$.
By the construction, $\phi_\# \mu$ is of finite total variation and has a compact determination set.

It is easily verified that the map $\phi_\# : \mathscr C_\bullet^\L(X) \to \mathscr C_\bullet^\L(Y)$ is a chain map. 
This completes the proof.
\end{proof}

The map \eqref{eq:natural} is natural, i.e., 
$\phi_\# \circ (\mathscr C_\bullet^\L (X) \to \mathscr C_\bullet(X)) = (\mathscr C_\bullet^\L (Y) \to \mathscr C_\bullet(Y)) \circ \phi_\#$ holds, for every locally Lipschitz map $\phi : X \to Y$.
Hence, it induces a natural transformation 
\begin{equation*} 
\mathscr H_\ast^\L \to \mathscr H_\ast
\end{equation*}
on the category of all metric spaces and all locally Lipschitz maps.

For a pair $(X,A)$ of metric spaces, 
the inclusion $A \hookrightarrow X$ induces a topological embedding $\Lip(\T^k,A) \to \Lip(\T^k,X)$ for every $k \ge 0$, due to $(2)$ of Theorem \ref{thm:top Lip}. 
So, it induces an injective chain map 
\[
\mathscr C_\bullet^\L (A) \to \mathscr C_\bullet^\L(X).
\]
Thus, $\mathscr C_\bullet^\L(A)$ is regarded as a subcomplex of $\mathscr C_\bullet^\L(X)$. 
We set 
\[
\mathscr C_\bullet^\L(X,A) = \mathscr C_\bullet^\L(X) / \mathscr C_\bullet^\L(A)
\]
and call it the Lipschitz measure complex of $(X,A)$. 
Its homology is denoted by
\[
\mathscr H_\ast^\L (X,A)
\]
called the Lipschitz measure homology of $(X,A)$. 
There are also natural maps 
\[
\mathscr C_\bullet^\L (X,A) \to \mathscr C_\bullet (X,A)
\]
and 
\[
\mathscr H_\ast^\L (X,A) \to \mathscr H_\ast (X,A)
\]
on the category of all pairs of metric spaces and all locally Lipschitz maps.

\begin{remark} \upshape
By $(5)$ of Theorem \ref{thm:top Lip}, there are canonical identifications $C(\T^0,X) = \Lip(\T^0,X) = X$ for any metric space $X$. 
Hence, we can identify $\mathscr C_0(X)$ and $\mathscr C_0^\L(X)$ with $\mathscr M_\cpt(X)$.
\end{remark}

The following is independent on main results.

\begin{proposition} \label{prop:mL}
Let $X$ be a metric space which has no nonconstant Lipschitz curves. 
Namely, if $\sigma : [0,1] \to X$ is Lipschitz, then $\sigma$ is a constant map. 
Then, $\mathscr H_k^\L(X) = 0$ for all $k \ge 1$.
\end{proposition}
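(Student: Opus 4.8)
The plan is to show that the hypothesis forces $\Lip(\T^k,X)$ to consist of constant maps only for every $k \ge 1$, and then to read off the chain complex $\mathscr C_\bullet^\L(X)$ explicitly.

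First I would prove that for $k \ge 1$ every $\sigma \in \Lip(\T^k,X)$ is constant. Since the regular simplex $\T^k$ is a convex subset of a Euclidean space, any two points $p,q \in \T^k$ are joined by the affine segment $\gamma : [0,1] \to \T^k$, $\gamma(s) = (1-s)p + sq$, which is Lipschitz. Then $\sigma \circ \gamma : [0,1] \to X$ is a Lipschitz curve, hence constant by hypothesis, so $\sigma(p) = \sigma(q)$. As $p,q$ are arbitrary, $\sigma$ is constant.

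Next I would upgrade this to a homeomorphism $\Lip(\T^k,X) \cong X$ for $k \ge 1$, compatible with the topology of Theorem \ref{thm:top Lip}. Fix a vertex $v$ of $\T^k$, let $\iota : \{\ast\} \to \T^k$ send $\ast$ to $v$, and let $c : \T^k \to \{\ast\}$ be the constant map. Both are Lipschitz, so by property $(3)$ they induce continuous maps $\iota^\# : \Lip(\T^k,X) \to \Lip(\{\ast\},X)$ and $c^\# : \Lip(\{\ast\},X) \to \Lip(\T^k,X)$, while property $(5)$ identifies $\Lip(\{\ast\},X)$ with $X$. Since $c \circ \iota = \mathrm{id}_{\{\ast\}}$ one has $\iota^\# \circ c^\# = \mathrm{id}$, and since every $\sigma$ is constant, $c^\# \circ \iota^\#(\sigma) = \sigma \circ (\iota \circ c)$ is the constant map of value $\sigma(v)$, which equals $\sigma$. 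Hence $\iota^\#$ and $c^\#$ are mutually inverse homeomorphisms, and pushforward of measures yields $\mathscr C_k^\L(X) = \mathscr M_\cpt(\Lip(\T^k,X)) \cong \mathscr M_\cpt(X) =: M$ for all $k \ge 0$ (the case $k=0$ being the standard identification). I would then compute the boundary: under these identifications each face map $r_i$ carries a constant map of value $x$ to the constant map of value $x$, so every $r_i{}_\#$ becomes $\mathrm{id}_M$, and therefore $\partial_k = \bigl(\sum_{i=0}^k (-1)^i\bigr)\,\mathrm{id}_M$, which equals $\mathrm{id}_M$ for $k$ even and $0$ for $k$ odd. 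Finally I would read off the homology: for odd $k \ge 1$ we have $\ker \partial_k = M = \im \partial_{k+1}$, while for even $k \ge 2$ we have $\ker \partial_k = 0$; either way $\mathscr H_k^\L(X) = 0$.

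I expect the only genuinely delicate step to be the second one, namely verifying that $\Lip(\T^k,X) \cong X$ as topological — hence measurable — spaces rather than merely as sets, since the isomorphism of chain groups $\mathscr M_\cpt$ rests on this identification; this is exactly where properties $(3)$ and $(5)$ of Theorem \ref{thm:top Lip} enter. The first, third and fourth steps are then elementary.
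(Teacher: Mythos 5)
Your proposal is correct and follows essentially the same route as the paper, which in one line asserts $\Lip(\T^k,X) = \Lip(\T^0,X) = X$ and computes the same boundary $\partial_k = 0$ ($k$ odd), $\mathrm{id}_{\mathscr M_\cpt(X)}$ ($k$ even). Your write-up merely makes explicit two details the paper leaves implicit --- the convexity argument showing every $\sigma \in \Lip(\T^k,X)$ is constant, and the verification via properties $(3)$ and $(5)$ of Theorem \ref{thm:top Lip} that $\Lip(\T^k,X) \cong X$ as topological spaces, which is indeed what legitimizes the identification of the chain groups $\mathscr M_\cpt(\Lip(\T^k,X)) \cong \mathscr M_\cpt(X)$.
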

\begin{proof}
Let $X$ be assumed as in the assumption. 
Then, $\Lip(\T^k,X) = \Lip(\T^0,X) = X$ for all $k \ge 1$. 
Hence, $\mathscr C_k^\L(X) = \mathscr C_0^\L(X) = \mathscr M_\cpt(X)$ for $k \ge 1$. 
The boundary map $\partial = (\partial_k)_{k \ge 1}$ of the complex $\mathscr C_\bullet^\L(X)$ becomes  
\[
\partial_k = \left\{ 
\begin{aligned}
&0 && \text{ if } k \text{ is odd}, \\
&\mathrm{id_{\mathscr M_\cpt(X)}} && \text{ if } k \text{ is even}.
\end{aligned}
\right.
\]
This implies $\mathscr H_k^\L(X) = 0$ for any $k \ge 1$.
\end{proof}
Due to Proposition \ref{prop:mL}, the Lipschitz measure homology is known to be actually dpendeing on the metric structures. 
For instance, if $X = (X,d)$ is a Riemannian manifold with the distance function $d$ induced from the Riemannian metric, then by Theorem \ref{thm:main thm} and Corollary \ref{cor:sh}, $\mathscr H_\ast^\Lip(X)$ is isomorphic to the singular real homology. 
On the other hands, one can easily to show that a snowflake version $X^\alpha = (X, d^\alpha)$ for $0 < \alpha < 1$ has no nonconstant Lipschitz curve (see \cite{He}). 
Hence, by Proposition \ref{prop:mL}, we have $\mathscr H_k^\Lip(X^\alpha) = 0$ for $k \ge 1$.

\subsection{A natural map from $\mathscr C_\bullet^\L$ to $\mathbf N_\bullet^\cpt$} 
\label{subsec:natural map}
In this subsection, we construct a natural chain map from the Lipschitz measure chain complex to the current chain complex. 

Let us denote by $X$ a metric space. 
For $\sigma \in \Lip(\T^k,X)$, Riedweg and Sch\"appi (\cite{RS}) considered a functional $[\sigma] : \mathcal D^k(X) \to \mathbb R$ defined by 
\[
[\sigma] (f d \pi) = \int_{\T^k} f \circ \sigma (s) \det (\nabla (\pi \circ \sigma(s))) \, d \mathcal L^k(s)
\]
for each $f d \pi \in \mathcal D^k(X)$, 
where $\mathcal L^k$ is the $k$-dimensional Lebesgue measure. 
Here, the gradient $\nabla (\pi \circ \sigma)$ is defined for almost all points on $\T^k$, due to Rademacher's theorem. 
It is also represented as $[\sigma] = \sigma_\# \jump{1_{\T^k}}$, 
where $\jump{1_{\T^k}}$ is a $k$-current in $\T^k$ given by 
\[
\jump{1_{\T^k}}(g d \tau) = \int_{\T^k} g \det(\nabla \tau) \,d \mathcal L^k
\] 
for $g d \tau \in \mathcal D^k(\T^k)$ (see \cite{AK}).
By the definition, $[\sigma]$ is a normal $k$-current having compact support contained in the image of $\sigma$.

\begin{lemma} \label{lem:conti}
For each $f d \pi \in \mathcal D^k(X)$, the functional 
\begin{equation*} 
\Lip(\T^k,X) \ni \sigma \mapsto [\sigma](f d \pi) \in \mathbb R
\end{equation*}
is continuous in the topology on $\Lip(\T^k,X)$.
\end{lemma}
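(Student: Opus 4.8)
The plan is to reduce the statement to sequential continuity and then exploit the two analytic axioms (finite mass and continuity) satisfied by the fixed current $\jump{1_{\T^k}}$ on $\T^k$. Since the topology on $\Lip(\T^k,X)$ is metrizable by part $(0)$ of Theorem \ref{thm:top Lip}, it suffices to show that whenever $\sigma_j \to \sigma$ in $\Lip(\T^k,X)$ one has $[\sigma_j](f d \pi) \to [\sigma](f d \pi)$. From part $(0)$ the convergence $\sigma_j \to \sigma$ is in particular uniform, and from part $(1)$ we obtain a uniform bound $L := \sup_j \Lip(\sigma_j) < \infty$. I would record the two consequences I actually need: first, $f \circ \sigma_j \to f \circ \sigma$ uniformly on $\T^k$, with $\|f \circ \sigma_j - f \circ \sigma\|_\infty \le \Lip(f)\,\|\sigma_j - \sigma\|_\infty$; second, for each $i$ the composition $\pi_i \circ \sigma_j \to \pi_i \circ \sigma$ pointwise, with $\sup_{i,j} \Lip(\pi_i \circ \sigma_j) \le (\max_i \Lip(\pi_i))\, L < \infty$.

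Writing $g_j := f \circ \sigma_j$, $g := f \circ \sigma$ and $\tau_j := \pi \circ \sigma_j$, $\tau := \pi \circ \sigma$, the identity $[\sigma] = \sigma_\# \jump{1_{\T^k}}$ gives $[\sigma_j](f d \pi) = \jump{1_{\T^k}}(g_j\, d \tau_j)$, and likewise for $\sigma$. I would then split
\begin{equation*}
\jump{1_{\T^k}}(g_j\, d \tau_j) - \jump{1_{\T^k}}(g\, d \tau) = \jump{1_{\T^k}}\bigl((g_j - g)\, d \tau_j\bigr) + \bigl(\jump{1_{\T^k}}(g\, d \tau_j) - \jump{1_{\T^k}}(g\, d \tau)\bigr).
\end{equation*}
The first term is controlled by the finite mass axiom: since the mass of $\jump{1_{\T^k}}$ is $\mathcal L^k \lfloor \T^k$ (by Hadamard's inequality $|\det \nabla \tau| \le \prod_i \Lip(\tau_i)$), it is bounded by $\prod_i \Lip(\tau_{j,i}) \cdot \mathcal L^k(\T^k)\, \|g_j - g\|_\infty$, where the Lipschitz constants $\Lip(\tau_{j,i})$ are uniformly bounded while $\|g_j - g\|_\infty \to 0$; hence this term tends to $0$. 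The second term tends to $0$ by the continuity axiom applied to the fixed current $\jump{1_{\T^k}}$ with the fixed test function $g$ and the sequence $\tau_j \to \tau$, which converges pointwise with uniformly bounded Lipschitz constants exactly as that axiom requires.

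The point worth emphasizing, and the step where a naive argument would fail, is the treatment of the Jacobian factor $\det \nabla(\pi \circ \sigma_j)$. Uniform convergence $\sigma_j \to \sigma$ gives no pointwise or $\mathrm L^1$ control on the gradients $\nabla(\pi_i \circ \sigma_j)$, which may oscillate, so one cannot pass to the limit inside the integral defining $[\sigma_j](f d \pi)$ directly. The resolution is precisely that the continuity axiom of a metric current is designed to absorb this weak-type convergence of derivatives: it guarantees convergence of the multilinear functional under pointwise convergence of the $\pi$-arguments with bounded Lipschitz constants, without demanding convergence of the determinants themselves. Thus the only genuine work is the careful bookkeeping of the two axioms, and I expect no further obstacle once the representation $[\sigma] = \sigma_\# \jump{1_{\T^k}}$ together with the consequences of parts $(0)$ and $(1)$ of Theorem \ref{thm:top Lip} are in place.
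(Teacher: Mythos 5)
Your proof is correct and follows essentially the same route as the paper's: reduce to sequential continuity via metrizability (Theorem \ref{thm:top Lip} $(0)$), extract uniform convergence of $f \circ \sigma_j$ and pointwise convergence of $\pi \circ \sigma_j$ with uniformly bounded Lipschitz constants from $(0)$ and $(1)$, and conclude via $[\sigma] = \sigma_\# \jump{1_{\T^k}}$ and the current axioms for $\jump{1_{\T^k}}$. The only difference is that you spell out, with the explicit two-term splitting (finite mass plus Hadamard for the $f$-argument, the continuity axiom for the $\pi$-argument), what the paper compresses into the single phrase ``because $\jump{1_{\T^k}}$ is a current.''
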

\begin{proof}
Let us fix $f d \pi \in \mathcal D^k(X)$.
By Theorem \ref{thm:top Lip} $(0)$, $\Lip(\T^k,X)$ is metrizable. 
Hence, it suffices to show that $[\,\cdot\,](f d\pi)$ is sequentially continuous.
Let a sequence $\sigma_j$ converge to $\sigma$ in $\Lip(\T^k,X)$.
By Theorem \ref{thm:top Lip} $(1)$, $f \circ \sigma_j \to f \circ \sigma$ and $\pi \circ \sigma_j \to \pi \circ \sigma$ uniformly as $j \to \infty$, and $\sup_j \Lip(\pi \circ \sigma_j) < \infty$.
Hence, we have 
\[
[\sigma_j](f d\pi) = \jump{1_{\T^k}} (f \circ \sigma_j \, d \pi \circ \sigma_j) \to \jump{1_{\T^k}}(f \circ \sigma\, d\pi \circ \sigma) = [\sigma](f d \pi), 
\]
as $j \to \infty$, 
because $\jump{1_{\T^k}}$ is a current.
This completes the proof.
\end{proof}

\begin{remark} \upshape \label{rem:not conti}
When $\Lip(\T^k,X)$ is merely equipped with the compact-open topology, the functional $[\,\cdot\,](f d\pi)$ is not continuous in general. 

Let us consider a family $\{u_\epsilon\}_{\epsilon > 0}$ of smooth maps  from $\T^k$ to $\mathbb R^k$, for $k \ge 2$ such that 
$u_\epsilon \to u$ as $\epsilon \to 0$ uniformly, and $\det (\nabla u_\epsilon) \rightharpoonup 1$ in $L^\infty(\T^k)$ weakly$^\ast$, as $\epsilon \to 0$, where $u(x) = 0 \in \mathbb R^k$ for $x \in \T^k$.
Here, $\T^k$ is considered as a convex subset of $\mathbb R^k$.
Such a family can be found in \cite{ABF}, which is actually given by
\[
\left\{
\begin{aligned}
(u_\epsilon)_1(x) &= \sqrt{2 \epsilon} \sin (x_1 / \epsilon), \\
(u_\epsilon)_2(x) &= x_2 \sqrt{2 \epsilon} \cos(x_1 / \epsilon), \\
(u_\epsilon)_i(x) &= x_i \hspace{10pt} (3 \le i \le k). 
\end{aligned}
\right.
\]
Obviously, $\det(\nabla u) = 0$.
Note that each $u_\epsilon$ is Lipschitz and 
\[
\sup_{\epsilon > 0} \Lip(u_\epsilon) = \infty.
\]

By using this family, we show the discontinuity of $[\,\cdot\,](f d\pi)$ on $\Lip(\T^k,\mathbb R^k)$ in the compact-open topology, for a particular choice of $f d \pi \in \mathcal D^k(\mathbb R^k)$.
Indeed, we choose $f(x) = 1$ and $\pi(x) = x$ for $x \in \mathbb R^k$.
Then, we have 
\[
[u_\epsilon](f d \pi) = \int_{\T^k} \det(\nabla u_\epsilon) \, d \mathcal L^k \to \mathcal L^k(\T^k) \hspace{10pt} \text{ as } \epsilon \to 0. 
\]
Hence, $[u_\epsilon](f d\pi)$ does not converge to $[u](f d\pi) = 0$ in the compact-open topology. 
\end{remark}

\begin{definition} \upshape
For $\mu \in \mathscr C_k^\L(X)$, we define a functional $T^\mu : \mathcal D^k(X) \to \mathbb R$ by 
\begin{equation} \label{eq:Tmu}
T^\mu(f d \pi) = \int_{\Lip(\T^k,X)} [\sigma](f d \pi) \, d \mu
\end{equation}
for $f d \pi \in \mathcal D^k(X)$.
\end{definition}
Due to Lemma \ref{lem:conti}, the integral \eqref{eq:Tmu} is well-defined.

\begin{remark} \upshape \label{rem:not integrable}
Let us denote by $\mathfrak T_\CO$ the compact-open topology on $\Lip(\T^k,X)$. 
We show that the integral \eqref{eq:Tmu} is not well-defined for 
a finite Borel measure on $(\Lip(\T^k,X), \mathfrak T_\CO)$ with compact support.

We consider smooth functions $v_\epsilon, v_0 \in C^1(\T^2, \mathbb R^2)  \subset \Lip(\T^2,\mathbb R^2)$ for $\epsilon > 0$, similar to functions appeared in Remark \ref{rem:not conti}, defined by 
\[
\left\{
\begin{aligned}
(v_\epsilon)_1 (x) &= \sqrt{\epsilon} \sin(x_1 / \epsilon^2) \\
(v_\epsilon)_2 (x) &= \sqrt{\epsilon} x_2 \cos(x_1 / \epsilon^2),
\end{aligned}
\right.
\] 
and $v_0 \equiv 0$.
Here, we identify $\T^2$ with $\{(x_1,x_2) \in \mathbb R^2 \mid x_1 \in [-1,1], x_2 \in [0, \sqrt 3 x_1]\}$.
Then, the map 
\[
V : [0,1] \ni \epsilon \mapsto v_\epsilon \in \Lip(\T^2,\mathbb R^2)
\]
is continuous in $\mathfrak T_\CO$. 
Hence, its image $K := \{v_\epsilon \mid 0 \le \epsilon \le 1\}$ is compact in $\mathfrak T_\CO$.  
We consider the push-forward $\mu := V_\# \mathcal L^1$ of the Lebesgue measure $\mathcal L^1$ on $[0,1]$ with respect to the map $V$. 
By the definition, $\mu$ is a finite positive Borel measure on $(\Lip(\T^2,\mathbb R^2), \mathfrak T_\CO)$ 
supported in the compact set $K$. 
For $f d\pi \in \mathcal D^2(\mathbb R^2)$ with $f = 1$ and $\pi = \mathrm{id}$, we verify that $[\,\cdot\,](f d\pi)$ can not integrable in $\mu$. 
Suppose that 
\[
\Lip(\T^2, \mathbb R^2) \ni \sigma \mapsto [\sigma](f d \pi) \in \mathbb R
\]
is Borel measurable in $\mathfrak T_\CO$. 
Then, we have 
\begin{align*}
\int_{\Lip(\T^2, \mathbb R^2)} [\sigma](f d \pi) \, d \mu(\sigma) 
&= \int_0^1 [v_\epsilon] (f d\pi) \, d \epsilon \\
&= \int_0^1 \int_{\T^2} \det(\nabla v_\epsilon)(x) \,d x d \epsilon \\
&= C \int_0^1 \frac{d \epsilon}{\epsilon} +  C',
\end{align*}
where $C$ and $C'$ are constants with $C > 0$. 
Therefore, the functional of $[\,\cdot\,](f d \pi)$ is not integrable in $\mu$.
\end{remark}

\begin{theorem} \label{thm:Tmu}
Let $X$ be an arbitrary metric space. 
For $\mu \in \mathscr C_k^\L(X)$, $T^\mu$ is a normal $k$-current in $X$ of compact support.
Further, $\mathscr C_\bullet^\L(X) \ni \mu \mapsto T^\mu \in \N_\bullet^\cpt(X)$ is a chain map.
\end{theorem}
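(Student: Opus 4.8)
The plan is to verify the current axioms for $T^\mu$ one at a time, reducing each to the corresponding property of the individual currents $[\sigma]$ followed by integration against $\mu$, and then to deduce normality and the chain map property together from a Stokes-type identity. Throughout I would fix a compact determination set $\mathcal K \subset \Lip(\T^k,X)$ of $\mu$, so that the integral \eqref{eq:Tmu} (already well-defined by Lemma \ref{lem:conti}) is effectively taken over $\mathcal K$. Two facts about $\mathcal K$ are recorded at the outset: its image $\im\,\mathcal K$ is compact by Corollary \ref{cor:cpt}, and $L := \sup_{\sigma \in \mathcal K}\Lip(\sigma) < \infty$ — the latter because, were it infinite, a sequence in $\mathcal K$ with diverging Lipschitz constants would, by compactness and metrizability (Theorem \ref{thm:top Lip} (0)), admit a convergent subsequence, contradicting property (1). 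Combined with the Hadamard estimate $|\det\nabla(\pi\circ\sigma)| \le (\Lip\sigma)^k\prod_{i}\Lip(\pi_i) \le L^k\prod_i\Lip(\pi_i)$ valid a.e.\ on $\T^k$, these yield a bound $|[\sigma](fd\pi)| \le L^k\prod_i\Lip(\pi_i)\,\|f\|_\infty\,\mathcal L^k(\T^k)$ uniform over $\sigma\in\mathcal K$, which will drive every limit argument.

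Multilinearity and locality are immediate: each $[\sigma]$ is multilinear and local, and these pass through the linear operation $\int(\cdot)\,d\mu$. For the continuity axiom, given $\pi^h\to\pi$ pointwise with $M := \sup_{i,h}\Lip(\pi_i^h) < \infty$, I would note that $[\sigma](fd\pi^h)\to[\sigma](fd\pi)$ for each $\sigma$ since $[\sigma]$ is itself a current, while the uniform bound above (with $M$ in place of $\Lip(\pi_i)$) dominates the integrands by a $\mu$-integrable constant; dominated convergence then gives $T^\mu(fd\pi^h)\to T^\mu(fd\pi)$.

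For the finite-mass axiom I would construct the controlling measure explicitly by setting, for Borel $B\subseteq X$,
\[
\nu(B) := \int_{\mathcal K}(\Lip\sigma)^k\,\mathcal L^k(\sigma^{-1}(B))\,d|\mu|(\sigma).
\]
Writing $\int_{\T^k}|f\circ\sigma|\,d\mathcal L^k = \int_X|f|\,d(\sigma_\#\mathcal L^k)$ and applying Tonelli, the Hadamard estimate gives $|T^\mu(fd\pi)| \le \prod_i\Lip(\pi_i)\int_X|f|\,d\nu$; moreover $\nu(X)\le L^k\,\mathcal L^k(\T^k)\,|\mu|(\mathcal K)<\infty$ and $\nu$ is concentrated on the compact set $\im\,\mathcal K$, so $\nu$ is finite and tight. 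This simultaneously shows that $T^\mu$ has finite mass and that its support lies in the compact set $\im\,\mathcal K$. I expect the main obstacle to be the measurability underpinning $\nu$ and the Tonelli step: one must check that $\sigma\mapsto\mathcal L^k(\sigma^{-1}(B))$ is $|\mu|$-measurable, which I would obtain from the joint continuity of the evaluation map $(\sigma,s)\mapsto\sigma(s)$ on $\mathcal K\times\T^k$ (continuous already in the compact-open topology, hence in the finer topology) together with Tonelli for $|\mu|\otimes\mathcal L^k$, and that $\sigma\mapsto(\Lip\sigma)^k$ is measurable, which follows from the continuity of $\Lip(\cdot)$ (Proposition \ref{prop:met1}).

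Finally, for normality and the chain map property together, I would use the Stokes-type identity $\partial[\sigma] = \sum_{i=0}^k(-1)^i[r_i\sigma]$, valid because $[\sigma]=\sigma_\#\jump{1_{\T^k}}$ and $\partial$ commutes with push-forward, so that $\partial[\sigma]$ is the push-forward of the boundary of the fundamental current of $\T^k$, i.e.\ the signed sum over its faces. Integrating against $\mu$ and applying the change-of-variables formula $\int g(r_i\sigma)\,d\mu = \int g\,d(r_i{}_\#\mu)$ for the integrand $g = [\,\cdot\,](fd\pi)$ (continuous by Lemma \ref{lem:conti}, with $r_i$ continuous by Theorem \ref{thm:top Lip} (3)) gives
\[
\partial T^\mu(fd\pi) = \sum_{i=0}^k(-1)^i T^{r_i{}_\#\mu}(fd\pi) = T^{\partial\mu}(fd\pi).
\]
Since $\mu\mapsto T^\mu$ is linear, this is exactly the chain map property $\partial T^\mu = T^{\partial\mu}$; and because $\partial\mu\in\mathscr C_{k-1}^\L(X)$, the finite-mass step applied to $\partial\mu$ shows $\partial T^\mu=T^{\partial\mu}$ has finite mass, whence $T^\mu$ is normal. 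The point requiring care here is justifying the face identity at the level of the currents $[\sigma]$ — matching the analytic signs coming from the affine parametrizations of the faces with the simplicial signs $(-1)^i$ — which I would either cite from Riedweg–Sch\"appi \cite{RS} and Ambrosio–Kirchheim \cite{AK} or verify directly via Stokes' theorem on $\T^k$.
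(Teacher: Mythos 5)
Your proposal is correct and its architecture is exactly that of the paper: multilinearity and locality pass through the integral, the continuity axiom is dominated convergence using Lemma \ref{lem:conti} and the uniform Hadamard bound, and normality and the chain map property come together from $\partial[\sigma]=\sum_{i}(-1)^i[r_i\sigma]$ plus the change of variables for the push-forward measures $r_i{}_\#\mu$, yielding $\partial T^\mu = T^{\partial\mu}$ just as in the paper. The one genuine divergence is the finite-mass axiom: the paper defines the positive linear functional $C(\im\,\mathcal K) \ni g \mapsto \int_{\mathcal K}\int_{\T^k} g\circ\sigma\,d\mathcal L^k\,d|\mu|$ and invokes the Riesz--Markov--Kakutani representation theorem to produce the dominating measure $\nu$, thereby sidestepping all measurability questions, whereas you construct $\nu$ directly as the mixture $\nu(B)=\int_{\mathcal K}(\Lip\sigma)^k\,\mathcal L^k(\sigma^{-1}(B))\,d|\mu|(\sigma)$ of push-forwards $\sigma_\#\mathcal L^k$ and pay for it with the Tonelli bookkeeping (joint continuity of evaluation on $\mathcal K\times\T^k$ for measurability in $\sigma$ of $\mathcal L^k(\sigma^{-1}(B))$, continuity of $\Lip(\cdot)$ from Proposition \ref{prop:met1} for the weight). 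Your route is more hands-on and even slightly sharper, keeping the weight $(\Lip\sigma)^k$ inside the integral where the paper pulls out the crude constant $L^k$; the paper's route is shorter because representing a functional on $C(\im\,\mathcal K)$ requires no measurable-selection checks at all. Your subsequence justification of $L=\sup_{\sigma\in\mathcal K}\Lip(\sigma)<\infty$ is also a more careful version of the paper's bare citation of property (1) of Theorem \ref{thm:top Lip} (alternatively, continuity of $\Lip(\cdot)$ on the compact set $\mathcal K$ gives it at once), and your closing remark that normality follows because $\partial\mu\in\mathscr C_{k-1}^\L(X)$ makes explicit a point the paper leaves implicit.
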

\begin{proof}
Let us take $\mu \in \mathscr C_k^\L(X)$. 
Let $\mathcal K$ denote its compact determination set in $\Lip(\T^k,X)$. 
By Corollary \ref{cor:cpt}, the image set $\im\, \mathcal K$ is compact.
We first prove that $T^\mu$ is a $k$-current in $X$.
By the definition, $T^\mu$ is multilinear on $\mathcal D^k(X)$. 
Since each $[\sigma]$ satisfies the locality, $T^\mu$ satisfies the locality.
To verify that $T^\mu$ has finite mass, we estimate the absolute value $|T^\mu(f d \pi)|$ as follows.
\begin{align*}
|T^\mu (f d \pi)| 
&\le \int_{\mathcal K} \int_{\T^k} \left|f \circ \sigma \det (\nabla(\pi \circ \sigma)) \right| \, d \mathcal L^k d |\mu| \\
&\le \Lip(\sigma)^k \prod_{i=1}^k \Lip(\pi_i) \int_{\mathcal K} \int_{\T^k} |f \circ \sigma| \, d \mathcal L^k d |\mu| \\
&\le L^k \prod_{i=1}^k \Lip(\pi_i) \int_{\mathcal K} \int_{\T^k} |f \circ \sigma| \, d \mathcal L^k d |\mu|.
\end{align*}
Here, $L := \sup_{\sigma \in \mathcal K} \Lip(\sigma)$, which is finite by (1) of Theorem \ref{thm:top Lip}.
Now, we note that the map 
\[
\mathcal K \ni \sigma \mapsto \int_{\T^k} g \circ \sigma \, d \mathcal L^k
\]
is continuous for each $g \in C(\im\, \mathcal K)$, where $C(\im\,\mathcal K)$ is the space of all continuous functions from $\im\, \mathcal K$ to $\mathbb R$.
Hence, a functional 
\begin{equation*} 
C(\im\, \mathcal K) \ni g \mapsto \int_{\mathcal K} \int_{\T^k} g \circ \sigma \,d \mathcal L^k d |\mu| \in \mathbb R
\end{equation*}
is well-defined. 
Since this functional is positive linear, 
due to the Riesz-Markov-Kakutani representation theorem, 
there is a Borel
measure $\nu$ 
on $X$ supported in the compact set $\im\, \mathcal K$ such that 
\[
|T^\mu(f d \pi)| \le L^k \prod_{i=1}^k \Lip(\pi_i) \int_X |f| \, d \nu
\]
holds, for every $f d \pi \in \mathcal D^k(X)$. 
So, $T^\mu$ is proved to have finite mass.

Next, we prove that $T^\mu$ is continuous in the sense of current. 
Let us take sequences $\pi_i^j \in \Lip(X)$ converging to $\pi_i$ as $j \to \infty$ for $i = 1, \dots, k$ with $\sup_{i,j}\Lip(\pi_i^j) < \infty$.
We set $L' := \sup_{i,j}\Lip(\pi_i^j)$.
Then, $[\,\cdot\,](f d\pi^j)$ converges to $[\,\cdot\,](f d\pi)$ pointwise on $\Lip(\T^k,X)$ as $j \to \infty$.
Further, we have 
\begin{align*}
\left| \int_{\mathcal K} [\sigma](f d \pi^j) \, d |\mu| \right| 
&\le \int_{\mathcal K} \left|[\sigma](f d\pi^j)\right| d|\mu| \\
&\le \int_{\mathcal K} \int_{\T^k} \left| f \circ \sigma \det(\nabla (\pi^j \circ \sigma))\right| d \mathcal L^k\, d|\mu| \\
&\le (L L')^k \int_X |f| \,d \nu
\end{align*}
for all $j \in \mathbb N$.
Therefore, by the dominated convergence theorem, 
\[
\lim_{j \to \infty} T^\mu(f d\pi^j) = T^\mu(f d\pi)
\]
holds. 
Thus, the continuity of $T^\mu$ is proved.

To prove that $T^\mu$ is normal, let us compare $\partial T^\mu$ with $T^{\partial \mu}$.
For any $f d \pi \in \mathcal D^{k-1}(X)$, we have 
\begin{align*}
\partial T^\mu(f d \pi) &= T^\mu( d f \wedge d \pi) = \int_{\Lip(\T^k,X)} [\sigma](d f \wedge d \pi) \, d \mu \\
&= \int_{\Lip(\T^k,X)} [\partial \sigma](f d \pi) \, d \mu = \sum_{i=0}^k (-1)^i \int_{\Lip(\T^k,X)} [r_i \sigma] (f d \pi) \, d \mu,
\end{align*}
where $r_i$ denotes the restriction to the $i$-th face. 
On the other hands, we obtain 
\begin{align*}
T^{\partial \mu}(f d \pi) &= \int_{\Lip(\T^{k-1},X)} [\tau] (f d \pi) \, d (\partial \mu) \\
&= \sum_{i=0}^k (-1)^i \int_{\Lip(\T^{k-1},X)} [\tau] (f d \pi) \, d \left( r_i {}_\# \mu \right) \\
&= \sum_{i=0}^k (-1)^i \int_{\Lip(\T^k,X)} [r_i \sigma] (f d \pi) \, d \mu.
\end{align*}
Therefore, $\partial T^\mu = T^{\partial \mu}$, and hence, $T^\mu$ is normal.
Further, we know that $\mu \mapsto T^\mu$ is a chain map. 
This completes the proof. 
\end{proof}

The following is fundamental and important.
\begin{lemma} \label{lem:0-th}
Let $X$ be an arbitrary metric space.
The map $\mathscr C_0^\L(X) \ni \mu \mapsto T^\mu \in \N_0^\cpt(X)$ is isomorphic.
\end{lemma}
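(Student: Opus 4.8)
The plan is to first make the degree-zero map completely explicit, and then to establish injectivity and surjectivity separately. Under the identification $\Lip(\T^0,X)=X$ furnished by $(5)$ of Theorem \ref{thm:top Lip}, a $0$-simplex $\sigma \in \Lip(\T^0,X)$ is just a point $x \in X$; since $\T^0$ is a single point carrying unit $\mathcal L^0$-mass and there are no differentials in degree zero, $[\sigma](f) = f(x)$ for every $f \in \Lipb(X) = \mathcal D^0(X)$. Consequently, for $\mu \in \mathscr C_0^\L(X) = \mathscr M_\cpt(X)$ the formula \eqref{eq:Tmu} reduces to $T^\mu(f) = \int_X f \, d\mu$, which is the rigorous form of the correspondence \eqref{eq:correspondence}. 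Thus the map in question is the linear assignment sending $\mu$ to the functional $f \mapsto \int_X f\,d\mu$, and it remains to prove that this assignment is injective and surjective.

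For injectivity, suppose $\int_X f\,d\mu = 0$ for every $f \in \Lipb(X)$. Since $\mu$ has finite total variation, I would test against the bounded Lipschitz functions $f_n = \max\{0,\, 1 - n\, d(\cdot, C)\}$ attached to a closed set $C$; these lie in $[0,1]$ and decrease pointwise to the indicator of $C$, so the dominated convergence theorem gives $\mu(C) = \lim_n \int_X f_n\,d\mu = 0$. Applying this to the two measures of the Jordan decomposition $\mu = \mu^+ - \mu^-$ shows that $\mu^+$ and $\mu^-$ agree on all closed sets and on $X$ itself; since the closed sets form a $\pi$-system generating the Borel $\sigma$-algebra, Dynkin's lemma forces $\mu^+ = \mu^-$, that is $\mu = 0$.

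The substantive part is surjectivity, which is the step I expect to be the main obstacle. Let $T \in \N_0^\cpt(X)$. In degree zero $\partial T$ is defined on $\mathcal D^{-1}(X) = 0$, so every $0$-current of finite mass is automatically normal; write $K := \mathrm{supp}\,T$, which is compact, and let $\|T\|$ be the mass measure of $T$ in the sense of Ambrosio--Kirchheim, a finite tight Borel measure with $\mathrm{supp}\,\|T\| = K$. From the finite-mass inequality $|T(f)| \le \int_X |f|\,d\|T\|$ together with $\|T\|(X \setminus K) = 0$ it follows that $T(f)$ depends only on $f|_K$, so $T$ factors as $\bar T \circ R$, where $R : \Lipb(X) \to \Lip(K)$ is restriction. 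Because $K$ is compact, $R$ is surjective by the McShane extension theorem, so $\bar T : \Lip(K) \to \mathbb R$ is a well-defined linear functional bounded by $\|T\|(K)$ in the supremum norm. Since $\Lip(K)$ is a point-separating subalgebra of $C(K)$ containing the constants, it is dense by the Stone--Weierstrass theorem, and $\bar T$ therefore extends to a bounded functional on $C(K)$; the Riesz--Markov--Kakutani representation theorem then yields a finite signed Radon measure $\mu$ on $K$ with $\bar T(g) = \int_K g\,d\mu$. Regarding $\mu = \mu \lfloor K$ as an element of $\mathscr M_\cpt(X)$ with compact determination set $K$, we obtain $T(f) = \int_X f\,d\mu = T^\mu(f)$ for all $f$, as required. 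The crux is precisely this passage to a countably additive measure: compact support and tightness of $\|T\|$ are exactly what let us localize to $C(K)$ and invoke Riesz--Markov, whereas a naive Hahn--Banach extension of $T$ to $C_b(X)$ would only produce a finitely additive set function.
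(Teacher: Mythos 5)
Your proof is correct and follows essentially the same route as the paper: identify $T^\mu(f)=\int_X f\,d\mu$, then for surjectivity localize a compactly supported normal $0$-current to its support $K$ and represent the resulting functional on $C(K)$ via the Riesz--Markov--Kakutani theorem, pulling the measure back to $X$. Your only departures are cosmetic refinements the paper leaves implicit --- an explicit injectivity argument via Lipschitz approximation of indicators plus Dynkin's lemma (where the paper says ``obviously''), and a McShane/Stone--Weierstrass density step in place of the paper's direct use of the restriction $T\lfloor K$ on bounded Borel functions.
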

\begin{proof}
We prove that the map $T^{(\,\cdot\,)} : \mu \mapsto T^\mu$ is surjective.
Let us take $T \in \N_0^\cpt(X)$. 
Let $K$ denote its compact support. 
Then, $T \lfloor K$ and $T$ are same as currents. 
Here, $T \lfloor K$ is the linear functional defined by 
\[
T \lfloor K (f) = T(1_K f)
\]
for all bounded Borel functions $f : X \to \mathbb R$, where $1_K$ is the characteristic function of $K$.
Further, $T \lfloor K$ is also considered as a normal current in $K$. 
Due to the finite mass axiom, $T = T \lfloor K$ is continuous on the space $C(K)$ with respect to the uniform topology. 
By the Riesz-Markov-Kakutani theorem, there is a unique regular Borel measure $\mu$ on $K$ such that 
\[
T \lfloor K(f) = \int_K f \,d\mu
\]
holds for all $f \in C(K)$, and that the total variation of $\mu$ equals the operator norm of $T$ as the linear functional $T : C(K) \to \mathbb R$. 

Let us regard $\mu$ as a signed Borel measure $\bar \mu$ on $X$. 
Namely, it is defined by 
\[
\bar \mu (A) = \mu(A \cap K)
\]
for every Borel set $A$ of $X$.
Obviously, $\bar \mu$ has the compact determination set $K$ and is of finite total variation.
Namely, $\bar \mu \in \mathscr M_\cpt(X) = \mathscr C_0(X) = \mathscr C_0^\L(X)$.
Then, for every $f \in \Lipb(X)$, we have 
\[
T^{\bar \mu} (f) = \int_X f \,d\bar \mu = \int_K f \,d \mu = T \lfloor K (f |_K) = T(f).
\]
Therefore, $T^{(\cdot)}$ is surjective.
If $T^\mu = 0$, then obviously $\mu = 0$.
Hence, the map $T^{(\cdot)}$ is injective.
This completes the proof. 
\end{proof}


\section{Proof of Theorem \ref{thm:main thm}} \label{sec:proof:main thm}

In this section, we prove Theorem \ref{thm:main thm} by using cosheaf theory. 

\subsection{Review of cosheaf theory} \label{subsec:cosheaf}
Let us recall the notion of cosheaf. 
Let $X$ be a topological space and $\mathsf O(X)$ the set of all open sets in $X$. 
We regard $\mathsf O(X)$ as a category, by assigning an arrow $U \to V$ if and only if $U \subset V$.
Let $\mathsf {Ch}$ denote the category of chain complices of real vector spaces and chain maps.
Here, we consider chain complices indexed by $\mathbb Z$.
For a chian complex $(C_k)_{k \ge 0}$ (resp. $(D_k)_{k \ge -1}$) of nonnegative degrees (resp. of degrees not less than $-1$), we extend it to a chian complex of integer degrees by setting $C_\ell = 0$ for all $\ell < 0$ (resp. $D_\ell = 0$ for all $\ell <-1$).
A covariant functor $\mathfrak A : \mathsf O(X) \to \mathsf {Ch}$ is called a {\it precosheaf} (of $\mathsf{Ch}$-valued).
We denote the map $\mathfrak A(U \to V) : \mathfrak A(U) \to \mathfrak A(V)$ by $i_{V,U}$. 
The precosheaf $\A$ is called a {\it cosheaf} if, 
for any family $\{U_\alpha\}$ of open sets in $X$, 
\[
\bigoplus_{\alpha, \beta} \A(U_\alpha \cap U_\beta) \xrightarrow{\Phi_1} \bigoplus \A(U_\alpha) \xrightarrow{\Phi_0} \A(U) \to 0
\]
is exact, where $U = \bigcup_\alpha U_\alpha$, $\Phi_0 = \sum_\alpha i_{U,U_\alpha}$ and $\Phi_1 = \sum_{\alpha,\beta} i_{U_\beta, U_\alpha \cap U_\beta} - i_{U_\alpha, U_\alpha \cap U_\beta}$. 

Let $\mathsf{Vect}$ denote the category of all vector spaces and all linear maps over $\mathbb R$.
We define two functors $H_m, \tilde H_m : \mathsf{Ch} \to \mathsf{Vect}$ for each $m \ge 0$ as follows. 
Let $C = (C_k)_{k \in \mathbb Z}$ be a chain complex. 
Then, $H_m(C)$ denotes the $m$-th homology of the restricted chain complex $(C_k)_{k \ge 0}$ and $\tilde H_m(C)$ denotes the usual $m$-th homology of $C$: 
\begin{align*}
&H_m(C) := \left\{ 
\begin{aligned}
&\!\ker \partial / \im\, \partial &&\text{ if } m \ge 1\\
&C_0 / \im\, \partial &&\text{ if } m = 0, 
\end{aligned}
\right.
\\
&
\tilde H_m(C) := 
\ker \partial / \im\, \partial \hspace{22pt}\text{ for any } m.
\end{align*}
For a precosheaf $\A : \mathsf O(X) \to \mathsf {Ch}$, we have precosheaves $H_m(\A), \tilde H_m(\A) : \mathsf O(X) \to \mathsf{Vect}$ for all $m \ge 0$. 

A precosheaf $\A''$ on $X$ is said to be {\it flabby} if $\A''(O) \to \A''(X)$ is injective for every $O \in \mathsf O(X)$.
The precosheaf $\A''$ is said to be {\it locally trivial} if for any $x \in X$ and $O \in \mathsf O(X)$ with $x \in O$, there is $O' \in \mathsf O(X)$ with $x \in O' \subset O$ such that the map $i_{O,O'} : \A''(O') \to \A''(O)$ is zero.

In \cite{Mi}, the following is proved. 
\begin{theorem}[\cite{Mi}] \label{thm:coincidence}
Let $X$ be a paracompact topological space. 
Let $\A$ and $\A'$ be precosheaves on $X$ of $\mathsf {Ch}$-valued.  
Suppose the following conditions: 
\begin{itemize}
\item[(i)] the precosheaves $\A_m$ and $\A_m'$ are flabby cosheaves of $\mathsf{Vect}$-valued for $m \ge 0$,
\item[(ii)] there exists a natural transformation $\phi : \A \to \A'$ such that $\A_0(O) \to \A_0'(O)$ is surjective and $\A_{-1}(O) \to \A_{-1}'(O)$ is injective for each $O \in \mathsf O(X)$,
\item[(iii)] the precosheaves $\tilde H_m(\A)$ and $H_n(\A')$ are locally trivial on $X$ for all $m \ge 0$ and $n \ge 1$. 
\end{itemize}
Then, $\phi$ induces an isomorphism 
\[
\phi_\ast : H_m(\A) \to H_m(\A')
\]
between precosheaves for each $m \ge 0$.
\end{theorem}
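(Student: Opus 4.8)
The plan is to prove the statement by a local-to-global argument built on the two spectral sequences of the generalized Mayer--Vietoris (\v{C}ech) double complex of a cosheaf of chain complexes. First I would reduce to the value on the whole space: all hypotheses --- flabbiness, the cosheaf axiom, local triviality, and the degree-$0$/degree-$(-1)$ conditions on $\phi$ --- are inherited by the restriction of $\A$, $\A'$ and $\phi$ to any open $O \in \mathsf O(X)$, so it suffices to show that $\phi_\ast \colon H_m(\A(X)) \to H_m(\A'(X))$ is an isomorphism and then to apply the conclusion to each open subspace $O$ in place of $X$ (legitimate in the metrizable setting of interest, where open subspaces remain paracompact).

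Fix a locally finite open cover $\mathcal U = \{U_\alpha\}$ of $X$ and form, for the nonnegatively graded part $(\A_q)_{q \ge 0}$, the double complex $\check C_p(\mathcal U, \A_q) = \bigoplus \A_q(U_{\alpha_0} \cap \dots \cap U_{\alpha_p})$ with the \v{C}ech differential in $p$ and the internal $\partial$ in $q$. The first key input is condition (i): a \emph{flabby} cosheaf is acyclic for the \v{C}ech complex of any cover, i.e.\ the augmented complex $\dots \to \bigoplus \A_q(U_\alpha \cap U_\beta) \to \bigoplus \A_q(U_\alpha) \to \A_q(X) \to 0$ is exact (the cosheaf analogue of the acyclicity of flabby sheaves; flabbiness, not merely the cosheaf axiom, is what upgrades the two-term exactness to all degrees). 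This forces the spectral sequence that runs the \v{C}ech differential first to collapse onto the column $p=0$, identifying the homology of the total complex with $H_m(\A(X))$; the same holds for $\A'$. The second spectral sequence, running $\partial$ first, has $E^2_{p,q} = \check H_p(\mathcal U, H_q(\A))$, and $\phi$ induces a morphism of both spectral sequences.

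Next I would exploit the local triviality in (iii). For $q \ge 1$ one has $\tilde H_q(\A) = H_q(\A)$ and $H_q(\A')$ both locally trivial, so passing to the limit over refinements of covers --- here paracompactness supplies arbitrarily fine covers and the partitions needed to control the refinement maps --- the coefficient homology in positive degrees contributes nothing: $\check H_p(X, H_q(\A)) = \check H_p(X, H_q(\A')) = 0$ for $q \ge 1$. Hence both spectral sequences degenerate to the single row $q = 0$, yielding natural isomorphisms $H_m(\A(X)) \cong \check H_m(X, H_0(\A))$ and $H_m(\A'(X)) \cong \check H_m(X, H_0(\A'))$ compatible with $\phi$. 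It therefore remains to show that $\phi$ induces an isomorphism $\check H_m(X, H_0(\A)) \to \check H_m(X, H_0(\A'))$, for which it suffices that $H_0(\A) \to H_0(\A')$ be an isomorphism of the associated cosheaves, since cosheaf homology factors through cosheafification.

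Finally, the degree-$0$ comparison is where condition (ii) and the reduced local triviality interact. Surjectivity of $\mathrm{coker}(\partial_1) = H_0(\A)(O) \to H_0(\A')(O)$ is immediate from the surjectivity of $\A_0 \to \A_0'$. For injectivity after cosheafification, take $a \in \A_0(O)$ with $\phi_0(a) \in \im \partial_1'$; applying the augmentation gives $\phi_{-1}(\partial_0 a) = \partial_0' \phi_0(a) = 0$, so $\partial_0 a = 0$ by the injectivity of $\A_{-1} \to \A_{-1}'$ in (ii), whence $[a]$ lies in the image of $\tilde H_0(\A)(O)$. Thus $\ker(H_0(\A) \to H_0(\A'))$ is a subprecosheaf of the image of $\tilde H_0(\A)$, which is locally trivial by (iii); a locally trivial precosheaf has vanishing cosheafification, so $H_0(\A) \to H_0(\A')$ is an isomorphism of cosheaves, completing the argument. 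I expect the main obstacle to be the rigorous foundation of cosheaf (\v{C}ech) homology for the coefficient precosheaves $H_q(\A)$ and $H_q(\A')$, which are \emph{not} themselves cosheaves: proving that the flabby complexes are \v{C}ech-acyclic and that local triviality genuinely annihilates the higher terms in the limit over covers is delicate, since \v{C}ech homology involves inverse limits that are not exact, and it is precisely here that paracompactness of $X$ must be used.
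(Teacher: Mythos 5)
The paper offers no proof of this statement at all---it is imported verbatim from \cite{Mi}, and the argument there is precisely the Bredon-style cosheaf machinery you outline: \v{C}ech double complexes over covers, \v{C}ech-acyclicity of flabby cosheaves, vanishing (in the limit over refinements, using paracompactness) of the \v{C}ech homology of locally trivial precosheaves, and the degree-$0$/degree-$(-1)$ conditions of (ii) combined with local triviality of $\tilde H_0(\A)$ to show that $H_0(\A) \to H_0(\A')$ is a local isomorphism of precosheaves. Your proposal thus matches the cited proof in structure and correctly identifies the genuine technical pressure points (non-exactness of the inverse limits in \v{C}ech homology, acyclicity of flabby cosheaves, paracompactness of the open sets on which the precosheaf-level conclusion is asserted), all of which are exactly the content of Chapter VI of Bredon's book \cite{B} on which \cite{Mi} relies.
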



The following is convenience.
\begin{proposition}[{\cite[Chap.\!\! VI]{B}}] \label{prop:cosheaf}
Let $\A$ be a precosheaf on a topological space $X$. 
Then, $\A$ is a cosheaf if and only if it satisfies the following two conditions: 
\begin{itemize}
\item For any two open sets $U$ and $V$ in $X$, the short sequence 
\[
\A(U \cap V) \xrightarrow{\Phi_1} \A(U) \oplus \A(V) \xrightarrow{\Phi_0} \A(U \cup V) \to 0
\]
is exact, where $\Phi_1 = -i_{U, U \cap V} + i_{V, U \cap V}$ and $\Phi_0 = i_{U \cup V, U} + i_{U \cup V, V}$.
\item 
Let $\{U_\alpha\}$ be a family of open sets of $X$, directed upwards by inclusion.
Namely, for any indices $\alpha, \alpha'$, there exists $\alpha''$ such that $U_\alpha \cup U_{\alpha'} \subset U_{\alpha''}$. 
We set $U = \bigcup_\alpha U_\alpha$.
Then, the maps $i_{U,U_\alpha} : \A(U_\alpha) \to \A(U)$ induces an isomorphism 
\[
\varinjlim i_{U,U_\alpha} : \varinjlim \A(U_\alpha) \to \A(U).
\]
\end{itemize}
\end{proposition}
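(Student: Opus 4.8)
The plan is to show that the two itemized conditions are \emph{jointly} equivalent to the cosheaf axiom recalled before Theorem~\ref{thm:coincidence}, proving each implication in turn. For the ``only if'' direction I would specialize the exact sequence defining a cosheaf to small families. Applied to the two-element family $\{U,V\}$, the diagonal summands indexed by $\alpha=\beta$ are killed by $\Phi_1$, since $i_{U_\alpha,U_\alpha\cap U_\alpha}-i_{U_\alpha,U_\alpha\cap U_\alpha}=0$, while the two off-diagonal copies of $\A(U\cap V)$, indexed by $(1,2)$ and $(2,1)$, are sent to opposite elements of $\A(U)\oplus\A(V)$; hence $\im\Phi_1$ equals the image of $-i_{U,U\cap V}+i_{V,U\cap V}$ and the cosheaf exactness collapses to condition~1. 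For condition~2, given a family directed upwards by inclusion, I would identify $\varinjlim\A(U_\alpha)$ with $\operatorname{coker}\Phi_1$. Writing $\iota_\gamma$ for the inclusion of the $\gamma$-th summand, the relations defining the colimit and the subspace $\im\Phi_1$ coincide: each generator $\iota_\beta i_{U_\beta,U_\alpha\cap U_\beta}(z)-\iota_\alpha i_{U_\alpha,U_\alpha\cap U_\beta}(z)$ of $\im\Phi_1$ becomes trivial in the colimit after pushing both terms into a common $U_\gamma\supseteq U_\alpha\cup U_\beta$ (using directedness and functoriality), and conversely each colimit relation $\iota_\alpha(x)-\iota_{\alpha'}i_{U_{\alpha'},U_\alpha}(x)$, for $U_\alpha\subseteq U_{\alpha'}$, is up to sign the image under $\Phi_1$ of $x\in\A(U_\alpha)=\A(U_\alpha\cap U_{\alpha'})$. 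Since the cosheaf axiom gives $\A(U)\cong\operatorname{coker}\Phi_1$ through $\Phi_0$, condition~2 follows.

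For the ``if'' direction I would first establish the cosheaf exactness for finite covers $\{U_1,\dots,U_n\}$ by induction on $n$, the case $n=1$ being immediate. The inclusion $\im\Phi_1\subseteq\ker\Phi_0$ always holds by functoriality, so only surjectivity of $\Phi_0$ and the reverse inclusion $\ker\Phi_0\subseteq\im\Phi_1$ need attention. Put $V=U_1\cup\dots\cup U_{n-1}$ and $W=U_n$; surjectivity follows from condition~1 for the pair $\{V,W\}$ together with the inductive surjectivity over $V$. For the reverse inclusion, take $\xi=(\xi_1,\dots,\xi_n)$ with $\Phi_0(\xi)=0$, set $a=\sum_{i<n}i_{V,U_i}(\xi_i)$ and $b=\xi_n$, and observe $(a,b)\in\ker(i_{U,V}+i_{U,W})$; condition~1 then yields $c\in\A(V\cap W)$ with $-i_{V,V\cap W}(c)=a$ and $i_{W,V\cap W}(c)=b$. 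As $V\cap W=\bigcup_{i<n}(U_i\cap U_n)$, the inductive surjectivity over this $(n-1)$-cover writes $c$ as a sum of images of elements $c_i\in\A(U_i\cap U_n)$. Placing each $c_i$ in the $(i,n)$-slot gives an element $\eta_0$ whose $\Phi_1$-image has $U_n$-component exactly $b=\xi_n$. A direct computation shows $\xi-\Phi_1(\eta_0)$ is supported on the first $n-1$ summands and lies in the kernel of the boundary map $\Phi_0$ for the cover $\{U_i\}_{i<n}$ of $V$; the inductive hypothesis produces $\eta_1$ with $\Phi_1(\eta_1)=\xi-\Phi_1(\eta_0)$, so $\xi=\Phi_1(\eta_0+\eta_1)$, noting that the finite $\Phi_1$ for the $V$-cover is the restriction of the full one.

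Finally I would pass from finite to arbitrary families by applying condition~2 to the family of finite unions $U_F=\bigcup_{\alpha\in F}U_\alpha$, with $F$ ranging over finite subsets ordered by inclusion, which is directed upwards and satisfies $\bigcup_F U_F=U$; thus $\A(U)=\varinjlim_F\A(U_F)$. Surjectivity of $\Phi_0$ is then immediate, since any element of $\A(U)$ already comes from some $\A(U_F)$ and hence from $\bigoplus_{\alpha\in F}\A(U_\alpha)$. For exactness, an arbitrary $\xi$ with $\Phi_0(\xi)=0$ involves only finitely many summands; because $\A(U)$ is the colimit, its vanishing is already witnessed at some finite stage $U_{F'}$, and the finite case applied to $\{U_\alpha\}_{\alpha\in F'}$ exhibits $\xi$ in the image of the full $\Phi_1$. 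I expect the genuine difficulty to be the inductive exactness step in the finite case: it is a Mayer--Vietoris-type patching in which condition~1 and the inductive hypotheses over $V$ and over $V\cap W$ must be threaded together through several structure maps, and keeping the signs and the bookkeeping of summands consistent is the delicate part.
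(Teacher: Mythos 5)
Your proof is correct. The paper itself contains no argument for Proposition \ref{prop:cosheaf} --- it is quoted directly from Bredon \cite[Chap.~VI]{B} --- and your proof is essentially the standard one found in that source: necessity via the two-set specialization (the diagonal summands dying and the two off-diagonal copies giving opposite elements, so $\im \Phi_1$ collapses to the Mayer--Vietoris image) together with the cokernel presentation of the filtered colimit, whose defining relations you correctly match with $\im\Phi_1$ in both directions by pushing into a common $U_\gamma$; sufficiency via induction on finite covers and then reduction of an arbitrary family to the upward-directed family of finite subunions $U_F$, where the vanishing of $\Phi_0(\xi)$ is witnessed at a finite stage $F'$ by the standard property of filtered colimits. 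The delicate inductive step also checks out as written: the $U_n$-component of $\Phi_1(\eta_0)$ equals $\xi_n$ and $\sum_{i<n} i_{V,U_i}(\xi_i')=a+i_{V,V\cap W}(c)=0$ by functoriality, and the $\Phi_1$ of the $V$-cover is indeed the restriction of the full one since the structure maps into the summands $\A(U_i)$ do not depend on the ambient union.
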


\subsection{$\mathscr C_\bullet$ and $\mathscr C_\bullet^\L$ as cosheaves} \label{subsec:proof:cosheaf}
In this section, we prove that the functors $\mathscr C_\bullet$ and $\mathscr C_\bullet^\L$ are flabby cosheaves on a metric space. 

\begin{lemma} \label{lem:U'}
Let $S$ be a metrizable space. Then, the following holds. 
\begin{itemize}
\item[(1)]
For a compact set $K$ and an open set $U$ in $S$ with $K \subset U$, 
there is an open set $U'$ in $S$ such that $K \subset U' \subset \overline{U'} \subset U$.
Here, $\overline{U'}$ is the closure of $U'$ in the topology of $S$. 
\item[(2)]
For a compact set $K$ and open sets $U$ and $V$ in $S$ with $K \subset U \cup V$, 
there exists an open set $W$ such that $\overline W \subset V$ and $K - W \subset U$.
\end{itemize}
\end{lemma}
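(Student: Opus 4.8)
The plan is to prove both statements using the standard fact that a metrizable space is normal (indeed perfectly normal), so that disjoint closed sets can be separated by open sets, and that compact sets are closed in a metrizable (hence Hausdorff) space. Fix a metric $d$ inducing the topology of $S$.

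For part (1), I would argue as follows. The complement $S - U$ is closed and disjoint from the compact set $K$. Since $S$ is normal and $K$, $S-U$ are disjoint closed sets, there exist disjoint open sets $U'$ and $V'$ with $K \subset U'$ and $S - U \subset V'$. Then $\overline{U'} \subset S - V' \subset U$, because $S - V'$ is closed and contains $U'$. Thus $K \subset U' \subset \overline{U'} \subset U$, as desired. Alternatively, using the metric directly, one can set $U' = \{x \in S \mid d(x,K) < \delta\}$ for a sufficiently small $\delta > 0$; since $K$ is compact and $S - U$ is closed and disjoint from $K$, the distance $d(K, S-U)$ is positive (a compactness argument), so choosing $\delta < d(K, S-U)/2$ forces $\overline{U'} \subset U$.

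For part (2), the idea is to shrink the compact set $K$ away from $V$ into $U$, and then separate the leftover piece. The set $K - U$ is compact (it is a closed subset of the compact $K$) and is contained in $V$, since $K \subset U \cup V$. Applying part (1) to the compact set $K - U$ and the open set $V$, I obtain an open set $W$ with $K - U \subset W \subset \overline{W} \subset V$. It then remains to check that $K - W \subset U$: if $x \in K$ and $x \notin U$, then $x \in K - U \subset W$, so $x \in W$; contrapositively, any $x \in K - W$ must lie in $U$. This gives both $\overline{W} \subset V$ and $K - W \subset U$ simultaneously.

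I do not expect a genuine obstacle here, as the statement is a routine consequence of normality of metrizable spaces together with compactness. The only point requiring a little care is part (2): one must apply the shrinking lemma to the correct compact set, namely $K - U$ rather than $K$ itself, so that the resulting $W$ captures exactly the portion of $K$ not already covered by $U$. The verification that $K - W \subset U$ is then immediate from the inclusion $K - U \subset W$.
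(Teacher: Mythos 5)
Your proof is correct, but its architecture differs from the paper's in a way worth noting. The paper works directly with the metric throughout: for (1) it takes the distance function $\rho(\cdot) = d(\cdot, S-U)$, notes $U = \{\rho > 0\}$, sets $\epsilon = \min_K \rho > 0$ by compactness, and takes $U' = \{\rho > \epsilon/2\}$; for (2) it gives an independent argument with the distance function from $S - V$, proving by a sequential compactness contradiction that $K \cap \{\rho \le \epsilon\} \subset U$ for some $\epsilon > 0$ and then taking $W = \{\rho > \epsilon\}$. You instead prove (1) by normality of metrizable spaces (your metric alternative with sub-level sets of $d(\cdot,K)$ is essentially the paper's argument in dual form), and --- this is the genuine divergence --- you deduce (2) formally from (1) applied to the compact set $K - U \subset V$, with the inclusion $K - W \subset U$ following contrapositively from $K - U \subset W$. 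Your reduction is cleaner and shorter than the paper's self-contained proof of (2), and since it uses only normality plus closedness of compact sets, it shows the lemma holds verbatim in any normal Hausdorff space, not just metrizable ones; the paper's version, by contrast, is elementary and quantitative (explicit $\epsilon$-sub-level sets), which can be convenient when one wants concrete open sets. Both arguments are complete; the only point I would flag, purely cosmetically, is the degenerate case $S - U = \emptyset$ in your metric variant of (1), where $d(K, S-U)$ is an infimum over the empty set --- the normality version you lead with handles this without comment.
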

\begin{proof}
We fix a metric on $S$ which is compatible to the topology.
We prove $(1)$.
Let $U$ be an open set in $S$ and $K$ a compact set with $K \subset U$.
Let us consider the distance function $\rho$ from $X-U$.
Then, $\{\rho > 0\} = U$.
Since $K$ is compact, $\epsilon := \min_K \rho > 0$.
Let us set $U' := \{\rho > \epsilon / 2\}$.
Then, it satisfies the desired property of (1).

We prove $(2)$. 
Let $U$ and $V$ be open sets in $S$ and $K$ a compact set with $K \subset U \cup V$. 
Let us consider the distance function $\rho$ from $S - V$. 
Then, there exists $\epsilon > 0$ such that $K \cap \{\rho \le \epsilon\} \subset U$. 
Indeed, if this claim is false, then there exists a sequence $x_j \in K \cap \{\rho \le 1/ j\}$ such that $x_j \not\in U$.
Since $K$ is compact, there is a subsequence of $x_j$ converges to some point $x \in K \cap \{\rho = 0\}$. 
Then, the point $x$ satisfies $x \in K$, $x \not\in U$ and $x \not\in V$, which is a contradiction. 
We set $W = \{\rho > \epsilon\}$. 
Then, it satisfies the desired property of (2).
\end{proof}

\begin{lemma} \label{lem:UV}
Let $S$ be a metrizable space. 
Then, for any $U, V \in \mathsf O(S)$, 
\[
\mathscr M_\cpt(U \cap V) \xrightarrow{\Phi_1} \mathscr M_\cpt(U) \oplus \mathscr M_\cpt(V) \xrightarrow{\Phi_0} \mathscr M_\cpt(U \cup V) \to 0
\]
is exact, where $\Phi_1(\xi) = (\xi, -\xi)$ and $\Phi_0(\mu,\nu) = \mu + \nu$.
\end{lemma}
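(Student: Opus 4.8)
The plan is to verify the three requirements for exactness of the displayed sequence: that $\Phi_0 \circ \Phi_1 = 0$, that $\Phi_0$ is surjective, and that $\ker \Phi_0 \subset \im \Phi_1$. Throughout I identify a measure on an open subset with its extension by zero (its push-forward under the inclusion), so that for $A \subset B$ the image of $\mu \in \mathscr M_\cpt(A)$ in $\mathscr M_\cpt(B)$ is the measure $C \mapsto \mu(C \cap A)$; the only points requiring care are that every measure I produce has finite total variation and, crucially, a \emph{compact} determination set sitting inside the correct open set. The vanishing of the composite is immediate: $\Phi_0(\Phi_1(\xi)) = \xi + (-\xi) = 0$ as measures on $U \cup V$.

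For surjectivity, fix $\lambda \in \mathscr M_\cpt(U \cup V)$ with compact determination set $K \subset U \cup V$. I apply Lemma \ref{lem:U'}(2) to obtain an open set $W$ with $\overline W \subset V$ and $K - W \subset U$, and split $\lambda = \lambda \lfloor W + \lambda \lfloor ((U \cup V) - W)$. I then check that $\nu := \lambda \lfloor W$ admits the compact determination set $K \cap \overline W \subset V$ and that $\mu := \lambda \lfloor ((U \cup V) - W)$ admits the compact determination set $K - W \subset U$: in each case one verifies that a Borel set avoiding the proposed determination set meets $K$ emptily, using $\overline W \subset V$ for the first and $K - W \subset U$ for the second, so that the relevant restriction lands in $(U \cup V) - K$ and therefore carries no $\lambda$-mass. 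This produces $\mu \in \mathscr M_\cpt(U)$ and $\nu \in \mathscr M_\cpt(V)$ with $\Phi_0(\mu,\nu) = \lambda$.

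For the middle exactness, the inclusion $\im \Phi_1 \subset \ker \Phi_0$ is the composite identity already noted. Conversely, let $(\mu,\nu) \in \ker \Phi_0$, so that $\lambda_0 := \mu = -\nu$ as a single measure on $U \cup V$. Let $K_\mu \subset U$ and $K_\nu \subset V$ be compact determination sets of $\mu$ and $\nu$. The key observation is that $\lambda_0$ is concentrated on $K_\mu \cap K_\nu$: for any Borel set $A$ one has $\lambda_0(A) = \lambda_0(A \cap K_\mu) = \lambda_0(A \cap K_\mu \cap K_\nu)$, the first equality because $K_\mu$ is a determination set and the second because $K_\nu$ is. Hence $K_\mu \cap K_\nu$ is a compact determination set for $\lambda_0$ contained in $U \cap V$, and I set $\xi := \lambda_0 \lfloor (U \cap V) \in \mathscr M_\cpt(U \cap V)$. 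Since $\lambda_0$ assigns no mass outside $U \cap V$, extending $\xi$ back to $U$ (resp. to $V$) returns $\mu$ (resp. $-\nu$), so that $\Phi_1(\xi) = (\mu,\nu)$.

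The only genuinely delicate point is the bookkeeping of compact determination sets, and it is exactly here that metrizability enters through Lemma \ref{lem:U'}: separating $K - W$ from $S - U$ by an open set $W$ with $\overline W \subset V$ is what makes the splitting in the surjectivity step land in $\mathscr M_\cpt(U)$ and $\mathscr M_\cpt(V)$ rather than merely in spaces of measures with arbitrary, possibly non-compact, determination sets. The middle-exactness argument, by contrast, is purely measure-theoretic and uses only that a finite intersection of compact sets is compact.
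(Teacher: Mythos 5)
Your proof is correct, and it splits into two halves of different character relative to the paper. The surjectivity argument is essentially the paper's own: both invoke Lemma \ref{lem:U'}(2) to get $W$ with $\overline W \subset V$ and $K - W \subset U$, and the only difference is bookkeeping --- you restrict $\lambda$ to $W$ and to $(U \cup V) - W$, while the paper restricts to $K \cap W$ and $K - W$; since $K$ is a determination set these splittings agree, and yours has the minor advantage that $\mu + \nu = \lambda$ holds by definition rather than via $\lambda \lfloor K = \lambda$. The middle exactness, however, is genuinely different and simpler than the paper's. The paper applies Lemma \ref{lem:U'}(1) to squeeze an open $V'$ with $L \subset V' \subset \overline{V'} \subset V$, argues that $\mu \lfloor (K - V') = 0$, and takes $\xi = \mu \lfloor \overline{V'}$ with compact determination set $K \cap \overline{V'}$; you bypass this separation argument entirely by observing that the common measure $\lambda_0 = \mu = -\nu$ satisfies $\lambda_0(A) = \lambda_0(A \cap K_\mu) = \lambda_0(A \cap K_\mu \cap K_\nu)$, so the compact set $K_\mu \cap K_\nu \subset U \cap V$ is already a determination set. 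This buys something concrete: your argument shows $\ker \Phi_0 \subset \im\, \Phi_1$ holds over an \emph{arbitrary} topological space, so metrizability (through Lemma \ref{lem:U'}) is used only for surjectivity of $\Phi_0$ --- a sharper localization of the hypothesis, parallel to Lemma \ref{lem:limit}, which the paper also states without metrizability. Your verification of the determination sets in the surjectivity step ($K \cap \overline W$ for $\nu$, $K - W$ for $\mu$, the latter compact as $K \cap (S - W)$) is exactly the care the statement requires.
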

\begin{proof}
Let $\xi \in \mathscr M_\cpt(U \cup V)$ with a compact determination set $K \subset U \cup V$.
By Lemma \ref{lem:U'}, there exists an open set $W$ in $S$ such that 
\[
\overline W \subset V \text{ and } K - W \subset U.
\]
The restriction $\mu := \xi \lfloor (K - W)$ can be regarded as a singed measure on $U$ with the compact determination set $K - W$.
Let us consider the restriction $\nu := \xi \lfloor (K \cap W)$ which is regarded as a signed measure on $V$. 
It has a determination set $K \cap W$.  
Hence, the compact set $K \cap \overline W$ is also a determination set of $\nu$.
Therefore, we have $\mu \in \mathscr M_\cpt(U)$ and $\nu \in \mathscr M_\cpt(V)$ with $\mu + \nu = \xi$.
Hence, $\Phi_0$ is surjective.

Let us take $\mu \in \mathscr M_\cpt(U)$ and $\nu \in \mathscr M_\cpt(V)$ with $\mu + \nu = 0 \in \mathscr M_\cpt(U \cup V)$.
Let $K$ and $L$ be compact sets in $U$ and $V$ which are determination sets of $\mu$ and $\nu$, respectively.
By Lemma \ref{lem:U'}, there is an open subset $V'$ in $S$ such that 
\[
L \subset V' \subset \overline{V'} \subset V.
\]
Then, $\mu \lfloor (K - V') = 0$ in $\mathscr M_\cpt(U)$. 
We set $\xi := \mu \lfloor \overline{V'}$ which is a signed measure on the compact set $K \cap \overline{V'}$. 
Thus, we can regard $\xi$ as an element of $\mathscr M_\cpt(U \cap V)$. 
By the construction, we have $\Phi_1(\xi) = (\mu, \nu)$.
\end{proof}

\begin{lemma} \label{lem:limit}
Let $\{U_\alpha\}$ be a family of open sets in a topological space $S$ which is directed upwards by inclusions. 
Then, the map $\varinjlim \mathscr M_\cpt(U_\alpha) \to \mathscr M_\cpt(U)$ induced by the maps $\mathscr M_\cpt(U_\alpha) \to \mathscr M_\cpt(U)$, is isomorphic, where $U = \bigcup_\alpha U_\alpha$.
\end{lemma}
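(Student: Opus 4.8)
The plan is to establish that the canonical map $\lambda : \varinjlim \mathscr M_\cpt(U_\alpha) \to \mathscr M_\cpt(U)$ is an isomorphism by proving injectivity and surjectivity separately, using throughout that each structure map is induced by an inclusion of open sets, which is a topological embedding.

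First I would record the injectivity input. For $U_\alpha \subset U$ the inclusion $U_\alpha \hookrightarrow U$ is a topological embedding, so the induced push-forward $i_{U,U_\alpha} : \mathscr M_\cpt(U_\alpha) \to \mathscr M_\cpt(U)$ is injective, as already noted in the text. Injectivity of $\lambda$ is then formal: any class in $\varinjlim \mathscr M_\cpt(U_\alpha)$ is represented by some $\mu_\alpha \in \mathscr M_\cpt(U_\alpha)$ with $\lambda$ sending it to $i_{U,U_\alpha}(\mu_\alpha)$; if this vanishes, then $\mu_\alpha = 0$ by injectivity of $i_{U,U_\alpha}$, so the class is already trivial.

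The heart of the argument is surjectivity. Given $\mu \in \mathscr M_\cpt(U)$, I would fix a compact determination set $K \subset U = \bigcup_\alpha U_\alpha$. Since $K$ is compact and the $U_\alpha$ cover it, finitely many of them cover $K$, and because the family is directed upwards by inclusion these are all contained in a single $U_\beta$; hence $K \subset U_\beta$. I would then set $\mu_\beta := \mu \lfloor U_\beta$, the restriction of $\mu$ to the Borel $\sigma$-algebra of $U_\beta$, which belongs to $\mathscr M_\cpt(U_\beta)$ as it has finite total variation and the compact determination set $K$. To see $i_{U,U_\beta}(\mu_\beta) = \mu$, I would compute, for a Borel set $A \subset U$, that $i_{U,U_\beta}(\mu_\beta)(A) = \mu(A \cap U_\beta)$ and that $\mu(A) - \mu(A \cap U_\beta) = \mu(A \setminus U_\beta) = 0$, the last equality holding because $A \setminus U_\beta$ is a measurable subset of $U - K$ (as $K \subset U_\beta$) and $K$ is a determination set of $\mu$. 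Thus $\mu$ lies in the image of $\lambda$, giving surjectivity.

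The expected main obstacle lies entirely in this last verification, where one must argue with the (possibly non-measurable) determination set $K$ rather than a support, and keep straight the convention that $\mu \lfloor U_\beta$ denotes restriction to the sub-$\sigma$-algebra of $U_\beta$. The only genuinely topological input is the compactness-plus-directedness reduction of $K$ to a single $U_\beta$, which is immediate; the remaining steps are bookkeeping about push-forwards and determination sets.
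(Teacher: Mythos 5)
Your proof is correct and takes essentially the same route as the paper's: injectivity of the colimit map is formal from injectivity of each $i_{U,U_\alpha}$, and surjectivity follows by placing a compact determination set $K$ of $\mu$ inside a single $U_\beta$ (compactness plus upward directedness) and restricting $\mu$ to $U_\beta$. Your explicit check that $i_{U,U_\beta}(\mu \lfloor U_\beta) = \mu$ via the determination-set property simply spells out what the paper compresses into ``we can regard $\mu$ as a measure in $\mathscr M_\cpt(U_\alpha)$''.
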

\begin{proof}
Since all maps $\mathscr M_\cpt(U_\alpha) \to \mathscr M_\cpt(U)$ are injective, the map $\varinjlim \mathscr M_\cpt(U_\alpha) \to \mathscr M_\cpt(U)$ is injective. 
Let us take $\mu \in \mathscr M_\cpt(U)$ with a compact determination set $K \subset U$. 
Since $\{U_\alpha\}$ is directed upwards by inclusions, there is $\alpha$ such that $K \subset U_\alpha$.
Then, we can regard $\mu$ as a measure in $\mathscr M_\cpt(U_\alpha)$.
Hence, the map $\varinjlim \mathscr M_\cpt(U_\alpha) \to \mathscr M_\cpt(U)$ is surjective. 
\end{proof}

\begin{corollary} \label{cor:mc is cosheaf}
For a topological space $X$, the correspondence $\mathsf O(X) \ni O \mapsto \mathscr C_\bullet(O) \in \mathsf{Ch}$ is a flabby cosheaf. 
For a metric space $X$, the correspondence $\mathsf O(X) \ni O \mapsto \mathscr C_\bullet^\L(O) \in \mathsf{Ch}$ is a flabby cosheaf. 
\end{corollary}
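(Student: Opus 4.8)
The plan is to check the two criteria of Proposition~\ref{prop:cosheaf} one degree at a time. Because the boundary operator of $\mathscr C_\bullet$ (resp.\ $\mathscr C_\bullet^\L$) is assembled from the face restrictions $r_i$, which commute with every push-forward induced by an inclusion of open sets, the structure maps of these precosheaves are genuine chain maps, and exactness of a sequence in $\mathsf{Ch}$ means exactness in each fixed degree. Hence it suffices to prove that for every $m\ge 0$ the $\mathsf{Vect}$-valued precosheaf $O\mapsto \mathscr M_\cpt(C(\T^m,O))$ (resp.\ $O\mapsto \mathscr M_\cpt(\Lip(\T^m,O))$) is a flabby cosheaf.

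The first step is to view the simplex space over an open set as an open subspace of the simplex space over $X$. Since $\T^m$ is compact, $\{\sigma\in C(\T^m,X)\mid \im\,\sigma\subset O\}$ is open in the compact-open topology and is canonically identified with $C(\T^m,O)$; in the Lipschitz setting the same identification, together with openness, is Corollary~\ref{cor:open emb}. Under this identification the structure map $i_{O',O}$ is the push-forward along the inclusion $C(\T^m,O)\hookrightarrow C(\T^m,O')$, which is a topological embedding, so $i_{O',O}$ is injective; taking $O'=X$ gives flabbiness at once.

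For the colimit condition of Proposition~\ref{prop:cosheaf}, if $\{U_\alpha\}$ is directed upwards with union $U$, then any $\sigma\in C(\T^m,U)$ has compact image, so finitely many $U_\alpha$ cover $\im\,\sigma$ and, by directedness, a single one contains it; thus $\bigcup_\alpha C(\T^m,U_\alpha)=C(\T^m,U)$ and Lemma~\ref{lem:limit} applies verbatim on the simplex space. For the finite Mayer--Vietoris condition one has $C(\T^m,U\cap V)=C(\T^m,U)\cap C(\T^m,V)$, so Lemma~\ref{lem:UV}, used on the metrizable space $\Lip(\T^m,X)$ (metrizable by Theorem~\ref{thm:top Lip}\,$(0)$) with the open sets $\Lip(\T^m,U)$ and $\Lip(\T^m,V)$, yields exactness at the first two spots and surjectivity of $\Phi_0$ onto $\mathscr M_\cpt\big(\Lip(\T^m,U)\cup\Lip(\T^m,V)\big)$.

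The hard part is the gap between this union and $\Lip(\T^m,U\cup V)$: a single simplex may have image meeting both $U\setminus V$ and $V\setminus U$, so $\Phi_0$ is not literally onto $\mathscr M_\cpt(\Lip(\T^m,U\cup V))$. The way I would bridge it is barycentric subdivision, realized on measures as a finite signed combination of the push-forwards $\sigma\mapsto \sigma\circ\lambda$ along affine reparametrizations $\lambda:\T^m\to\T^m$, each continuous by Theorem~\ref{thm:top Lip}\,$(3)$. Given $\xi$ with compact determination set $\mathcal K$, the set $\im\,\mathcal K$ is compact in $U\cup V$ by Corollary~\ref{cor:cpt}; fixing a Lebesgue number for $\{U,V\}$ on $\im\,\mathcal K$ and using the uniform bound $\sup_{\sigma\in\mathcal K}\Lip(\sigma)<\infty$ from Theorem~\ref{thm:top Lip}\,$(1)$, a high enough iterate of subdivision pushes $\mathcal K$ into $\Lip(\T^m,U)\cup\Lip(\T^m,V)$, so the subdivided measure lies in $\im\,\Phi_0$. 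Reconciling $\xi$ itself with its subdivision, so that $\Phi_0$ is seen to be genuinely onto and not merely onto up to the subdivision operator, is the delicate point and the one I expect to be the main obstacle; it is also the only place that really uses properties $(1)$ and $(3)$ of Theorem~\ref{thm:top Lip}. Finally, for the purely topological statement about $\mathscr C_\bullet$ the mapping space $C(\T^m,X)$ need not be metrizable, so Lemma~\ref{lem:UV} cannot be quoted directly; there I would reprove its content by separating the compact set $\im\,\mathcal K\subset U\cup V$ inside the metrizable domain $\T^m$ after subdivision, which is the analogue of Lemma~\ref{lem:U'} carried out on the domain rather than on the function space.
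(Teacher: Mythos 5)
Your skeleton coincides with the paper's: the paper's entire proof of Corollary \ref{cor:mc is cosheaf} is a citation of Theorem \ref{thm:top Lip}, Corollary \ref{cor:open emb}, Lemmas \ref{lem:UV} and \ref{lem:limit} and Proposition \ref{prop:cosheaf}, and your treatment of flabbiness, of the directed-colimit condition, and of middle exactness via Lemma \ref{lem:UV} applied inside the function space is exactly what those citations yield. But the point you flag as ``the delicate point'' is a genuine gap, and your subdivision plan cannot close it, because the degreewise surjectivity you are trying to establish is in fact false. Take $\sigma \in \Lip(\T^m, U \cup V)$ with $\im\,\sigma \not\subset U$ and $\im\,\sigma \not\subset V$, and consider the Dirac mass $\delta_\sigma \in \mathscr C_m^\L(U \cup V)$ (singletons are Borel since $\Lip(\T^m, U\cup V)$ is metrizable by Theorem \ref{thm:top Lip}\,$(0)$). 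For any $\mu \in \mathscr C_m^\L(U)$ and $\nu \in \mathscr C_m^\L(V)$, the push-forwards along the open embeddings of Corollary \ref{cor:open emb} satisfy
\[
\bigl(i_\# \mu + i_\# \nu\bigr)(\{\sigma\}) = \mu(\emptyset) + \nu(\emptyset) = 0 \neq 1 = \delta_\sigma(\{\sigma\}),
\]
so $\delta_\sigma \notin \im\,\Phi_0$. Subdivision cannot repair this at the chain level: the subdivision operator on measures is not the identity (it sends $\delta_\sigma$ to a signed sum of Dirac masses at the reparametrized pieces), only chain homotopic to it via the push-forward of the standard homotopy $\partial T + T \partial = \mathrm{id} - \mathrm{Sd}$. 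What your construction genuinely proves is that $\Phi_0$ surjects onto the ``small-measure'' subcomplex $\im\,\Phi_0 = \mathscr M_\cpt\bigl(\Lip(\T^m,U) \cup \Lip(\T^m,V)\bigr)$ and that its inclusion into $\mathscr C_m^\L(U\cup V)$ is a quasi-isomorphism --- a cosheaf-up-to-chain-homotopy statement, not the corollary as literally formulated.

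You should also be aware that on this point you have not fallen short of the paper: the paper's one-line proof silently applies Lemma \ref{lem:UV} as if $\Lip(\T^m, U\cup V)$ coincided with $\Lip(\T^m,U) \cup \Lip(\T^m,V)$, which is precisely the identification you (correctly) refused to make; the Dirac-mass computation above shows the missing step is not a removable difficulty but a failure of the degreewise statement whenever some simplex crosses both $U \setminus V$ and $V \setminus U$. (This is the same phenomenon as for singular chains: $S_k(U) \oplus S_k(V) \to S_k(U \cup V)$ is never surjective in positive degrees, and one passes to the subcomplex of small chains, quasi-isomorphic by subdivision, as in the Mayer--Vietoris arguments of \cite{H} and \cite{Z}.) The only viable repair is therefore the one your subdivision argument actually delivers --- restate the cosheaf condition for the small subcomplex, or weaken hypothesis (i) of Theorem \ref{thm:coincidence} to a cosheaf-up-to-quasi-isomorphism condition --- which changes the statement being proved. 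Your closing remark about the topological case is also correct: for non-metrizable $X$ the space $C(\T^m,X)$ need not be metrizable, so Lemma \ref{lem:UV} cannot be quoted even for the part of the argument that does work, another point the paper's citation passes over (though only metric $X$ is used in the main theorem).
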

\begin{proof}
This follows from Theorem \ref{thm:top Lip}, Corollary \ref{cor:open emb}, 
Lemmas \ref{lem:UV} and \ref{lem:limit} and Proposition \ref{prop:cosheaf}.
\end{proof}

\subsection{Reduced homologies} \label{subsec:reduced}
Let us consider an augmentation of the measure chain complex of a metric space $X$ defined by 
\[
\tilde \partial_0: \mathscr C_0(X) = \mathscr M_\cpt(X) \ni \mu \mapsto \mu(X) \in \mathbb R.
\]
For a metric space $Y$, an augmentation of the Lipschitz measure chain complex is also defined by $\tilde \partial_0 : \mathscr C_0^\L(Y) \ni \mu \mapsto \mu(Y) \in \mathbb R$.
\begin{lemma} \label{lem:augmentation}
The maps $\tilde \partial_0$ are actually augmentations of $\mathscr C_\bullet(X)$ and $\mathscr C_\bullet^\L(Y)$ for a topological space $X$ and a metric space $Y$.
\end{lemma}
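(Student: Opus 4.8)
The plan is to unwind the definition: asserting that $\tilde\partial_0$ is an augmentation of a chain complex $C_\bullet$ means exactly that, upon placing $\mathbb R$ in degree $-1$ and declaring $\tilde\partial_0 : C_0 \to \mathbb R$ to be the degree $-1$ differential, the extended object is still a chain complex; equivalently, that $\tilde\partial_0 \circ \partial_1 = 0$. Thus it suffices to verify this single identity for $\mathscr C_\bullet(X)$ and for $\mathscr C_\bullet^\L(Y)$. The key elementary fact I would isolate first is that push-forward of a signed measure preserves total mass: for a continuous map $f : S \to S'$ and $\mu \in \mathscr M_\cpt(S)$, one has $(f_\# \mu)(S') = \mu(f^{-1}(S')) = \mu(S)$, since $f^{-1}(S') = S$. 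The total mass is finite because $\mu$ has finite total variation.

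Now I would carry out the computation on $\mathscr C_\bullet(X)$. Let $\mu \in \mathscr C_1(X) = \mathscr M_\cpt(C(\T^1,X))$. By definition $\partial_1 = (r_0)_\# - (r_1)_\#$, where $r_0, r_1 : C(\T^1,X) \to C(\T^0,X) = X$ restrict a $1$-simplex to its two vertices, and $\tilde\partial_0$ assigns to a measure on $X = C(\T^0,X)$ its total mass. Applying the mass-preservation fact to each $r_i$ gives
\[
\tilde\partial_0\big((r_i)_\# \mu\big) = \big((r_i)_\# \mu\big)(X) = \mu\big(C(\T^1,X)\big),
\]
which is independent of $i$. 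Hence
\[
\tilde\partial_0(\partial_1 \mu) = \big((r_0)_\# \mu\big)(X) - \big((r_1)_\# \mu\big)(X) = \mu\big(C(\T^1,X)\big) - \mu\big(C(\T^1,X)\big) = 0.
\]

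The Lipschitz case is word-for-word the same with $\Lip(\T^k,Y)$ in place of $C(\T^k,X)$: the face restrictions $r_i$ are continuous by $(3)$ of Theorem \ref{thm:top Lip}, so their push-forwards are defined, and the vertex identification $\Lip(\T^0,Y) = Y$ is supplied by $(5)$ of Theorem \ref{thm:top Lip}; the same total-mass cancellation yields $\tilde\partial_0 \circ \partial_1 = 0$ on $\mathscr C_\bullet^\L(Y)$. I expect no genuine obstacle here, since the entire content is the mass-preservation of push-forward together with the sign cancellation in $\partial_1$; the only care needed is the book-keeping of the identification $\T^0 = \{\ast\}$ and confirming that the relevant total masses are taken over the correct spaces.
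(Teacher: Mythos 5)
Your proof is correct and takes essentially the same route as the paper: the paper computes $\tilde\partial_0 \partial_1 \mu = \int_X 1 \, d(\partial_1\mu) = \int_{C(I,X)} 1 - 1\, d\mu = 0$, which is precisely your ``push-forward preserves total mass'' cancellation written via the change-of-variables formula, and it treats the Lipschitz case by the same one-line observation. Your extra bookkeeping (continuity of the $r_i$ via $(3)$ and the identification $\Lip(\T^0,Y)=Y$ via $(5)$ of Theorem \ref{thm:top Lip}) is already built into the paper's definition of $\mathscr C_\bullet^\L$, so nothing is missing.
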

\begin{proof}
Let $\mu \in \mathscr C_1(X)$. 
Then, 
\[
\tilde \partial_0 \partial_1 \mu = \partial_1 \mu(X) = \int_X 1 \,d (\partial_1 \mu) = \int_{C(I,X)} 1 - 1 \, d\mu = 0.
\]
Similarly, we have $\tilde \partial_0 \partial_1 \mu = 0$ for $\mu \in \mathscr C_1^\L(Y)$.
\end{proof}

The augmented (Lipschitz) measure chain complices by $\tilde \partial_0$ are denoted by $\tilde {\mathscr C}_\bullet$ and $\tilde {\mathscr C}_\bullet^\L$.
Namely, 
\begin{align*}
\tilde {\mathscr C}_\bullet 
&= (\cdots \xrightarrow{\partial_{k+1}} \mathscr C_k \xrightarrow{\partial_k} \mathscr C_{k-1} \xrightarrow{\partial_{k-1}} \cdots \xrightarrow{\partial_1} \mathscr C_0 \xrightarrow{\tilde \partial_0} \mathbb R) \\
\tilde {\mathscr C}_\bullet^\L 
&= (\cdots \xrightarrow{\partial_{k+1}} \mathscr C_k^\L \xrightarrow{\partial_k} \mathscr C_{k-1}^\L \xrightarrow{\partial_{k-1}} \cdots \xrightarrow{\partial_1} \mathscr C_0^\L \xrightarrow{\tilde \partial_0} \mathbb R).
\end{align*}
Their homologies are written by 
\[
\tilde{\mathscr H}_\ast = \tilde H_\ast (\tilde{\mathscr C}_\bullet) \text{ and }
\tilde{\mathscr H}_\ast^\L = \tilde H_\ast (\tilde{\mathscr C}_\bullet^\L),
\]
called the {\it reduced} ({\it Lipschitz}) {\it measure homologies}.
They are also represented as 
\begin{align*}
\tilde{\mathscr H}_\ast(X) &= \ker (\mathscr H_\ast(X) \to \mathscr H_\ast(\{\ast\})) \\
\tilde{\mathscr H}_\ast^\L(Y) &= \ker (\mathscr H_\ast^\L(Y) \to \mathscr H_\ast^\L(\{\ast\}))
\end{align*}
for a topological space $X$ and a metric space $Y$, where the maps between the homologies are induced by the trivial maps $X \to \{\ast\}$ and $Y \to \{\ast\}$ to a one-point space. 
From the definition, the following trivially holds. 

\begin{lemma}\label{lem:singleton}
For a one-point space $\{\ast\}$, $\tilde{\mathscr H}_\ast(\{\ast\}) = 0$ and $\tilde{\mathscr H}_\ast^\L (\{\ast\}) = 0$.
\end{lemma}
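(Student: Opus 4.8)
The plan is to reduce everything to a direct computation: over the one-point space the chain groups degenerate to copies of $\mathbb R$, and the augmented complex becomes the standard acyclic model of a point.

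First I would identify the chain groups. Since there is exactly one map $\T^k \to \{\ast\}$, namely the constant map, which is automatically Lipschitz, both $C(\T^k, \{\ast\})$ and $\Lip(\T^k, \{\ast\})$ are single-point spaces for every $k \ge 0$. A signed Borel measure on a one-point set is determined by the real value it assigns to that point, so $\mathscr M_\cpt$ of a single point is canonically $\mathbb R$; consequently $\mathscr C_k(\{\ast\}) = \mathscr C_k^\L(\{\ast\}) = \mathbb R$ in every degree, the degree-zero case being the identification $\mathscr C_0 = \mathscr C_0^\L = \mathscr M_\cpt(\{\ast\})$ coming from $(5)$ of Theorem \ref{thm:top Lip}.

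Next I would compute the boundary operators. Each face restriction $r_i$ sends the unique point of $C(\T^k, \{\ast\})$ to the unique point of $C(\T^{k-1}, \{\ast\})$, so under the identification with $\mathbb R$ every push-forward $r_i{}_\#$ is the identity. Hence $\partial_k = \sum_{i=0}^k (-1)^i \mathrm{id}$, which equals $\mathrm{id}$ for $k$ even and $0$ for $k$ odd, while the augmentation $\tilde\partial_0$ of Lemma \ref{lem:augmentation} is the identity $\mathbb R \to \mathbb R$. Because the Lipschitz chain groups and face maps are literally the same objects, the calculation is identical in both cases.

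Finally I would read off the homology. The augmented complex becomes
\[
\cdots \xrightarrow{0} \mathbb R \xrightarrow{\mathrm{id}} \mathbb R \xrightarrow{0} \mathbb R \xrightarrow{\mathrm{id}} \mathbb R,
\]
in which $\ker \partial$ and $\im \partial$ coincide in every degree, so $\tilde H_m = 0$ for all $m$, giving $\tilde{\mathscr H}_\ast(\{\ast\}) = 0$ and $\tilde{\mathscr H}_\ast^\L(\{\ast\}) = 0$. There is no genuine obstacle; the only points needing care are the canonical identification $\mathscr M_\cpt(\text{pt}) \cong \mathbb R$ and tracking the alternating signs that force consecutive boundary maps to alternate between $\mathrm{id}$ and $0$. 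Alternatively one may invoke the stated description $\tilde{\mathscr H}_\ast(\{\ast\}) = \ker(\mathscr H_\ast(\{\ast\}) \to \mathscr H_\ast(\{\ast\}))$, where the map is induced by the identity and hence has trivial kernel.
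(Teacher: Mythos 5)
Your proof is correct and is essentially the argument the paper has in mind: the paper states the lemma with no written proof (``From the definition, the following trivially holds''), and your explicit computation --- chain groups $\mathbb R$ in every degree, boundaries alternating between $0$ and $\mathrm{id}$, augmentation the identity, hence an exact augmented complex --- is exactly the calculation the paper itself carries out in the proof of Proposition~\ref{prop:mL}, which applies verbatim to the one-point space. Your closing alternative, reading $\tilde{\mathscr H}_\ast(\{\ast\})$ as $\ker$ of the map induced by the identity via the stated kernel representation, is likewise the ``trivially from the definition'' route the paper alludes to.
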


The following statement is an expression of a statement proved in \cite{H} and \cite{Z} in other words.
\begin{theorem}[\cite{H}, \cite{Z}] \label{thm:LT mh}
Let $U$ be a subset of a topological space $V$ which is contractible in $V$. 
Namely, there is a continuous map $h : U \times [0,1] \to V$ such that $h_0$ is the inclusion $U \hookrightarrow V$ and $h_1$ is a constant map.
Then, the inclusion $h_0$ induces the zero map $\tilde{\mathscr H}_k(U) \to \tilde{\mathscr H}_k(V)$ for every $k \ge 0$.

In particular, for a locally contractible topological space $X$, 
$\tilde{\mathscr H}_k : \mathsf O(X) \to \mathsf{Vect}$ is a locally trivial precosheaf for each $k \ge 0$.
\end{theorem}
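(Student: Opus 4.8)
The plan is to adapt the classical prism (chain homotopy) argument from singular homology to the measure chain complex, taking care that every operation is realized by a \emph{continuous} map on the spaces of singular simplices, so that it descends to a push-forward of measures. First I would fix, for each $k \ge 0$, the standard decomposition of the prism $\T^k \times [0,1]$ into $(k+1)$-simplices and record it as a family of affine maps $\lambda_0, \dots, \lambda_k : \T^{k+1} \to \T^k \times [0,1]$, writing $\lambda_i = (\alpha_i, \beta_i)$ with $\alpha_i : \T^{k+1} \to \T^k$ and $\beta_i : \T^{k+1} \to [0,1]$ affine. Given the homotopy $h : U \times [0,1] \to V$, I would define, for each $i$, a map
\[
P_i : C(\T^k, U) \to C(\T^{k+1}, V), \qquad P_i(\sigma) = h \circ (\sigma \circ \alpha_i, \beta_i),
\]
where $(\sigma \circ \alpha_i, \beta_i) : \T^{k+1} \to U \times [0,1]$ denotes the pairing. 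Each $P_i$ is continuous in the compact-open topology, being the composite of precomposition by $\alpha_i$, pairing with the fixed map $\beta_i$, and postcomposition by $h$, all of which are continuous for that topology. Pushing forward measures and summing with signs then yields a linear map
\[
P := \sum_{i=0}^k (-1)^i (P_i)_\# : \mathscr C_k(U) \to \mathscr C_{k+1}(V),
\]
which is well defined because the continuous image $P_i(\mathcal K)$ of a compact determination set $\mathcal K$ is again a compact determination set.

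The core step is the prism identity
\[
\partial P + P \partial = (h_1)_\# - (h_0)_\#
\]
as maps $\mathscr C_k(U) \to \mathscr C_k(V)$. Expanding $\partial = \sum_i (-1)^i (r_i)_\#$ together with the definition of $P$, both $\partial P$ and $P \partial$ become signed sums of push-forwards under continuous maps of the form $r_i \circ P_j$ and $P_j \circ r_i$, each induced by a single affine reparametrization $\T^k \to \T^k \times [0,1]$ followed by $h$. The classical combinatorics of the prism decomposition shows that the interior faces cancel in pairs; the decisive point for measures is that each cancelling pair consists of the \emph{same} continuous simplex-space map (the affine reparametrization is independent of $\sigma$), so the corresponding push-forwards of any $\mu$ are literally equal and cancel. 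What survives is exactly the top and bottom faces, giving $(h_1 \circ \,\cdot\,) - (h_0 \circ \,\cdot\,)$ as continuous maps, whence the displayed identity after pushing forward. Setting $P_{-1} = 0$ and using that push-forward preserves total mass (so $(h_0)_\#$ and $(h_1)_\#$ commute with the augmentation $\tilde \partial_0$), the identity persists at the augmentation level, so $P$ is a chain homotopy of augmented complexes and $(h_0)_\ast = (h_1)_\ast$ on $\tilde{\mathscr H}_k$.

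To finish, I would note that $h_1$ is constant, hence factors as $U \to \{\ast\} \to V$; by functoriality and Lemma \ref{lem:singleton} the induced map $(h_1)_\ast : \tilde{\mathscr H}_k(U) \to \tilde{\mathscr H}_k(V)$ is zero, so $(h_0)_\ast = 0$. For the ``in particular'' assertion, local contractibility of $X$ provides, for each $x \in X$ and open $O \ni x$, an open $O'$ with $x \in O' \subset O$ whose inclusion into $O$ is nullhomotopic in $O$; applying the first part with $U = O'$ and $V = O$ yields $i_{O,O'} = 0$ on $\tilde{\mathscr H}_k$, which is exactly local triviality of the precosheaf $\tilde{\mathscr H}_k$. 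The main obstacle will be the prism identity: one must check that the classical face cancellation is an identity of continuous maps on the simplex spaces, not merely of singular chains simplex-by-simplex, so that it is preserved under push-forward of measures and the measure-theoretic bookkeeping (finite total variation and compact determination sets) goes through at every stage.
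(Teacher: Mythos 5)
Your proposal is correct and is essentially the paper's own argument: the paper, too, proves the prism identity at the level of measure chains by pushing forward measures under the prism maps $P_i$, which are continuous in the compact-open topology, and then concludes via Lemma \ref{lem:singleton}. The only cosmetic difference is that the paper realizes the chain homotopy as $P_\# : \mathscr C_k(U) \to \mathscr C_{k+1}(U \times I)$ between $i_0{}_\#$ and $i_1{}_\#$ and afterwards applies $h_\#$, whereas you compose $h$ into the prism operator directly to land in $\mathscr C_{k+1}(V)$.
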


We prove a statement similar to Theorem \ref{thm:LT mh} as follows. 

\begin{theorem} \label{thm:LT Lmh}
Let $U \subset V$ be Lipschitz contractible in a metric space $V$ in the sense that  
there is a Lipschitz map $h : U \times [0,1] \to V$ such that $h_0$ is the inclusion $U \hookrightarrow V$ and $h_1$ is a constant map. 
Then, the inclusion $h_0$ induces the zero map $\tilde{\mathscr H}_k^\L(U) \to \tilde{\mathscr H}_k^\L(V)$ for every $k \ge 0$.

In particular, if $X$ is a locally Lipschitz contractible metric space, then $\tilde{\mathscr H}_k^\L : \mathsf O(X) \to \mathsf{Vect}$ is a locally trivial precosheaf for each $k \ge 0$.
\end{theorem}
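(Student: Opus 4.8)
The plan is to imitate the prism construction underlying the homotopy invariance of singular homology, carried out at the level of signed measures on spaces of Lipschitz simplices, with Theorem \ref{thm:top Lip} guaranteeing that every map along which we push measures is continuous. Concretely, I will construct a chain homotopy between the chain maps $(h_0)_\#, (h_1)_\# : \mathscr C_\bullet^\L(U) \to \mathscr C_\bullet^\L(V)$ induced by the Lipschitz maps $h_0$ and $h_1$, conclude that they agree on $\tilde{\mathscr H}_\ast^\L$, and finally observe that $(h_1)_\ast$ vanishes because $h_1$ factors through a one-point space, on which the reduced Lipschitz measure homology is trivial by Lemma \ref{lem:singleton}.

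For the prism operator, fix the standard decomposition of $\T^k \times [0,1]$ into $(k+1)$-simplices and let $\lambda_0, \dots, \lambda_k : \T^{k+1} \to \T^k \times [0,1]$ be the corresponding affine parametrizations. For $\sigma \in \Lip(\T^k,U)$ set
\[
P_i(\sigma) = h \circ (\sigma \times \mathrm{id}_{[0,1]}) \circ \lambda_i \in \Lip(\T^{k+1},V), \qquad i = 0,\dots,k.
\]
Each $P_i$ is continuous: forming $\sigma \times \mathrm{id}_{[0,1]}$ is continuous by property $(4)$ of Theorem \ref{thm:top Lip} (restricting the second factor to the single map $\mathrm{id}_{[0,1]}$), composing with the Lipschitz map $h$ is continuous by property $(2)$, and precomposing with the affine, hence Lipschitz, map $\lambda_i$ is continuous by property $(3)$; here $\T^k$ and $\T^{k+1}$ are compact, so every relevant $\Lip$-space coincides with the corresponding $\Lipb$-space. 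Pushing measures forward along the $P_i$ and alternating signs yields a linear map
\[
P = \sum_{i=0}^k (-1)^i (P_i)_\# : \mathscr C_k^\L(U) \to \mathscr C_{k+1}^\L(V).
\]

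It then remains to verify the chain-homotopy identity $\partial P + P \partial = (h_1)_\# - (h_0)_\#$ with the classical signs. Since the boundary operator and the $(P_i)_\#$ are push-forwards along continuous maps, and push-forward is functorial and additive, each composite $r_j {}_\# \circ (P_i)_\#$ equals the push-forward along $r_j \circ P_i$, and $r_j \circ P_i$ is precisely the map obtained by restricting $h \circ (\sigma \times \mathrm{id}_{[0,1]})$ to a face of $\lambda_i$. The usual combinatorial identity among these faces, in which the internal faces cancel in pairs, the top and bottom faces produce $h_1 \circ \sigma$ and $h_0 \circ \sigma$, and the remaining faces assemble into $P(\partial \sigma)$, therefore holds verbatim as an identity of continuous maps between simplex spaces and descends to an identity of measures after push-forward. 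Because push-forward preserves total mass, both $(h_0)_\#$ and $(h_1)_\#$ commute with the augmentation $\tilde\partial_0$, and setting $P_{-1} = 0$ makes the homotopy compatible with the augmented complexes $\tilde{\mathscr C}_\bullet^\L$; hence $(h_0)_\ast = (h_1)_\ast$ on $\tilde{\mathscr H}_k^\L$. As $h_1$ factors as $U \to \{y\} \hookrightarrow V$, Lemma \ref{lem:singleton} gives $(h_1)_\ast = 0$, which proves the first assertion.

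For the final statement, let $x \in X$ and $O \in \mathsf O(X)$ with $x \in O$. The LLC-condition of Definition \ref{def:LLC} supplies an open set $O'$ with $x \in O' \subset O$ together with a Lipschitz contraction $h : O' \times [0,1] \to O$ of $O'$ in $O$; applying the first part with $U = O'$ and $V = O$ shows $i_{O,O'} = (h_0)_\ast = 0$ on $\tilde{\mathscr H}_k^\L$, which is exactly local triviality. The main obstacle is the well-posedness of $P$: the entire construction hinges on the continuity of each $P_i$ in the new topology, and in particular on the continuity of $\sigma \mapsto \sigma \times \mathrm{id}_{[0,1]}$, which fails for the compact-open topology and is precisely why property $(4)$ of Theorem \ref{thm:top Lip} is built into its statement. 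A secondary point requiring care is that the push-forwards are legitimate for measures with merely compact determination sets, and that the prism identity remains compatible with the augmentation.
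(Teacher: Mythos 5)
Your proposal is correct and follows essentially the same route as the paper: the paper likewise pushes the classical prism decomposition forward at the level of measures, citing properties $(4)$ and $(3)$ of Theorem \ref{thm:top Lip} for the continuity of each $P_i$ and concluding via Lemma \ref{lem:singleton}. The only (immaterial) difference is that the paper builds the chain homotopy $P_\#$ into $\mathscr C_{\bullet+1}^\L(U \times I)$ between $i_0{}_\#$ and $i_1{}_\#$ and then composes with the chain map $h_\#$, whereas you fold $h$ directly into the definition of $P_i$ using property $(2)$.
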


To prove Theorem \ref{thm:LT Lmh}, we review an outline of the proof of Theorem \ref{thm:LT mh}.
For a regular $k$-simplex $\T^k$, the prism $\T^k \times I$ has a standard decomposition into $(k+1)$-simplices $P_i$ ($i=0, \dots, k$): 
\[
\T^k \times I = \bigcup_{i=0}^k P_i.
\]
Here, we assume that all the simplicies $P_i$ have positive orientation associated to the orientation of $\T^k$. 
Let $\sigma \in C(\T^k, X)$ be a singular simplex in a topological space $X$.
We define $\sigma \times \mathrm{id}_I : \T^k \times I \to X \times I$ by $(\sigma \times \mathrm{id}_I)(s,t) = (\sigma(s),t)$ for $s \in \T^k$ and $t \in I$.
Setting $P_i \sigma := (\sigma \times \mathrm{id}_I) |_{P_i}$, the prism decomposition of $\sigma$ is given by 
\[
P \sigma = \sum_{i=0}^k P_i \sigma.
\]
Its linear extension $P : C_k(X) \to C_{k+1}(X \times I)$ is a chain homotopy between $i_0{}_\#$ and $i_1{}_\#$, i.e., 
it satisfies
\begin{equation} \label{eq:P}
\partial P - P \partial = i_0{}_\# - i_1{}_\#
\end{equation}
where $C_\bullet(\,\cdot\,)$ denotes the real singular chain complex.
Here, the map $i_t : X \to X \times I$ is given by $i_t(x) = (x,t)$ for $t \in I$.

For each $i = 0,\dots,k$, 
the map $P_i : C(\T^k,X) \to C(\T^{k+1},X \times I)$ is continuous in the compact-open topology. 
So, it induces a map $P_i{}_\# : \mathscr C_k(X) \to \mathscr C_{k+1}(X \times I)$. 
Then, the sum $P_\# = \sum_{i=0}^k P_i{}_\#$ satisfies
\begin{equation} \label{eq:P m}
\partial P_\# - P_\# \partial = i_0{}_\# - i_1{}_\#.
\end{equation}
The relation \eqref{eq:P m} can be verified by a similar way to verify the relation \eqref{eq:P}.
This implies Theorem \ref{thm:LT mh}, due to Lemma \ref{lem:singleton}.

\begin{proof}[Proof of Theorem \ref{thm:LT Lmh}]
Let $X$ be a metric space.
If $\sigma : \T^k \to X$ is Lipschitz, then so is $P_i \sigma$.
Further, the map 
\[
P_i : \Lip(\T^k,X) \to \Lip(\T^{k+1},X \times I)
\]
is continuous in the our topology, due to $(4)$ and $(3)$ of Theorem \ref{thm:top Lip}.
Then, the map
\[
P_\# = \sum_{i=0}^k P_i{}_\#
: \mathscr C_k^\L (X) \to \mathscr C_{k+1}^\L (X \times I)
\]
is verified to be a chain homotopy 
between $i_0{}_\#$ and $i_1{}_\#$ 
by a way similar to verify that \eqref{eq:P m} is a chain homotopy.
This and Lemma \ref{lem:singleton} imply the conclusion of Theorem \ref{thm:LT Lmh}.
\end{proof}

An augmentation $\tilde \partial_0$ of the current chain complex $\N_\bullet^\cpt(X)$ was considered in \cite{RS} and was defined by 
\[
\tilde \partial_0 : \N_0^\cpt(X) \ni T \mapsto T(1) \in \mathbb R.
\]
Actually, it satisfies $\tilde \partial_0 \partial_1 T = 0$ for $T \in \N_1^\cpt(X)$.

\begin{lemma} \label{lem:commute1}
Let $X$ be a metric space.
The following diagram 
\[
\xymatrix{
\mathscr C_0^\L(X) \ar[rr]^{\mu \mapsto T^\mu} \ar[dr]_{\tilde \partial_0} &&\N_0^\cpt(X) \ar[dl]^{\tilde \partial_0} \\
& \mathbb R &
}
\]
commutes.
\end{lemma}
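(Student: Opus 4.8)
The plan is to reduce the commutativity of the diagram to the single scalar identity $\tilde\partial_0(T^\mu) = \tilde\partial_0(\mu)$ for every $\mu \in \mathscr C_0^\L(X)$, and then to evaluate both sides directly. By the definitions of the two augmentations, the left-hand composite sends $\mu$ to $T^\mu(1)$, where $1$ denotes the constant function of value $1$ (which lies in $\Lipb(X) = \mathcal D^0(X)$), while the right-hand composite sends $\mu$ to $\mu(X)$. So everything comes down to proving $T^\mu(1) = \mu(X)$.

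First I would unwind the definition of $T^\mu$ in degree $0$. For $f \in \mathcal D^0(X) = \Lipb(X)$ we have, by \eqref{eq:Tmu}, $T^\mu(f) = \int_{\Lip(\T^0,X)} [\sigma](f)\, d\mu$. I would then invoke property $(5)$ of Theorem \ref{thm:top Lip}, which gives the canonical homeomorphism $\Lip(\T^0,X) \cong X$, so that $\mathscr C_0^\L(X)$ is identified with $\mathscr M_\cpt(X)$ and the integration is over $X$ itself. Next I would compute $[\sigma](f)$ in the degenerate case $k = 0$: since $\T^0$ is a single point, the $k$-tuple $\pi$ is empty, the corresponding determinant is the empty ($0 \times 0$) determinant equal to $1$, and $\mathcal L^0$ is the unit point mass, so for $\sigma$ corresponding to the point $x \in X$ one gets $[\sigma](f) = \int_{\T^0} f \circ \sigma \, d\mathcal L^0 = f(x)$. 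This is precisely the identification already recorded in the proof of Lemma \ref{lem:0-th}, namely $T^\mu(f) = \int_X f \, d\mu$ for all $f \in \Lipb(X)$.

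Finally I would specialize to $f = 1$, obtaining $T^\mu(1) = \int_X 1 \, d\mu = \mu(X) = \tilde\partial_0(\mu)$, which together with $\tilde\partial_0(T^\mu) = T^\mu(1)$ yields the asserted commutativity. I do not anticipate any genuine obstacle here: the argument is a direct computation resting on Theorem \ref{thm:top Lip}$(5)$ and the $0$-dimensional specialization of the functional $[\,\cdot\,]$. The only point requiring care is the bookkeeping in the degenerate case $k = 0$ — that is, confirming the conventions that the empty exterior product contributes the factor $1$ and that $\mathcal L^0$ on $\T^0$ is the point mass of total measure $1$ — so that $[\sigma]$ indeed reduces to evaluation. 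Once these conventions are fixed (consistently with the formula for $\jump{1_{\T^k}}$), the identity $T^\mu(1) = \mu(X)$ is immediate.
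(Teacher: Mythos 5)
Your proposal is correct and follows exactly the paper's route: the paper's proof is the one-line computation $\tilde\partial_0 T^\mu = T^\mu(1) = \mu(X) = \tilde\partial_0\mu$, whose middle equality rests on the same identification $\Lip(\T^0,X) \cong X$ from Theorem \ref{thm:top Lip}$(5)$ and the degree-zero formula $T^\mu(f) = \int_X f\,d\mu$ already recorded in Lemma \ref{lem:0-th}. Your version merely makes explicit the $k=0$ conventions (empty determinant equal to $1$, $\mathcal L^0$ a unit point mass) that the paper leaves implicit.
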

\begin{proof}
Let $\mu \in \mathscr C_0^\L(X)$. Then, we have
\[
\tilde \partial_0 T^\mu = T^\mu(1) = \mu(X) = \tilde \partial_0 \mu.
\]
This completes the proof. 
\end{proof}

We already know the following
\begin{theorem}[\cite{Mi}] \label{thm:LT ch}
If $X$ is an locally Lipschitz contractible metric space, then the precosheaf $\mathbf H_k$ on $X$ is locally trivial for every $k \ge 1$.
\end{theorem}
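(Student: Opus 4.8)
The plan is to deduce local triviality directly from the LLC condition by producing, at the level of compactly supported normal currents, an explicit chain homotopy witnessing that the inclusion-induced chain map agrees on homology with the map induced by a constant map. Fix $x \in X$ and an open neighbourhood $O$ of $x$. By Definition \ref{def:LLC} there are an open set $O'$ with $x \in O' \subset O$, a point $y \in O$ and a Lipschitz map $h : O' \times [0,1] \to O$ with $h_0$ the inclusion $O' \hookrightarrow O$ and $h_1$ the constant map of value $y$. I would show that $i_{O,O'} = (h_0)_\#$ induces the zero map $\mathbf H_k(O') \to \mathbf H_k(O)$ for every $k \ge 1$, which is exactly local triviality of the precosheaf $\mathbf H_k$.

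The key step is to construct a prism operator via the Ambrosio--Kirchheim product of currents. For $T \in \N_k^\cpt(O')$ I would form the product current $T \times \jump{0,1} \in \N_{k+1}(O' \times [0,1])$, where $\jump{0,1}$ is the normal $1$-current on $[0,1]$ given by integration. Since $T$ has compact support in $O'$, the product has compact support in $O' \times [0,1]$, and because $h$ is Lipschitz the pushforward $h_\#$ is well defined on compactly supported currents (as recalled in \S \ref{sec:current}). I then set $P(T) := (-1)^k\, h_\#\bigl(T \times \jump{0,1}\bigr)$, obtaining an operator $P : \N_k^\cpt(O') \to \N_{k+1}^\cpt(O)$, with image of compact support contained in $h(\mathrm{supp}\,T \times [0,1]) \subset O$.

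Next I would invoke the boundary formula for products, $\partial(T \times \jump{0,1}) = \partial T \times \jump{0,1} + (-1)^k\, T \times \partial\jump{0,1}$, together with $\partial \jump{0,1} = \delta_1 - \delta_0$ and the identities $h_\#(T \times \delta_t) = (h_t)_\# T$. Applying $h_\#$ and bookkeeping the signs gives the homotopy relation $\partial P + P \partial = (h_1)_\# - (h_0)_\#$ on $\N_\bullet^\cpt(O')$, so on homology $(h_0)_\ast = (h_1)_\ast$ on $\mathbf H_k$. Finally, $h_1$ is the constant map of value $y$ and hence factors through the one-point space $\{y\}$; since a $k$-current supported at a single point vanishes for $k \ge 1$ (every $\pi_i$ being constant there, so the locality axiom forces it to be zero), we obtain $(h_1)_\ast = 0$, and therefore $i_{O,O'} = (h_0)_\ast = 0$ on $\mathbf H_k$ for all $k \ge 1$.

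The main obstacle I anticipate is not in the topological idea but in the current-theoretic bookkeeping: one must verify that the product $T \times \jump{0,1}$ of a compactly supported normal current with the interval current is again normal and compactly supported, that $h_\#$ preserves this class and commutes with $\partial$, and that the product boundary formula holds with the correct signs in the metric setting of \cite{AK}. These facts are available there, but care is needed to ensure that every current remains of compact support inside $O$ throughout, so that $P$ indeed lands in $\N_{k+1}^\cpt(O)$ and the homotopy identity is valid as an identity of compactly supported normal currents.
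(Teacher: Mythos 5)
The paper does not prove this theorem internally --- it is imported verbatim from \cite{Mi} --- and your argument is precisely the standard one underlying that citation: the Ambrosio--Kirchheim product with the interval current and the homotopy formula $\partial h_\#(\jump{0,1}\times T) + h_\#(\jump{0,1}\times \partial T) = (h_1)_\# T - (h_0)_\# T$ yield a chain homotopy on $\N_\bullet^\cpt$, and the locality axiom kills $(h_1)_\#$ (indeed at the chain level, since $\pi\circ h_1$ is constant) in degrees $k \ge 1$, giving local triviality of $\mathbf H_k$ on an LLC space. Your proof is correct as proposed; the only point requiring care is the order-of-factors and sign conventions in \cite{AK} (where the product is written $\jump{0,1}\times T$ rather than $T\times\jump{0,1}$), which affects only the bookkeeping you already flagged.
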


By summarizing above preparations,  
we obtain
\begin{proof}[Proof of Theorem \ref{thm:main thm}]
This follows from Lemmas \ref{lem:0-th} and \ref{lem:commute1}, Theorems \ref{thm:LT ch}, \ref{thm:LT mh}, \ref{thm:LT Lmh} and \ref{thm:coincidence} and Corollary \ref{cor:mc is cosheaf}. 
\end{proof}

Let us prove Corollaries \ref{cor:top inv}--\ref{cor:alex}. 

\begin{proof}[Proof of Corollary \ref{cor:top inv}]
This follows from Theorem \ref{thm:main thm0} and results in \cite{H} and \cite{Z}. 
\end{proof}

\begin{proof}[Proof of Corollary \ref{cor:Haus dim}]
Let $X$ be a metric space of Hausdorff dimension $< n$, for a nonnegative integer $n$.
Then, due to \cite[Theorem 3.9]{AK}, $\mathbf N_k(X) = 0$ for every integer $k \ge n$.
Therefore, we have $\mathbf H_k(X) = 0$ for $k \ge n$. 
Hence, if $X$ is locally Lipschitz contractible, then by Theorem \ref{thm:main thm0}, we obtain $\mathscr H_k(X) = 0$ for all $k \ge n$.
\end{proof}

\begin{proof}[Proof of Corollary \ref{cor:sh}]
Let $X$ be an LLC metric space and $Y$ a finite CW-complex. 
Suppose that there is a homotopy equivalence $h : X \to Y$. 
By \cite{H} and \cite{Z}, there is a commutative diagram consisting of isomorphisms:
\[
\xymatrix{
H_\ast(X) \ar[r] \ar[d]_{h_\ast} & \mathscr H_\ast(X) \ar[d]^{h_\ast} \\
H_\ast(Y) \ar[r] & \mathscr H_\ast(Y).
}
\]
Here, $H_\ast$ is the usual singular real homology.
Due to Theorem \ref{thm:main thm}, we have $\mathscr H_\ast(X) \cong \mathbf H_\ast(X)$. 
Further, if $Y$ is a finite CW-complex, then $\dim H_\ast(Y)< \infty$. 
Therefore, we obtain the conclusion.
\end{proof}

\begin{proof}[Proof of Corollary \ref{cor:alex}]
Let $X$ be an $n$-dimensional compact orientable Alexandrov space without boudnary as in the assumption. 
By \cite{Y}, we have $H_n(X) \cong \mathbb R$. 
Since $X$ has the homotopy type of a CW-complex, by Corollary \ref{cor:sh} and Theorem \ref{thm:main thm0}, we obtain $\mathbf H_n(X) \cong \mathscr H_n(X) \cong \mathbb R$.
This completes the proof.
\end{proof}


\begin{theorem} \label{thm:main thm:rel}
On the category of all pairs of metric spaces and all locally Lipschitz maps, there are natural transformations $\mathscr C_\bullet \leftarrow \mathscr C_\bullet^\L \to \N_\bullet^\cpt$. 
If they are restricted to the category of all pairs of locally Lipschitz contractible metric spaces and all locally Lipschitz maps, then they induce isomorphisms $\mathscr H_\ast \leftarrow \mathscr H_\ast^\L \to \mathbf H_\ast$ between the homologies. 
\end{theorem}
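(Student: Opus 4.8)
The plan is to reduce the relative statement to the absolute one (Theorem \ref{thm:main thm}) by means of the long exact homology sequences of pairs together with the five lemma. First I would construct the two relative natural transformations. The absolute chain maps $\mathscr C_\bullet^\L(X) \to \mathscr C_\bullet(X)$ of \eqref{eq:natural} and $\mathscr C_\bullet^\L(X) \to \N_\bullet^\cpt(X)$ of Theorem \ref{thm:Tmu} are natural with respect to locally Lipschitz maps; applying this naturality to the inclusion $A \hookrightarrow X$ shows that each of them carries the subcomplex over $A$ into the subcomplex over $A$. Hence both descend to the quotient complexes, yielding chain maps $\mathscr C_\bullet^\L(X,A) \to \mathscr C_\bullet(X,A)$ and $\mathscr C_\bullet^\L(X,A) \to \N_\bullet^\cpt(X,A)$. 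Naturality of these relative maps with respect to a locally Lipschitz map of pairs $f:(X,A) \to (Y,B)$ follows at once from the absolute naturality, since $f$ respects the subcomplex structure.

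Next I would invoke the short exact sequences of chain complexes. For each of the three complexes $\mathscr C_\bullet^\L$, $\mathscr C_\bullet$ and $\N_\bullet^\cpt$, the inclusion-induced chain map over $A$ is injective (this is exactly how $\mathscr C_\bullet^\L(A)$, $\mathscr C_\bullet(A)$ and $\N_\bullet^\cpt(A)$ were realized as subcomplexes in \S\ref{sec:current} and \S\ref{sec:mh}), so that
\[
0 \to \mathscr C_\bullet^\L(A) \to \mathscr C_\bullet^\L(X) \to \mathscr C_\bullet^\L(X,A) \to 0
\]
is exact, and likewise for the other two. Each short exact sequence produces a long exact homology sequence, and the two relative natural transformations, being induced by maps of short exact sequences of complexes, furnish commutative ladders relating the long exact sequence of $\mathscr C_\bullet^\L(\,\cdot\,,\cdot\,)$ to those of $\mathscr C_\bullet(\,\cdot\,,\cdot\,)$ and $\N_\bullet^\cpt(\,\cdot\,,\cdot\,)$; the squares involving the connecting homomorphisms commute by the standard naturality of the connecting homomorphism.

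Now assume $(X,A)$ is a pair of locally Lipschitz contractible spaces, so that $X$ and $A$ are each LLC. By Theorem \ref{thm:main thm} the vertical maps on the absolute terms, namely $\mathscr H_k^\L(A) \to \mathscr H_k(A)$, $\mathscr H_k^\L(X) \to \mathscr H_k(X)$ and their current analogues $\mathscr H_k^\L(A) \to \mathbf H_k(A)$ and $\mathscr H_k^\L(X) \to \mathbf H_k(X)$, are isomorphisms for every $k$. In each ladder the vertical arrows at the four absolute positions flanking the relative term are thus isomorphisms, so applying the five lemma forces the relative vertical map $\mathscr H_k^\L(X,A) \to \mathscr H_k(X,A)$, respectively $\mathscr H_k^\L(X,A) \to \mathbf H_k(X,A)$, to be an isomorphism as well, which is the assertion.

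The mechanics here are entirely standard; the only points demanding genuine care are the two structural facts underlying the exact sequences, namely the injectivity of the inclusion-induced chain maps (already recorded when the relative complexes were defined) and the compatibility of the connecting homomorphisms with the two natural transformations. I expect the latter to be the main thing to verify with care: it amounts to confirming that the relative natural transformations genuinely arise from maps of short exact sequences of complexes, which follows directly from the definitions once the descent to quotients established in the first step is in place.
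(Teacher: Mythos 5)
Your proposal is correct and is essentially the argument the paper intends: its proof of Theorem \ref{thm:main thm:rel} consists of one line invoking Theorem \ref{thm:main thm} together with ``the five lemma and the snake lemma,'' which is exactly the long-exact-sequence ladder argument you spell out. The details you fill in (descent of the natural transformations to the quotient complexes, injectivity of the inclusion-induced chain maps, naturality of the connecting homomorphisms) are precisely the standard verifications the paper leaves implicit.
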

\begin{proof}
This follows from Theorem \ref{thm:main thm} together with a standard argument of homological algebra (the five lemma and the snake lemma).
\end{proof}

\begin{remark} \upshape \label{rem:PS}
In \cite{PS1} and \cite{PS2}, Paolini and Stepanov thoroughly researched one-dimensional normal currents in arbitrary metric spaces. 
For a metric space $E$, they provided a pseudo-distance function $d_\Theta$ on $\Lip([0,1],E)$ which is related to the uniform distance. 
The quotient space of $\Lip([0,1],E)$ under the relation $d_\Theta = 0$ was denoted by $\Theta(E)$.
By the definition of $d_\Theta$, it is known that if $d_\Theta(\theta, \theta') = 0$, then $[\theta] = [\theta']$ holds.
Their results say that {\it every} normal one-dimensional current in {\it every} metric space $E$ is represented by the integral of $[\theta]$ in a positive Borel measure on $\Theta(E)$ (and in a positive Borel measure on $C([0,1],E)$ concentrated on $\Lip([0,1],E)$), vice versa. 
See \cite{PS1} and \cite{PS2} for more details.
In contrast to their results, our Theorem \ref{thm:main thm} says that every normal current {\it cycle} (i.e., its boundary is zero) of {\it every dimension}, in every LLC metric space is represented by an integral of $[\theta]$. 
\end{remark}

\section{A topology on the space of bounded Lipschitz maps} \label{sec:top Lip}
In this section, we provide a topology on the set of all bounded Lipschitz maps between general metric spaces which satisfies the properties as in Theorem \ref{thm:top Lip} and additional properties.

\subsection{Banach space target} \label{subsec:BT}
In this subsection, we define a topology on the set of all bounded Lipschitz maps from a metric space to a Banach space. 

Let $B$ be a Banach space over real numbers and $X$ a metric space. 
Let $\Lipb(X,B)$ be the set of all bounded Lipschitz maps from $X$ to $B$. 
Then, it is a real vector space associated to the standard addition and scalar multiplication operators. 
We consider a norm on $\Lipb(X,B)$ defined by 
\[
\|f\| = \|f\|_\infty + \Lip(f),
\]
where $\|f\|_\infty$ is the supremum norm. 
The set $\Lipb(X,B)$ equipped with the topology induced by $\|\cdot\|$ is denoted by $\BT(X,B)$. 
Here, the symbol $\mathrm{BT}$ indicates ``Banach space Target''. 
This topology has the following fundamental properties. 

\begin{proposition} \label{prop:BT}
Let $X$ and $X'$ be metric spaces and $B$ and $B'$ Banach spaces. 
Then, the following holds. 
\begin{itemize}
\item[(a)]
Let $f_j, f \in \BT(X,B)$ with $j \in \mathbb N$. 
Then, $f_j$ converges to $f$ in the topology of $\BT(X,B)$ if and only if $\|f_j - f\|_\infty \to 0$ and $\Lip(f_j - f) \to 0$ as $j \to \infty$; 
\item[(b)]
Let $\phi : B \to B'$ be a continuous linear map. 
Then, the map $\phi_\# : \BT(X,B) \to \BT(X,B')$ defined by the composition $\phi_\# (f) = \phi \circ f$, is a continuous linear map. 
In addition, if $\phi$ is injective and its image $\phi(B)$ is closed in $B'$, then $\phi_\#$ is a topological embedding. 
\item[(c)]
Let $\psi : X \to X'$ be a Lipschitz map. 
Then, the composition $\psi^\# : \BT(X',B) \to \BT(X,B); f \mapsto f \circ \psi$ is a continuous linear map.
\item[(d)]
The map 
\[
\BT(X,B) \times \BT(X',B') \to \BT(X \times X',B \times B')
\]
defined by $(f,g) \mapsto f \times g$ is a continuous linear map. 
Here, $f \times g$ is given by $(f \times g)(x,x') = (f(x),g(x'))$ for $x \in X$ and $x' \in X'$.
\item[(e)]
Let $\{\ast\}$ be a single-point set. 
Then, the map $\BT(\{\ast\}, B) \ni f \mapsto f(\ast) \in B$ is a linear homeomorphism.
\end{itemize}
\end{proposition}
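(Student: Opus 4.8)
The plan is to reduce every assertion to an estimate for the norm $\|f\| = \|f\|_\infty + \Lip(f)$, since by construction the topology of $\BT(X,B)$ is the one induced by this norm. Part (a) is then immediate: convergence $f_j \to f$ means $\|f_j - f\| \to 0$, and because $\|f_j-f\| = \|f_j-f\|_\infty + \Lip(f_j-f)$ is a sum of two nonnegative quantities, $\|f_j-f\|\to 0$ holds if and only if both $\|f_j-f\|_\infty\to 0$ and $\Lip(f_j-f)\to 0$. For (b)--(e) the maps in question are manifestly linear (in (d) one checks directly that $(f,g)\mapsto f\times g$ respects the addition and scalar multiplication of the product vector space $\BT(X,B)\times\BT(X',B')$), so by linearity it suffices to prove that each is a bounded operator between the relevant normed spaces, and in passing these same bounds show that the image maps indeed lie in the asserted target spaces.

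For (b), I would use that for a bounded linear $\phi$ one has $\Lip(\phi) = \|\phi\|_{\mathrm{op}}$, whence $\|\phi\circ f\|_\infty \le \|\phi\|_{\mathrm{op}}\,\|f\|_\infty$ and $\Lip(\phi\circ f)\le \|\phi\|_{\mathrm{op}}\,\Lip(f)$; adding gives $\|\phi_\#(f)\|\le \|\phi\|_{\mathrm{op}}\,\|f\|$, so $\phi_\#$ is continuous. For (c), since $\psi$ is Lipschitz we get $\|f\circ\psi\|_\infty\le\|f\|_\infty$ and $\Lip(f\circ\psi)\le \Lip(f)\,\Lip(\psi)$, hence $\|\psi^\#(f)\|\le \max(1,\Lip(\psi))\,\|f\|$, again continuous.

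The main obstacle is the embedding statement in (b). Here the hypothesis is essential: if $\phi$ is injective with closed image $\phi(B)$, then $\phi(B)$ is itself a Banach space and $\phi\colon B\to\phi(B)$ is a continuous linear bijection, so by the bounded inverse theorem there is $c>0$ with $\|\phi(v)\|\ge c\|v\|$ for all $v\in B$. Applying this pointwise gives $\|\phi\circ f\|_\infty\ge c\|f\|_\infty$, and applying it to the differences $f(x)-f(y)$ gives $\Lip(\phi\circ f)\ge c\,\Lip(f)$; adding yields $\|\phi_\#(f)\|\ge c\|f\|$. Together with the upper bound from the first part, $\phi_\#$ is then bi-Lipschitz onto its image, hence a topological embedding.

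Finally, for (d) I would fix on $X\times X'$ and on $B\times B'$ the product metric and product norm given by the maximum of the two coordinates (any choice bi-Lipschitz to it works equally well). A direct computation then yields $\|f\times g\|_\infty = \max(\|f\|_\infty,\|g\|_\infty)$ and $\Lip(f\times g)\le\max(\Lip(f),\Lip(g))$, so $\|f\times g\|\le \|f\|+\|g\|$ and the canonical map is bounded, hence continuous. For (e) the point is that any map from $\{\ast\}$ is vacuously Lipschitz with $\Lip = 0$, so $\|f\| = \|f\|_\infty = \|f(\ast)\|$; thus $f\mapsto f(\ast)$ is a linear isometry onto $B$ and in particular a homeomorphism. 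I expect no difficulty beyond the open-mapping input in (b); the remainder is routine norm bookkeeping.
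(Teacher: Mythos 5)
Your proposal is correct and follows essentially the paper's own route: the paper likewise works directly with the norm $\|f\|=\|f\|_\infty+\Lip(f)$, proves (b) via the bounds $\|\phi\circ f\|_\infty\le\|\phi\|_{\mathrm{op}}\|f\|_\infty$ and $\Lip(\phi\circ f)\le\|\phi\|_{\mathrm{op}}\Lip(f)$, and settles the embedding claim by the inverse mapping theorem, whose consequence $\|\phi(v)\|\ge c\|v\|$ you merely spell out in more detail. The only deviation is cosmetic: the paper deduces (d) from (b) and (c) (writing $f\times g$ via projections and coordinate inclusions), whereas you verify the product bound directly with the max metric, which is an equally valid and no less elementary computation.
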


\begin{proof}
The property (d) follows from (b) and (c). 
The properties (a), (b), (c) and (e) are easily proved. 
We give a proof of (b) for the convenience. 
Let $\phi : B \to B'$ be a continuous linear map between Banach spaces $B$ and $B'$, and $X$ a metric space. 
It is trivial that $\phi_\#$ is linear. 
For any $f \in \Lipb(X,B)$, we have 
\[
\|\phi \circ f\|_\infty \le \|\phi\|_\mathrm{op} \|f\|_\infty \text{ and } \Lip(\phi \circ f) \le \|\phi\|_\mathrm{op} \Lip(f),
\]
where $\|\phi\|_\mathrm{op}$ is the operator norm of $\phi$. 
Hence, $\phi_\#$ is continuous. 
The second statement of (b) follows from the inverse mapping theorem. 
\end{proof}

\subsection{Double dual of metric spaces} \label{subsec:double dual}

Every metric space admits an isometric embedding into a Banach space. 
Several such constructions are known. 
Among them, we choose the following way.
Let $X$ be a metric space and $x_0 \in X$. 
Let $\Lip_{x_0}(X)$ denote the Banach space of all real-valued Lipschitz functions on $X$ vanishing at $x_0$, equipped with the norm taking the smallest Lipschitz constant.
We denote by $X_{x_0}^{\ast\ast}$ the continuous dual of it.
Then, a map $\delta : X \to X_{x_0}^{\ast\ast}$ defined by 
\begin{equation} \label{eq:delta}
\delta_x (f) = f (x)
\end{equation}
for all $x \in X$ and $f \in \Lip_{x_0}(X)$, can be easily verified to be an isometric embedding. 
Obviously, $\delta_{x_0}$ becomes the zero vector in $X_{x_0}^{\ast\ast}$.
In this subsection, we observe fundamental properties of this construction. 
In particular, we see that the construction $(X,x_0) \mapsto X_{x_0}^{\ast\ast}$ satisfies a covariant functorial property (Proposition \ref{prop:covariant}). 

\begin{remark}\upshape
The closed linear span of $\{\delta_x \in X_{x_0}^{\ast\ast} \mid x \in X\}$ is called the (Lipschitz) free Banach space or the Arens-Eells space associated to $(X, x_0)$ (see e.g. \cite{W}).
\end{remark}

For a Lipschitz map $\phi : X \to Y$ between metric spaces and $x_0 \in X$, we define 
\[
\phi_\# : X_{x_0}^{\ast\ast} \to Y_{\phi(x_0)}^{\ast\ast}
\]
by 
\[
(\phi_\# \mu) (f) = \mu(f \circ \phi)
\]
for all $\mu \in X_{x_0}^{\ast\ast}$ and $f \in \Lip_{\phi(x_0)}(Y)$.

\begin{lemma} \label{lem:commute}
For a Lipschitz map $\phi : X \to Y$ and $x_0 \in X$, 
\[
\phi_\# \circ \delta = \delta \circ \phi
\]
holds. 
\end{lemma}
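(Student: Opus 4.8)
The plan is to unwind the definitions of both composites and check they agree as maps $X \to Y_{\phi(x_0)}^{\ast\ast}$. Since both $\phi_\# \circ \delta$ and $\delta \circ \phi$ send a point $x \in X$ to an element of the dual space $Y_{\phi(x_0)}^{\ast\ast}$, it suffices to fix an arbitrary $x \in X$ and show that the two functionals $(\phi_\# \circ \delta)(x) = \phi_\#(\delta_x)$ and $(\delta \circ \phi)(x) = \delta_{\phi(x)}$ coincide. Two elements of $Y_{\phi(x_0)}^{\ast\ast}$ are equal precisely when they take the same value on every $f \in \Lip_{\phi(x_0)}(Y)$, so I would reduce the claim to a pointwise evaluation against such an $f$.

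The computation then proceeds by substituting the defining formulas. First I would compute the left-hand side: by the definition of $\phi_\#$ on $X_{x_0}^{\ast\ast}$ we have $(\phi_\#(\delta_x))(f) = \delta_x(f \circ \phi)$, and by the definition \eqref{eq:delta} of $\delta$ this equals $(f \circ \phi)(x) = f(\phi(x))$. For the right-hand side, \eqref{eq:delta} gives directly $\delta_{\phi(x)}(f) = f(\phi(x))$. Thus both sides equal $f(\phi(x))$, and since $f$ and $x$ were arbitrary the identity $\phi_\# \circ \delta = \delta \circ \phi$ follows.

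The only point requiring care, which I would record explicitly, is the well-posedness of the step $\delta_x(f \circ \phi)$: for $\delta_x$ to be applied, the function $f \circ \phi$ must lie in $\Lip_{x_0}(X)$. This holds because $f \circ \phi$ is Lipschitz as a composition of Lipschitz maps, and it vanishes at $x_0$ since $(f \circ \phi)(x_0) = f(\phi(x_0)) = 0$, using that $f \in \Lip_{\phi(x_0)}(Y)$ vanishes at the basepoint $\phi(x_0)$. This is exactly the compatibility of basepoints built into the definition of $\phi_\#$, and it is the substantive ingredient rather than any analytic difficulty; beyond it the lemma is a direct verification.
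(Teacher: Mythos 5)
Your proof is correct and is essentially the same direct verification as the paper's: fix $x$ and $f \in \Lip_{\phi(x_0)}(Y)$, then compute $\phi_\#\delta_x(f) = \delta_x(f\circ\phi) = f(\phi(x)) = \delta_{\phi(x)}(f)$. Your explicit check that $f\circ\phi \in \Lip_{x_0}(X)$ is a welcome extra beyond the paper's one-line computation (which in fact misstates the test function's domain as $\Lip_{x_0}(X)$), but it does not change the argument.
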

\begin{proof}
Let $x \in X$ and $f \in \Lip_{x_0}(X)$. 
Then, $\phi_\# \delta_x (f) = \delta_x (f \circ \phi) = f (\phi(x)) = \delta_{\phi(x)} (f)$.
Namely, $\phi_\# \circ \delta = \delta \circ \phi$ holds.
\end{proof}

\begin{proposition} 
\label{prop:covariant}
Let $\phi : X \to Y$ and $x_0 \in X$ as above. 
Then, $\phi_\# : X_{x_0}^{\ast\ast} \to Y_{\phi(x_0)}^{\ast\ast}$ is a bounded linear operator with operator norm $\Lip(\phi)$.
\end{proposition}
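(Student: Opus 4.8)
The plan is to show that $\phi_\#$ is nothing but the Banach-space adjoint of the pullback of Lipschitz functions, read off the upper bound from the composition estimate, and then realize the lower bound by testing against the differences of point-evaluations $\delta_x$, using crucially that $\delta$ is an isometric embedding.

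First I would establish that $\phi_\#$ is well-posed, linear, and satisfies $\|\phi_\#\|_{\mathrm{op}} \le \Lip(\phi)$. If $f \in \Lip_{\phi(x_0)}(Y)$, then $f \circ \phi$ vanishes at $x_0$, since $(f\circ\phi)(x_0) = f(\phi(x_0)) = 0$, and $\Lip(f\circ\phi) \le \Lip(\phi)\,\Lip(f)$. Thus $f\circ\phi \in \Lip_{x_0}(X)$, so the assignment $f\mapsto f\circ\phi$ defines a bounded linear pullback $\phi^\ast : \Lip_{\phi(x_0)}(Y) \to \Lip_{x_0}(X)$, and $\phi_\#$ is exactly its adjoint because $(\phi_\#\mu)(f) = \mu(\phi^\ast f)$. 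In particular $\phi_\#$ is linear, and for every $\mu \in X_{x_0}^{\ast\ast}$ and every $f$ with $\Lip(f) \le 1$ one has $|(\phi_\#\mu)(f)| = |\mu(f\circ\phi)| \le \|\mu\|\,\Lip(\phi)\,\Lip(f) \le \Lip(\phi)\,\|\mu\|$. This shows $\phi_\#\mu \in Y_{\phi(x_0)}^{\ast\ast}$ with $\|\phi_\#\mu\| \le \Lip(\phi)\,\|\mu\|$, hence $\|\phi_\#\|_{\mathrm{op}} \le \Lip(\phi)$.

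For the reverse inequality I would feed in the vectors $\mu = \delta_x - \delta_{x'}$ for $x \ne x'$ in $X$. Since $\delta$ is an isometric embedding and $\delta_{x_0} = 0$, we have $\|\mu\| = d_X(x,x')$. By Lemma \ref{lem:commute}, $\phi_\#\delta_x = \delta_{\phi(x)}$ and $\phi_\#\delta_{x'} = \delta_{\phi(x')}$, so $\phi_\#\mu = \delta_{\phi(x)} - \delta_{\phi(x')}$, whose norm equals $d_Y(\phi(x),\phi(x'))$, again by the isometry of $\delta$. Therefore
\[
\frac{\|\phi_\#\mu\|}{\|\mu\|} = \frac{d_Y(\phi(x),\phi(x'))}{d_X(x,x')},
\]
and taking the supremum over all $x \ne x'$ gives $\|\phi_\#\|_{\mathrm{op}} \ge \Lip(\phi)$. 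Combining the two bounds yields $\|\phi_\#\|_{\mathrm{op}} = \Lip(\phi)$.

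I do not anticipate a serious obstacle; the computation is essentially forced once $\delta$ is known to be isometric, which lets the lower bound be extracted from Lemma \ref{lem:commute} without invoking the Hahn--Banach theorem separately. The only points requiring a word of care are the degenerate case in which $X$ is a single point (then $\Lip_{x_0}(X) = \{0\}$, both sides vanish, and the claim is trivial) and the observation that when $\phi$ is constant the pullback kills every function vanishing at $\phi(x_0)$, so that $\phi_\# = 0 = \Lip(\phi)$ as well.
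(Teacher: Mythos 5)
Your proof is correct and follows essentially the same route as the paper: the upper bound via the composition estimate $\Lip(f\circ\phi)\le\Lip(\phi)\Lip(f)$, and the lower bound by applying Lemma \ref{lem:commute} to differences $\delta_x-\delta_{x'}$ and using that $\delta$ is isometric, so that $\|\phi_\#(\delta_x-\delta_{x'})\|/\|\delta_x-\delta_{x'}\| = d(\phi(x),\phi(x'))/d(x,x')$ approaches $\Lip(\phi)$. Your repackaging of $\phi_\#$ as the adjoint of the pullback $\phi^\ast$, and your remarks on the degenerate cases, are harmless elaborations of the same argument.
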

\begin{proof}
It is trivial that $\phi_\#$ is linear.
For $f \in \Lip_{\phi(x_0)}(Y)$, 
\[
|\phi_\# \mu (f)| = |\mu (f \circ \phi)| \le \| \mu \| \Lip(f) \Lip(\phi).
\]
Hence, $\|\phi_\# \mu\| \le \|\mu\| \Lip(\phi)$.
It implies $\|\phi_\# \| \le \Lip(\phi)$.
By Lemma \ref{lem:commute}, for every $x \neq y \in X$, we have 
\[
\frac{\|\phi_\# \delta_x - \phi_\# \delta_y \|}{\|\delta_x - \delta_y\|} = \frac{d(\phi(x),\phi(y))}{d(x,y)}.
\]
Since this value can be taken to be arbitrary close to $\Lip(\phi)$, we obtain $\|\phi_\#\| = \Lip(\phi)$. 
\end{proof}

\begin{corollary} \label{cor:bi-Lip iso}
If $\phi : X \to Y$ is a bi-Lipschitz homeomorphism and $x_0 \in X$, then $\phi_\# : X_{x_0}^{\ast\ast} \to Y_{\phi(x_0)}^{\ast\ast}$ is a linear homeomorphism.
\end{corollary}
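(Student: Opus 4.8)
The plan is to exhibit an explicit two-sided inverse of $\phi_\#$ coming from the inverse bi-Lipschitz map, and then to read off continuity of both maps from the operator-norm computation in Proposition \ref{prop:covariant}. Since $\phi : X \to Y$ is a bi-Lipschitz homeomorphism, its inverse $\phi^{-1} : Y \to X$ is again Lipschitz, and it sends the base point $\phi(x_0)$ back to $x_0$. Hence the pushforward $(\phi^{-1})_\# : Y_{\phi(x_0)}^{\ast\ast} \to X_{x_0}^{\ast\ast}$ is defined by the same formula as $\phi_\#$, and by Proposition \ref{prop:covariant} it is a bounded linear operator with operator norm $\Lip(\phi^{-1})$.

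The key step is to verify the covariant functoriality of the construction $\phi \mapsto \phi_\#$. First, for the identity map $\mathrm{id}_X : X \to X$ one checks directly from the defining formula that $(\mathrm{id}_X)_\# \mu (f) = \mu(f \circ \mathrm{id}_X) = \mu(f)$, so $(\mathrm{id}_X)_\#$ is the identity on $X_{x_0}^{\ast\ast}$. Second, for Lipschitz maps $\phi : X \to Y$ and $\psi : Y \to Z$ with compatible base points, I would compute, for $\mu \in X_{x_0}^{\ast\ast}$ and $g \in \Lip_{\psi(\phi(x_0))}(Z)$,
\[
\big((\psi \circ \phi)_\# \mu\big)(g) = \mu\big(g \circ (\psi \circ \phi)\big) = \mu\big((g \circ \psi) \circ \phi\big) = \big(\phi_\# \mu\big)(g \circ \psi) = \big(\psi_\#(\phi_\# \mu)\big)(g),
\]
so that $(\psi \circ \phi)_\# = \psi_\# \circ \phi_\#$ as operators.

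Applying this functoriality to the pair $\phi$ and $\phi^{-1}$ then finishes the argument: one obtains $(\phi^{-1})_\# \circ \phi_\# = (\phi^{-1} \circ \phi)_\# = (\mathrm{id}_X)_\# = \mathrm{id}$ on $X_{x_0}^{\ast\ast}$, and symmetrically $\phi_\# \circ (\phi^{-1})_\# = (\phi \circ \phi^{-1})_\# = (\mathrm{id}_Y)_\# = \mathrm{id}$ on $Y_{\phi(x_0)}^{\ast\ast}$. Thus $\phi_\#$ is a linear bijection whose inverse $(\phi^{-1})_\#$ is likewise bounded by Proposition \ref{prop:covariant}; hence $\phi_\#$ is a linear homeomorphism.

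I do not expect a serious obstacle in this argument: it is essentially the observation that $(X, x_0) \mapsto X_{x_0}^{\ast\ast}$ is a covariant functor into Banach spaces, so it automatically carries isomorphisms to isomorphisms. The only point requiring a little care is the bookkeeping of base points, namely checking that $\phi(x_0)$ is the correct base point for $\phi^{-1}$ to be pushed forward; this is immediate from $\phi^{-1}(\phi(x_0)) = x_0$, which in turn makes the compositions $\phi^{-1} \circ \phi$ and $\phi \circ \phi^{-1}$ base-point preserving so that the functoriality identity applies verbatim.
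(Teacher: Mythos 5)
Your proof is correct and is essentially the argument the paper intends: the corollary is stated without proof as an immediate consequence of Proposition \ref{prop:covariant}, namely the covariant functoriality $(\psi \circ \phi)_\# = \psi_\# \circ \phi_\#$ and $(\mathrm{id}_X)_\# = \mathrm{id}$ applied to $\phi$ and $\phi^{-1}$, exactly as you spell out. Your version merely makes explicit the base-point bookkeeping and the boundedness of $(\phi^{-1})_\#$, which the paper leaves implicit.
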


\begin{lemma}\label{lem:inclusion}
If $X$ is a subset of a metric space $Y$, then the inclusion $i : X \to Y$ induces an isometric linear embedding $i_\# : X_{x_0}^{\ast\ast} \to Y_{x_0}^{\ast\ast}$ for every $x_0 \in X$.
\end{lemma}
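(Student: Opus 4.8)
The plan is to prove the statement by showing that $i_\#$ is a norm-preserving (isometric) linear map; since an isometric linear map between normed spaces is automatically injective and a topological embedding, this is all that is required. Linearity of $i_\#$ is immediate from its definition $(i_\# \mu)(g) = \mu(g \circ i) = \mu(g|_X)$. For the norm computation I would first dispose of the easy inequality: the inclusion $i : X \hookrightarrow Y$ carries the subspace metric, so it is $1$-Lipschitz, and Proposition \ref{prop:covariant} then gives $\|i_\#\| \le \Lip(i) \le 1$, i.e. $\|i_\# \mu\| \le \|\mu\|$ for every $\mu \in X_{x_0}^{\ast\ast}$.

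The substance of the lemma lies in the reverse inequality $\|\mu\| \le \|i_\# \mu\|$. Unwinding the dual norms, one has
\[
\|\mu\| = \sup\bigl\{ |\mu(f)| : f \in \Lip_{x_0}(X),\ \Lip(f) \le 1 \bigr\}, \qquad \|i_\# \mu\| = \sup\bigl\{ |\mu(g|_X)| : g \in \Lip_{x_0}(Y),\ \Lip(g) \le 1 \bigr\}.
\]
So it suffices to show that every competitor $f$ for the first supremum can be realized as $g|_X$ for some competitor $g$ in the second. This is exactly the point where the McShane--Whitney extension theorem enters: given $f \in \Lip_{x_0}(X)$ with $\Lip(f) \le 1$, I would set
\[
g(y) = \inf_{x \in X} \bigl( f(x) + d(x,y) \bigr), \qquad y \in Y,
\]
and verify that $g$ is $1$-Lipschitz on $Y$, that $g|_X = f$ (this uses that $f$ itself is $1$-Lipschitz on $X$), and hence in particular that $g(x_0) = f(x_0) = 0$, so that $g \in \Lip_{x_0}(Y)$. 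Then $|\mu(f)| = |\mu(g|_X)| = |(i_\# \mu)(g)| \le \|i_\# \mu\|$, and taking the supremum over all such $f$ yields $\|\mu\| \le \|i_\# \mu\|$.

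Combining the two inequalities gives $\|i_\# \mu\| = \|\mu\|$, so $i_\#$ is an isometric linear embedding, completing the proof. The main obstacle is precisely this reverse inequality, and it rests entirely on the norm-preserving extendability of scalar Lipschitz functions from $X$ to $Y$; the subtle point to check carefully is that the McShane extension simultaneously preserves the Lipschitz constant \emph{and} the normalization $g(x_0) = 0$, since staying inside $\Lip_{x_0}(Y)$ is what allows $g$ to be fed to the functional $i_\# \mu$. Everything else is routine bookkeeping with the dual norms.
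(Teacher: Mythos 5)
Your proof is correct and follows essentially the same route as the paper: the easy inequality $\|i_\#\mu\| \le \|\mu\|$ comes from the inclusion being $1$-Lipschitz, and the reverse inequality rests on the McShane--Whitney extension preserving both the Lipschitz constant and the normalization at $x_0$, exactly as in the paper (which merely phrases this step as a contradiction argument rather than your direct supremum comparison). Your observation that isometry alone already yields injectivity and the topological embedding property is a harmless simplification of the paper's dual-of-a-surjection remark.
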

\begin{proof}
Let $x_0 \in X$ be fixed.
The inclusion $i : X \hookrightarrow Y$ induces a linear map $i^\# : \Lip_{x_0}(Y) \to \Lip_{x_0}(X)$ given by $f \mapsto f \circ i = f |_X$ for $f \in \Lip_{x_0}(Y)$.
Since $\Lip(f |_X) \le \Lip(f)$ for all $f \in \Lip_{x_0}(Y)$, the operator norm of $i^\#$ is not greater than $1$.
Due to the McShane-Whitney Lipschitz extension theorem, the map $i^\#$ is surjective.
Dually, the bounded linear operator $i_\# : X_{x_0}^{\ast\ast} \to Y_{x_0}^{\ast\ast}$ is injective and its operator norm $\le 1$.
We prove that $\|i_\# \mu\| = \|\mu\|$ for every $\mu \in X_{x_0}^{\ast\ast}$.
We assume that there exist $\epsilon > 0$ and $\mu \in X_{x_0}^{\ast\ast}$ such that $\|i_\# \mu\| \le (1-\epsilon) \|\mu\|$.
We may assume that $\|\mu\| = 1$. 
Hence, we have
\[
|\mu (f |_X)| \le (1-\epsilon) \Lip(f)
\]
for every $f \in \Lip_{x_0}(Y)$.
Again, due to the McShane-Whitney extension theorem, for every $g \in \Lip_{x_0}(X)$, there is $f \in \Lip_{x_0}(Y)$ such that $f|_X = g$ and $\Lip(f) = \Lip(g)$. 
This yields, 
\[
|\mu(g)| \le (1-\epsilon) \Lip(g)
\]
for every $g \in \Lip_{x_0}(X)$.
It implies $\|\mu\| \le 1 - \epsilon$ which contradicts to the assumption $\|\mu\| = 1$.
Therefore, $\|i_\# \mu\| = \|\mu\|$ holds for every $\mu$.
This completes the proof.
\end{proof}

\begin{corollary} \label{cor:bi-Lip}
If $\phi : X \to Y$ be a bi-Lipschitz embedding, then $\phi_\# : X_{x_0}^{\ast\ast} \to Y_{\phi(x_0)}^{\ast\ast}$ is an injective bounded linear map having closed image. 
\end{corollary}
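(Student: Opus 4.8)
The plan is to factor the bi-Lipschitz embedding through its image and then combine Corollary \ref{cor:bi-Lip iso} with Lemma \ref{lem:inclusion}. First I would write $\phi = i \circ \bar\phi$, where $\bar\phi : X \to \phi(X)$ is the corestriction of $\phi$ to its image and $i : \phi(X) \hookrightarrow Y$ is the inclusion. Since $\phi$ is a bi-Lipschitz embedding, $\bar\phi$ is a bi-Lipschitz homeomorphism onto $\phi(X)$, whereas $i$ is a genuine inclusion of a subspace; the base points match because $\bar\phi(x_0) = \phi(x_0) \in \phi(X)$.

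The key intermediate step is the functorial identity $\phi_\# = i_\# \circ \bar\phi_\#$, which I would check directly from the definition of the pushforward: for $\mu \in X_{x_0}^{\ast\ast}$ and $f \in \Lip_{\phi(x_0)}(Y)$ we have $(\phi_\# \mu)(f) = \mu(f \circ \phi) = \mu\big((f|_{\phi(X)}) \circ \bar\phi\big) = (\bar\phi_\# \mu)(f|_{\phi(X)}) = (i_\# \bar\phi_\# \mu)(f)$, using $f \circ \phi = (f|_{\phi(X)}) \circ \bar\phi$. With this factorization, Corollary \ref{cor:bi-Lip iso} shows that $\bar\phi_\#$ is a linear homeomorphism and Lemma \ref{lem:inclusion} shows that $i_\#$ is an isometric linear embedding.

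The three assertions then follow quickly. Injectivity of $\phi_\#$ is immediate as a composition of injective maps, and boundedness is already furnished by Proposition \ref{prop:covariant}. For the closedness of the image, I would observe that $\bar\phi_\#$ is surjective onto $\phi(X)_{\phi(x_0)}^{\ast\ast}$, so the image of $\phi_\#$ coincides with that of $i_\#$; since $i_\#$ is an isometry whose domain $\phi(X)_{\phi(x_0)}^{\ast\ast}$ is complete (being a continuous dual), this image is complete and hence closed in $Y_{\phi(x_0)}^{\ast\ast}$. The argument is essentially bookkeeping, so I do not expect a serious obstacle; the only points requiring care are matching the base points across the factorization and recognizing that it is precisely the completeness of $\phi(X)_{\phi(x_0)}^{\ast\ast}$ that upgrades the isometric image into a closed subspace.
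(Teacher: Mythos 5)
Your proof is correct and follows essentially the same route as the paper, whose proof of Corollary \ref{cor:bi-Lip} simply cites Lemma \ref{lem:inclusion} and Corollary \ref{cor:bi-Lip iso}; your factorization $\phi_\# = i_\# \circ \bar\phi_\#$ and the completeness argument for closedness of the image are exactly the details that one-line proof leaves implicit. No gaps.
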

\begin{proof}
This follows from Lemma \ref{lem:inclusion} and Corollary \ref{cor:bi-Lip iso}.
\end{proof}

For $x_0, x_1 \in X$, a canonical isometric isomorphism 
\[
\Lip_{x_0}(X) \to \Lip_{x_1}(X)
\]
is defined by $f \mapsto f - f(x_1)$. 
It implies an isometric isomorphism
\begin{equation} \label{eq:trans}
X_{x_1}^{\ast\ast} \to X_{x_0}^{\ast\ast}.
\end{equation}
Namely, for $\mu \in X_{x_1}^{\ast\ast}$, the map \eqref{eq:trans} assigns an element $\mu' \in X_{x_0}^{\ast\ast}$ defined by 
\[
\mu'(f) = \mu(f - f(x_1))
\]
for all $f \in \Lip_{x_0}(X)$.

\begin{lemma} \label{lem:product DD}
Let $X$ and $Y$ be metric spaces with $x_0 \in X$ and $y_0 \in Y$. 
Then, a map 
\[
\Phi : X_{x_0}^{\ast\ast} \times Y_{y_0}^{\ast\ast} \to (X \times Y)_{(x_0,y_0)}^{\ast\ast}
\]
given by 
\[
\Phi(\mu,\nu)(h) = \mu(h(\cdot,y_0)) + \nu(h(x_0,\cdot))
\]
for $(\mu,\nu) \in X_{x_0}^{\ast\ast} \times Y_{y_0}^{\ast\ast}$ and $h \in \Lip_{(x_0,y_0)}(X \times Y)$, 
is a continuous linear map. 
\end{lemma}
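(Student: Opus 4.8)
The plan is to verify three things in turn: that for each pair $(\mu,\nu)$ the functional $\Phi(\mu,\nu)$ is a well-defined bounded (hence continuous) linear functional on $\Lip_{(x_0,y_0)}(X \times Y)$, so that it genuinely lands in the target $(X \times Y)_{(x_0,y_0)}^{\ast\ast}$; that $\Phi$ is linear in $(\mu,\nu)$; and finally that $\Phi$ is bounded as a linear map between the two normed spaces, which is exactly the asserted continuity. Everything will be controlled by a single Lipschitz-restriction estimate.

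First I would check well-definedness. Fix $h \in \Lip_{(x_0,y_0)}(X \times Y)$. The slice inclusions $X \ni x \mapsto (x,y_0) \in X \times Y$ and $Y \ni y \mapsto (x_0,y) \in X \times Y$ are nonexpanding for the product metric, so the partial functions $h(\cdot,y_0)$ and $h(x_0,\cdot)$ satisfy
\[
\Lip(h(\cdot,y_0)) \le \Lip(h) \qquad\text{and}\qquad \Lip(h(x_0,\cdot)) \le \Lip(h).
\]
Moreover, since $h(x_0,y_0)=0$, these functions vanish at $x_0$ and $y_0$ respectively, so $h(\cdot,y_0) \in \Lip_{x_0}(X)$ and $h(x_0,\cdot) \in \Lip_{y_0}(Y)$. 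Hence $\mu(h(\cdot,y_0))$ and $\nu(h(x_0,\cdot))$ are defined, and so is $\Phi(\mu,\nu)(h)$.

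Next I would record the norm estimate, which carries the rest of the argument. Because $\mu$ and $\nu$ are bounded with respect to the Lipschitz-constant norm on $\Lip_{x_0}(X)$ and $\Lip_{y_0}(Y)$,
\[
|\Phi(\mu,\nu)(h)| \le \|\mu\|\,\Lip(h(\cdot,y_0)) + \|\nu\|\,\Lip(h(x_0,\cdot)) \le (\|\mu\|+\|\nu\|)\,\Lip(h).
\]
Since $h \mapsto h(\cdot,y_0)$ and $h \mapsto h(x_0,\cdot)$ are linear restriction operations, $\Phi(\mu,\nu)$ is a linear functional, and the displayed bound shows it is continuous with $\|\Phi(\mu,\nu)\| \le \|\mu\|+\|\nu\|$; thus $\Phi(\mu,\nu) \in (X \times Y)_{(x_0,y_0)}^{\ast\ast}$. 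Linearity of $\Phi$ in the pair $(\mu,\nu)$ is immediate from the defining formula, and the same bound $\|\Phi(\mu,\nu)\| \le \|\mu\|+\|\nu\|$ shows that $\Phi$ is a bounded, hence continuous, linear map on the product space. No real obstacle arises here; the only point requiring care is the restriction estimate $\Lip(h(\cdot,y_0)), \Lip(h(x_0,\cdot)) \le \Lip(h)$, which rests on the convention that the factor inclusions into the product metric are nonexpanding.
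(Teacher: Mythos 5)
Your proof is correct and takes essentially the same route as the paper: the paper's entire argument is the single estimate $|\Phi(\mu,\nu)(h)| \le |\mu(h(\cdot,y_0))| + |\nu(h(x_0,\cdot))| \le \Lip(h)\{\|\mu\|+\|\nu\|\}$, which is exactly your key bound. You merely make explicit the points the paper leaves implicit --- that the slice maps are nonexpanding and that $h(\cdot,y_0)$, $h(x_0,\cdot)$ vanish at the base points, so the functional is well defined.
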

\begin{proof}
Let $(\mu,\nu) \in X_{x_0}^{\ast\ast} \times Y_{y_0}^{\ast\ast}$ and $h \in \Lip_{(x_0,y_0)}(X \times Y)$. 
Then, we have 
\begin{align*}
|\Phi(\mu,\nu)(h)| &\le |\mu(h(\cdot,y_0))| + |\nu(h(x_0,\cdot))| \\
&\le \Lip(h) \{\|\mu\| + \|\nu\|\}.
\end{align*}
Therefore, $\Phi$ is a bounded linear map.
\end{proof}

One can also prove that the map $\Phi$ in Lemma \ref{lem:product DD} is injective and has the closed image. 

When $V$ is a Banach space, 
let us compare the space $V_0^{\ast\ast} = \Lip_0(V)^\ast$ with the usual continuous double dual $V^{\ast\ast}$, where $0$ is the zero vector in $V$.
Since the operator norm of a linear map is no other than its Lipschitz constant, the continuous dual $V^\ast$ of $V$ is contained in $\Lip_0(V)$ as a closed subspace:
\[
V^\ast \subset \Lip_0(V).
\]
Dually, we obtain a surjective bounded linear operator 
\begin{equation} \label{eq:usual double}
r : V_0^{\ast\ast} \twoheadrightarrow V^{\ast\ast}
\end{equation}
assigning the restriction $f |_{V^\ast}$ to $V^\ast$ for each $f \in V_0^{\ast\ast}$.
Here, the surjectivity of $r$ follows from the Hahn-Banach theorem.
Then, we obtain a canonical map 
\[
\bar \delta : V \to V^{\ast\ast}
\]
defied by 
\[
\bar \delta_x = \delta_x |_{V^\ast}
\]
for $x \in V$. 
This map $\bar \delta$ is no other than the usual canonical isometric linear embedding of $V$ into $V^{\ast\ast}$ defined by the evaluation.

\subsection{Metric space target} \label{subsec:MT}
Let $A$ and $X$ denote metric spaces. 
We equip $\Lipb(A,X)$ with a topology satisfying the desired properties stated in Theorem \ref{thm:top Lip} and additional properties.

We fix $x_0 \in X$ and an isometric embedding $\delta : X \to X_{x_0}^{\ast\ast}$ defined in \eqref{eq:delta}. 
It implies an injection
\[
\delta_\# : \Lipb(A,X) \to \Lipb(A,X_{x_0}^{\ast\ast})
\]
given by $\delta_\# f = \delta_{f}$ for all $f \in \Lipb(A,X)$.
We endow $\Lipb(A,X)$ with the coarsest topology in which the map $\delta_\# : \Lipb(A,X) \to \BT(A,X_{x_0}^{\ast\ast})$ is continuous.

\begin{lemma}
The topology on $\Lipb(A,X)$ given as above, is independent on the choice of base point $x_0 \in X$.
\end{lemma}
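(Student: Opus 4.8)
The plan is to compare the two topologies coming from two base points $x_0, x_1 \in X$ by means of the canonical isometric isomorphism \eqref{eq:trans}. Write $\delta^{(i)} : X \to X_{x_i}^{\ast\ast}$ for the isometric embedding of \eqref{eq:delta} based at $x_i$, and let $\mathcal T_i$ denote the resulting topology on $\Lipb(A,X)$, i.e.\ the coarsest one making $\delta^{(i)}_\# : \Lipb(A,X) \to \BT(A, X_{x_i}^{\ast\ast})$ continuous, where $(\delta^{(i)}_\# f)(a) = \delta^{(i)}_{f(a)}$. I want to show $\mathcal T_0 = \mathcal T_1$.

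First I would pin down how the two embeddings are related. Let $J : X_{x_1}^{\ast\ast} \to X_{x_0}^{\ast\ast}$ be the isometric isomorphism of \eqref{eq:trans}, dual to the translation $f \mapsto f - f(x_1)$. Testing against $f \in \Lip_{x_0}(X)$ gives, for every $x \in X$,
\[
J\bigl(\delta^{(1)}_x\bigr)(f) = \delta^{(1)}_x\bigl(f - f(x_1)\bigr) = f(x) - f(x_1),
\]
so that $J \circ \delta^{(1)} = \delta^{(0)} - c$, where $c := \delta^{(0)}_{x_1}$ is a \emph{fixed} vector of $X_{x_0}^{\ast\ast}$. Thus $J$ does not carry $\delta^{(1)}$ exactly onto $\delta^{(0)}$, but only up to the additive constant $c$; recognizing this discrepancy and handling it correctly is the main point to watch.

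Next I would transport this relation to the level of bounded Lipschitz maps. Since $J$ is an isometric isomorphism, Proposition \ref{prop:BT}(b) shows that the post-composition $J_\# : \BT(A, X_{x_1}^{\ast\ast}) \to \BT(A, X_{x_0}^{\ast\ast})$ is a linear homeomorphism, with inverse $(J^{-1})_\#$. Applying $J_\#$ to $\delta^{(1)}_\# f$ and using the pointwise identity above yields
\[
J_\# \circ \delta^{(1)}_\#(f) = \delta^{(0)}_\#(f) - \hat c,
\]
where $\hat c : A \to X_{x_0}^{\ast\ast}$ is the constant map of value $c$. Note $\hat c$ has Lipschitz constant $0$ and bounded image, so $\hat c \in \BT(A, X_{x_0}^{\ast\ast})$ and the translation $\tau(g) = g - \hat c$ is a homeomorphism of $\BT(A, X_{x_0}^{\ast\ast})$. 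Hence $J_\# \circ \delta^{(1)}_\# = \tau \circ \delta^{(0)}_\#$.

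Finally I would invoke the standard fact that the initial topology induced by a map is unchanged upon post-composing that map with a homeomorphism of the target. Since both $J_\#$ and $\tau$ are homeomorphisms, the coarsest topology making $\delta^{(1)}_\#$ continuous equals the one making $J_\# \circ \delta^{(1)}_\# = \tau \circ \delta^{(0)}_\#$ continuous, which in turn equals the one making $\delta^{(0)}_\#$ continuous; that is, $\mathcal T_1 = \mathcal T_0$. Apart from the bookkeeping in identifying the constant $c$ and confirming $\hat c \in \BT(A, X_{x_0}^{\ast\ast})$, the argument is purely formal manipulation of initial topologies and requires no estimates.
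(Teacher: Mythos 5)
Your proof is correct and follows essentially the same route as the paper: both compare the two initial topologies via the canonical isometric isomorphism \eqref{eq:trans}, compute that $\Psi_\# \delta^1_\# - \delta^0_\#$ is a constant map, and conclude from the fact that post-composition with homeomorphisms (here $J_\#$ and the translation) does not change the initial topology. Your write-up merely makes explicit the translation-by-$\hat c$ step that the paper leaves implicit in its final sentence.
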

\begin{proof}
Let us fix $x_0, x_1 \in X$.
The isometric embeddings 
\[
\delta^0 : X \to X_{x_0}^{\ast\ast} \text{ and } \delta^1 : X \to X_{x_1}^{\ast\ast}
\]
are given by the same maps $\delta^0 = \delta^1 = \delta$.
Let $\Psi : X_{x_1}^{\ast\ast} \to X_{x_0}^{\ast\ast}$ denote a canonical map defined in \eqref{eq:trans}.
Then, 
\[
\Psi_\# : \BT(A,X_{x_1}^{\ast\ast}) \to \BT(A,X_{x_0}^{\ast\ast})
\]
is homeomorphic due to Proposition \ref{prop:BT} (b) and Corollary \ref{cor:bi-Lip iso}.
For $f \in \Lipb(A,X)$ and $h \in \Lip_{x_0}(X)$, we have
\begin{align*}
\Psi_\# \delta^1_\# f (h) &= \Psi \circ \delta_f (h) = \delta_f(h - h(x_1)) = h \circ f(\cdot) - h(x_1) \text{ and}, \\
\delta^0_\# f (h) &= \delta_f(h) = h \circ f (\cdot).
\end{align*}
Therefore, $\Psi_\# \delta^1_\# - \delta^0_\# : \Lipb(A,X) \to \BT(A,X_{x_0}^{\ast\ast})$ is a constant map. 
Since $\Psi_\#$ is homeomorphic, $\delta^1_\#$ is continuous if and only if so is $\delta^0_\#$. 
This completes the proof.
\end{proof}

Let us denote by $\MT(A,X)$ the space $\Lipb(A,X)$ with the topology induced by $\delta_\#$.
Here, the symbol $\mathrm{MT}$ indicates ``Metric space Target''. 

\begin{proposition} \label{prop:met1}
The space $\MT(A,X)$ is metrizable. 
If a sequence $f_j$ converges to $f$ in $\MT(A,X)$, then $\Lip(f_j) \to \Lip(f)$ and $f_j$ converges to $f$ uniformly as $j \to \infty$.
In particular, the statements $(0)$ and $(1)$ of Theorem \ref{thm:top Lip} hold. 
\end{proposition}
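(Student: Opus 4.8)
The plan is to exploit that, by construction, the topology on $\MT(A,X)$ is the initial (coarsest) topology making the single injective map $\delta_\# : \Lipb(A,X) \to \BT(A,X_{x_0}^{\ast\ast})$ continuous, where the target is a normed space by Proposition \ref{prop:BT}. First I would record the general topology fact that if $\iota : S \to M$ is an injective map into a metric space $(M,d_M)$, then $d_S(s,t) := d_M(\iota(s),\iota(t))$ is a genuine metric on $S$ whose topology coincides with the initial topology induced by $\iota$. Applying this to $\iota = \delta_\#$ and the norm metric of $\BT(A,X_{x_0}^{\ast\ast})$ shows that $\MT(A,X)$ is metrizable, with metric $d_{\MT}(f,g) = \|\delta_\# f - \delta_\# g\|$.

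The crucial elementary input is that $\delta$ is an \emph{isometric} embedding with $\delta_{x_0} = 0$. I would first check that $\delta_\# f = \delta \circ f$ really lands in $\BT(A,X_{x_0}^{\ast\ast})$: it is Lipschitz, and it is bounded because $\|\delta_{f(a)}\| = \|\delta_{f(a)} - \delta_{x_0}\| = d(f(a),x_0)$ is bounded in $a$ since $f$ has bounded image. Then I would extract two identities from isometry: the supremum norm is preserved, $\|\delta_\# f - \delta_\# g\|_\infty = \sup_a \|\delta_{f(a)} - \delta_{g(a)}\| = \sup_a d(f(a),g(a)) = d_\infty(f,g)$ (the uniform distance), and the Lipschitz constant is preserved, $\Lip(\delta_\# f) = \Lip(\delta \circ f) = \Lip(f)$. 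In particular $d_{\MT}(f,g) = d_\infty(f,g) + \Lip(\delta_\# f - \delta_\# g) \ge d_\infty(f,g)$, so the $\MT$-topology is finer than that of the supremum distance; combined with metrizability this is statement $(0)$ of Theorem \ref{thm:top Lip}.

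For the convergence claim, suppose $f_j \to f$ in $\MT(A,X)$, equivalently $\delta_\# f_j \to \delta_\# f$ in $\BT(A,X_{x_0}^{\ast\ast})$. By Proposition \ref{prop:BT}(a) this means $\|\delta_\# f_j - \delta_\# f\|_\infty \to 0$ and $\Lip(\delta_\# f_j - \delta_\# f) \to 0$. The first identity above turns the former into $d_\infty(f_j,f) \to 0$, i.e.\ uniform convergence. For the latter I would use that $\Lip(\cdot)$ is a seminorm on $\Lipb(A,X_{x_0}^{\ast\ast})$ (subadditive and absolutely homogeneous), so the reverse triangle inequality yields $|\Lip(\delta_\# f_j) - \Lip(\delta_\# f)| \le \Lip(\delta_\# f_j - \delta_\# f) \to 0$; since $\Lip(\delta_\# f_j) = \Lip(f_j)$ and $\Lip(\delta_\# f) = \Lip(f)$, this reads $\Lip(f_j) \to \Lip(f)$. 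Finally, statement $(1)$ is then immediate, as a real sequence converging to the finite limit $\Lip(f)$ is bounded, whence $\sup_j \Lip(f_j) < \infty$.

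The argument is essentially routine once the two isometry identities are in place. The only step requiring a little care is the seminorm/reverse-triangle-inequality passage, which is exactly what converts norm convergence of the \emph{differences} $\delta_\# f_j - \delta_\# f$ into convergence of the \emph{individual} Lipschitz constants $\Lip(f_j)$ — and this is precisely the feature (via Proposition \ref{prop:BT}(a) splitting the norm into $\|\cdot\|_\infty$ and $\Lip$) that distinguishes this topology from the compact-open topology.
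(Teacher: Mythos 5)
Your proof is correct and is exactly the argument the paper intends: the paper's own proof is the one-line remark ``This follows from the definition of the topology,'' and your unpacking---pulling back the norm metric of $\BT(A,X_{x_0}^{\ast\ast})$ through the injective map $\delta_\#$, using that $\delta$ is isometric to identify $\|\delta_\# f - \delta_\# g\|_\infty$ with the uniform distance and $\Lip(\delta_\# f)$ with $\Lip(f)$, and then applying Proposition \ref{prop:BT}(a) together with the reverse triangle inequality for the seminorm $\Lip(\cdot)$---is precisely the routine verification being left to the reader. No gaps; in fact your observation that $\Lip(f_j)\to\Lip(f)$ (not merely boundedness) matches the stronger continuity of $\Lip(\cdot)$ that the paper itself emphasizes in Remark \ref{rem:Lang top}.
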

\begin{proof}
This follows from the definition of the topology.
\end{proof}

\begin{proposition}\label{prop:met2}
The statement $(3)$ of Theorem \ref{thm:top Lip} holds. 
Namely, for $\phi : A \to A'$ a Lipschitz map between metric spaces, 
the map $\phi^\# : \MT(A',X) \to \MT(A,X)$ defined by $\phi^\# f = f \circ \phi$ is continuous. 
\end{proposition}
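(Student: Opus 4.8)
The plan is to use that, by construction, the topology on $\MT(A,X)$ is the coarsest one for which the injection $\delta_\# : \Lipb(A,X) \to \BT(A,X_{x_0}^{\ast\ast})$, $f \mapsto \delta \circ f$, is continuous. By the universal property of such an initial topology, a map $g$ into $\MT(A,X)$ is continuous if and only if $\delta_\# \circ g$ is continuous. Hence it suffices to prove that $\delta_\# \circ \phi^\# : \MT(A',X) \to \BT(A,X_{x_0}^{\ast\ast})$ is continuous. Note that both $\MT(A',X)$ and $\MT(A,X)$ share the same target $X$, so the base point $x_0 \in X$ and the embedding $\delta : X \to X_{x_0}^{\ast\ast}$ defining both topologies are the same; in particular no base-point change \eqref{eq:trans} is involved here, which is a genuine simplification compared with the base-point-independence lemma.

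First I would record the naturality identity of $\delta_\#$ with respect to the domain variable. For $f \in \Lipb(A',X)$,
\[
\delta_\#(\phi^\# f) = \delta \circ (f \circ \phi) = (\delta \circ f) \circ \phi = \phi^\#_{\mathrm{BT}}(\delta_\# f),
\]
where $\phi^\#_{\mathrm{BT}} : \BT(A',X_{x_0}^{\ast\ast}) \to \BT(A,X_{x_0}^{\ast\ast})$ denotes precomposition by $\phi$, i.e.\ $g \mapsto g \circ \phi$. Thus the square with vertical maps the two copies of $\delta_\#$ and horizontal maps $\phi^\#$ and $\phi^\#_{\mathrm{BT}}$ commutes: $\delta_\# \circ \phi^\# = \phi^\#_{\mathrm{BT}} \circ \delta_\#$.

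Then I would invoke Proposition \ref{prop:BT}(c) with the Lipschitz map $\phi : A \to A'$ and the Banach space $B = X_{x_0}^{\ast\ast}$, which gives that $\phi^\#_{\mathrm{BT}}$ is a continuous (indeed linear) map. Since $\delta_\#$ on $\MT(A',X)$ is continuous by the very definition of its topology, the composite $\phi^\#_{\mathrm{BT}} \circ \delta_\#$ is continuous, and by the commuting square so is $\delta_\# \circ \phi^\#$. The universal property of the initial topology on $\MT(A,X)$ then yields continuity of $\phi^\#$, completing the proof.

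The step that needs the most care is not any estimate but the bookkeeping: verifying that the square genuinely commutes (an immediate consequence of the associativity of composition) and that the universal property is applied in the correct direction, namely testing continuity of a map \emph{into} an initial-topology space by postcomposing with the defining map $\delta_\#$. There is no serious analytic obstacle, since all the quantitative content has already been absorbed into Proposition \ref{prop:BT}(c).
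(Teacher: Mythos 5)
Your proposal is correct and follows essentially the same route as the paper: the paper's proof consists precisely of the commutative square $\delta_\# \circ \phi^\# = \phi^\# \circ \delta_\#$ between $\MT(A',X) \to \BT(A',X_{x_0}^{\ast\ast})$ and $\MT(A,X) \to \BT(A,X_{x_0}^{\ast\ast})$ together with an appeal to Proposition \ref{prop:BT}(c). Your only addition is to make explicit the universal property of the initial topology (testing continuity into $\MT(A,X)$ by postcomposing with $\delta_\#$) and the observation that no base-point change is needed, both of which the paper leaves implicit.
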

\begin{proof}
Let us fix $x_0 \in X$. 
The following diagram 
\[
\begin{CD}
\MT(A',X) @> \delta_\# >> \BT(A',X_{x_0}^{\ast\ast}) \\
@V \phi^\# VV @VV \phi^\# V \\
\MT(A,X) @> \delta_\# >> \BT(A,X_{x_0}^{\ast\ast}) 
\end{CD}
\]
consisting of canonical maps, commutes. 
From Proposition \ref{prop:BT} (c), we obtain the conclusion.
\end{proof}

\begin{proposition} \label{prop:met3}
The statement $(2)$ of Theorem \ref{thm:top Lip} holds. 
Namely, for a Lipschitz map $\phi : X \to Y$ between metric spaces, 
the map $\phi_\# : \MT(A,X) \to \MT(A,Y); f \mapsto \phi \circ f$ is continuous. 
Further, if $\phi$ is a bi-Lipschitz embedding, then $\phi_\#$ is a topological embedding.
\end{proposition}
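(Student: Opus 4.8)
The plan is to reduce both assertions to their Banach-space counterparts (Proposition~\ref{prop:BT}) through the defining commutative square, exactly as in the proof of Proposition~\ref{prop:met2}. Fix a base point $x_0 \in X$ and set $y_0 := \phi(x_0)$. By Proposition~\ref{prop:covariant} the Lipschitz map $\phi$ induces a bounded linear operator $\phi_\# : X_{x_0}^{\ast\ast} \to Y_{y_0}^{\ast\ast}$, and by Proposition~\ref{prop:BT}~(b) this induces a continuous linear map $(\phi_\#)_\# : \BT(A, X_{x_0}^{\ast\ast}) \to \BT(A, Y_{y_0}^{\ast\ast})$. First I would record the commutativity of
\[
\begin{CD}
\MT(A,X) @> \delta_\# >> \BT(A, X_{x_0}^{\ast\ast}) \\
@V \phi_\# VV @VV (\phi_\#)_\# V \\
\MT(A,Y) @> \delta_\# >> \BT(A, Y_{y_0}^{\ast\ast})
\end{CD}
\]
which is a direct consequence of Lemma~\ref{lem:commute}: using $\phi_\# \circ \delta = \delta \circ \phi$ one checks that both composites send $f \in \Lipb(A,X)$ to $\delta_{\phi \circ f}$.

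Continuity of $\phi_\#$ then follows formally. Since $\MT(A,Y)$ carries the coarsest topology making its $\delta_\#$ continuous, it suffices to verify that $\delta_\# \circ \phi_\#$ is continuous; by the square this equals $(\phi_\#)_\# \circ \delta_\#$, a composite of continuous maps, the first by the definition of the topology on $\MT(A,X)$ and the second by Proposition~\ref{prop:BT}~(b). This establishes property $(2)$ of Theorem~\ref{thm:top Lip}.

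For the embedding statement I would first observe that each $\delta_\#$ is itself a topological embedding: it is injective, because $\delta$ is, and its target carries the initial topology with respect to it, and an injective map carrying the initial topology is an embedding. When $\phi$ is a bi-Lipschitz embedding, Corollary~\ref{cor:bi-Lip} gives that $\phi_\# : X_{x_0}^{\ast\ast} \to Y_{y_0}^{\ast\ast}$ is injective with closed image, whence $(\phi_\#)_\#$ is a topological embedding by the second assertion of Proposition~\ref{prop:BT}~(b). Consequently $\delta_\# \circ \phi_\# = (\phi_\#)_\# \circ \delta_\#$ is a composite of two topological embeddings, hence itself a topological embedding.

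The last and only mildly delicate step is a left-cancellation argument: from the facts that $\delta_\# : \MT(A,Y) \to \BT(A, Y_{y_0}^{\ast\ast})$ and $\delta_\# \circ \phi_\#$ are topological embeddings, I would conclude that $\phi_\#$ is a topological embedding. Indeed $\phi_\#$ is injective and continuous, and its inverse on the image factors as $(\delta_\# \circ \phi_\#)^{-1}$ followed by the homeomorphism furnished by $\delta_\#$ onto its image, so it is continuous. This general cancellation property of embeddings is the one point I would state with care, but it is purely formal once the two embeddings above are in hand.
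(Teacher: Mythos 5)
Your proposal is correct and follows essentially the same route as the paper: the same commutative square built from Lemma \ref{lem:commute}, with continuity via Proposition \ref{prop:covariant} and Proposition \ref{prop:BT}~(b), and the embedding case via Corollary \ref{cor:bi-Lip}. The only difference is that you spell out the formal steps the paper leaves implicit --- that $\delta_\#$ is an embedding because it is injective and induces the initial topology, and the left-cancellation of embeddings --- both of which are correct as you argue them.
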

\begin{proof}
By Lemma \ref{lem:commute},  
the following diagram 
\[
\begin{CD}
\MT(A,X) @> \delta_\# >> \BT(A,X_{x_0}^{\ast\ast}) \\
@V \phi_\# VV @VV \phi_\#{}_\# V \\
\MT(A,Y) @> \delta_\# >> \BT(A,Y_{\phi(x_0)}^{\ast\ast}) 
\end{CD}
\]
consisting of canonical maps, commutes. 
By proposition \ref{prop:covariant}, the map $\phi_\# : X_{x_0}^{\ast\ast} \to Y_{\phi(x_0)}^{\ast\ast}$ is a bounded linear map.
Hence, due to Proposition \ref{prop:BT} (b), the map $\phi_\#{}_\# : \BT(A,X_{x_0}^{\ast\ast}) \to \BT(A,Y_{\phi(x_0)}^{\ast\ast})$ is continuous.
Therefore, $\phi_\# : \MT(A,X) \to \MT(A,Y)$ is continuous. 
The second statement follows from Proposition \ref{prop:BT} (b) and Corollary \ref{cor:bi-Lip}. 
\end{proof}

\begin{remark} \upshape
To prove Proposition \ref{prop:met3}, it is important the functorial property of the correspondence $(X,x_0) \mapsto X_{x_0}^{\ast\ast}$ (Proposition \ref{prop:covariant}) and Lemma \ref{lem:commute}.

For instance, Kuratowski embedding, which is a famous isomtric embedding into a Banach space, does not have the functorial property. 
\end{remark}

\begin{proposition}\label{prop:met4}
The statement $(4)$ of Theorem \ref{thm:top Lip} holds. 
Namely, for metric spaces $A,B,X$ and $Y$, 
the canonical map 
\[
\MT(A,X) \times \MT(B,Y) \to \MT(A \times B, X \times Y)
\]
is continuous. 
\end{proposition}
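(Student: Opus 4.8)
The plan is to reduce the statement, exactly as in the proofs of Propositions~\ref{prop:met2} and~\ref{prop:met3}, to the already understood behaviour of the $\BT$-topology, exploiting that the topology on every $\MT$-space is by definition the coarsest one making the embedding $\delta_\#$ into the corresponding $\BT$-space continuous. Fix base points $x_0 \in X$ and $y_0 \in Y$, and use $(x_0,y_0)$ as the base point of $X \times Y$. Since $\delta_\#$ is injective and induces the topology, a map with values in $\MT(A \times B, X \times Y)$ is continuous if and only if its composition with $\delta_\# : \MT(A \times B, X \times Y) \to \BT(A \times B, (X \times Y)_{(x_0,y_0)}^{\ast\ast})$ is continuous. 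Hence it suffices to show that
\[
\MT(A,X) \times \MT(B,Y) \ni (f,g) \longmapsto \delta \circ (f \times g) \in \BT\bigl(A \times B, (X \times Y)_{(x_0,y_0)}^{\ast\ast}\bigr)
\]
is continuous.

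First I would assemble the candidate factorisation through the factor double duals. The map $\delta_\# \times \delta_\#$ sends $(f,g)$ to $(\delta \circ f, \delta \circ g) \in \BT(A, X_{x_0}^{\ast\ast}) \times \BT(B, Y_{y_0}^{\ast\ast})$, and this is continuous by the definition of the two $\MT$-topologies. The canonical product map of Proposition~\ref{prop:BT}(d) then carries it continuously into $\BT(A \times B, X_{x_0}^{\ast\ast} \times Y_{y_0}^{\ast\ast})$. Finally, Lemma~\ref{lem:product DD} supplies a bounded linear map $\Phi : X_{x_0}^{\ast\ast} \times Y_{y_0}^{\ast\ast} \to (X \times Y)_{(x_0,y_0)}^{\ast\ast}$, so by Proposition~\ref{prop:BT}(b) the induced $\Phi_\#$ is continuous; as $\Phi$ is moreover injective with closed image (the remark following Lemma~\ref{lem:product DD}), $\Phi_\#$ is in fact a topological embedding. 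Composing these three continuous maps yields a continuous map into the target $\BT$-space, packaged into a single commutative square of the same shape as those in Propositions~\ref{prop:met2} and~\ref{prop:met3}.

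The heart of the argument, and the step I expect to be the genuine obstacle, is verifying that this square actually commutes, i.e.\ that $\delta \circ (f \times g)$ agrees with the composite just described. Unwinding the definitions at a point $(a,b)$, commutativity reduces to comparing the functional $\delta_{(f(a),g(b))} : h \mapsto h(f(a),g(b))$ with $\Phi(\delta_{f(a)}, \delta_{g(b)}) : h \mapsto h(f(a),y_0) + h(x_0,g(b))$ on $\Lip_{(x_0,y_0)}(X \times Y)$. Because a general Lipschitz function on $X \times Y$ does not split as a sum of a function of the first variable and a function of the second, these two functionals do not coincide on the nose, and the naive square fails to commute; the real content is therefore to control the difference, which amounts to estimating the product (Lipschitz-)dual norm on $X \times Y$ in terms of the factor norms, rather than to any formal diagram chase.

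To close this gap I would work inside the closed subspace $\Phi_\#(\,\cdot\,)$, where Proposition~\ref{prop:BT}(b) lets me treat $\Phi_\#$ as a topological embedding and compare the two $\BT$-valued maps directly. The estimate should then be produced by a McShane--Whitney extension argument in the spirit of Lemma~\ref{lem:inclusion}: each $1$-Lipschitz $h$ on $X \times Y$ is controlled slot by slot through the embeddings $\delta$, and the uniform Lipschitz bounds $\sup_j \Lip(f_j) < \infty$ and $\sup_j \Lip(g_j) < \infty$ guaranteed by Theorem~\ref{thm:top Lip}(1) keep the mixed cross terms bounded. Once this compatibility is secured, continuity of the composite transfers back through $\delta_\#$, as in Proposition~\ref{prop:met3}, to give continuity of $(f,g) \mapsto f \times g$ into $\MT(A \times B, X \times Y)$. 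The residual bookkeeping, namely the choice of product metrics on $A \times B$ and $X \times Y$ and the linearity of the intermediate maps, is routine and parallels the earlier propositions.
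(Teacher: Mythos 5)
Your diagnosis of the crux is exactly right, and it is worth stressing that it applies to the paper itself: the paper's proof of Proposition~\ref{prop:met4} simply declares the square commutative and concludes from Lemma~\ref{lem:product DD} and Proposition~\ref{prop:BT}(d), but the square does \emph{not} commute. Indeed, along the bottom one gets $\Phi(\delta_{f(a)},\delta_{g(b)})(h)=h(f(a),y_0)+h(x_0,g(b))$, while along the top one gets $\delta_{(f(a),g(b))}(h)=h(f(a),g(b))$, and these differ already for $h(x,y)=\min\bigl(d_X(x,x_0),d_Y(y,y_0)\bigr)\in\Lip_{(x_0,y_0)}(X\times Y)$, which does not split as a sum of one-variable functions. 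So up to this point your set-up coincides with the paper's, and your observation that the "commutative diagram consisting of canonical maps" fails to commute is a correct identification of a gap in the paper's own argument, not a detour.

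The genuine gap in your proposal is the final paragraph: the promised McShane--Whitney estimate controlling the mixed cross term is never produced, and it cannot be, because \emph{boundedness} of the cross terms (all that $\sup_j\Lip(f_j)<\infty$ can give) is not what is needed --- continuity in the $\MT$-topology requires $\Lip\bigl(\delta_\#(f_j\times g_j)-\delta_\#(f\times g)\bigr)\to 0$, and this fails. Concretely, take $A=B=[0,1]$, $X=Y=\mathbb R$ with base point $0$, $f_j\equiv\epsilon_j$ with $\epsilon_j\to 0$, $f\equiv 0$, and $g_j=g$ the inclusion $[0,1]\hookrightarrow\mathbb R$. Then $\delta_\# f_j-\delta_\# f$ is the constant map $a\mapsto\delta_{\epsilon_j}-\delta_0$, of $\BT$-norm $\epsilon_j$, so $f_j\to f$ in $\MT(A,X)$. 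However, setting $u_j:=\delta_\#(f_j\times g)-\delta_\#(f\times g)$ and taking $z=(a,b)$, $z'=(a,b')$ with $|b-b'|=\epsilon_j$, the function $h(x,y)=\min\bigl(\max(x,0),|y-b'|\bigr)$ belongs to $\Lip_{(0,0)}(\mathbb R^2)$ with $\Lip(h)\le 1$, and
\[
\bigl(u_j(z)-u_j(z')\bigr)(h)=h(\epsilon_j,b)-h(0,b)-h(\epsilon_j,b')+h(0,b')=\epsilon_j,
\]
so $\Lip(u_j)\ge\epsilon_j/d(z,z')=1$ for every $j$. Since $\MT(A\times B,X\times Y)$ is metrizable, this shows $f_j\times g\not\to f\times g$ there: the mixed second difference is a genuine obstruction, not a term to be estimated away. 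Consequently neither your sketch nor the paper's asserted commutativity establishes the proposition for the topology defined via $\delta_\#$; closing the gap would require changing the construction (or the formulation of property $(4)$), not sharpening the extension argument.
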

\begin{proof}
Let us fix $x_0 \in X$ and $y_0 \in Y$.
Let us consider the following commutative diagram
\[
\xymatrix{
\MT(A,X) \times \MT(B,Y) \ar[r] \ar[d] & \MT(A \times B, X \times Y) \ar[dd] \\
\BT(A,X_{x_0}^{\ast\ast}) \times \BT(B,Y_{y_0}^{\ast\ast}) \ar[d] & \\
\BT(A \times B, X_{x_0}^{\ast\ast} \times Y_{x_0}^{\ast\ast}) \ar[r] & \BT(A \times B, (X \times Y)_{(x_0,y_0)}^{\ast\ast})
}
\]
consisting of canonical maps.
Since the right downward arrow is a topological embedding from the definition, 
the top rightward arrow is continuous if and only if the composition of the left two downward arrows and the bottom rightward arrow is continuous. 
It follows from Lemma \ref{lem:product DD} and Proposition \ref{prop:BT} (d).
\end{proof}

\begin{proposition}
The statement $(5)$ of Theorem \ref{thm:top Lip} holds. 
Namely, for a singleton set $\{\ast\}$ and a metric space $X$, the canonical map $X \to \MT(\{\ast\}, X)$ is homeomorphic.
\end{proposition}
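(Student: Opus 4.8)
The plan is to unwind the definition of the topology on $\MT(\{\ast\},X)$ and reduce the whole statement to the single fact, already established in \S\ref{subsec:double dual}, that $\delta : X \to X_{x_0}^{\ast\ast}$ is an \emph{isometric} embedding. First I would fix a base point $x_0 \in X$ and observe that the canonical bijection $\Lipb(\{\ast\},X) \to X$, $f \mapsto f(\ast)$, really is a bijection of sets: every map out of a one-point space is automatically Lipschitz (its Lipschitz constant is $0$) and has bounded image. So the only content is the comparison of topologies, and it suffices to show that the topology of $\MT(\{\ast\},X)$, transported to $X$ via this bijection, coincides with the metric topology of $X$.

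Next I would recall that, by construction, the topology on $\MT(\{\ast\},X)$ is the coarsest (initial) topology for which $\delta_\# : \MT(\{\ast\},X) \to \BT(\{\ast\},X_{x_0}^{\ast\ast})$ is continuous. By Proposition \ref{prop:BT} (e) the evaluation $g \mapsto g(\ast)$ is a linear homeomorphism $\BT(\{\ast\},X_{x_0}^{\ast\ast}) \to X_{x_0}^{\ast\ast}$; since precomposing with a homeomorphism does not change an initial topology, the topology of $\MT(\{\ast\},X)$ equals the initial topology for the composite $\MT(\{\ast\},X) \to X_{x_0}^{\ast\ast}$ obtained by following $\delta_\#$ with this evaluation.

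The key step is to identify that composite. For $x \in X$ with corresponding $f_x \in \Lipb(\{\ast\},X)$, $f_x(\ast)=x$, one computes $\delta_\# f_x = \delta_{f_x}$, the map $\ast \mapsto \delta_{f_x(\ast)} = \delta_x$, so evaluating at $\ast$ returns $\delta_x$. Hence the composite is exactly the isometric embedding $\delta : X \to X_{x_0}^{\ast\ast}$ under the identification $\MT(\{\ast\},X) \cong X$.

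Finally, because $\delta$ is an isometric embedding we have $\|\delta_x - \delta_y\| = d(x,y)$ for all $x,y \in X$, so the initial topology induced by $\delta$ is precisely the metric topology of $X$: preimages under $\delta$ of open balls of $X_{x_0}^{\ast\ast}$ are open balls of $X$ and conversely. This shows the transported topology agrees with the original one, i.e.\ the canonical map $X \to \MT(\{\ast\},X)$ is a homeomorphism. I expect no substantial obstacle here; the only point requiring care is bookkeeping the two reductions—transporting ``coarsest topology making $\delta_\#$ continuous'' through the homeomorphism of Proposition \ref{prop:BT} (e), and then recognizing the resulting initial topology of the isometry $\delta$ as the metric topology of $X$.
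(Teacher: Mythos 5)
Your argument is correct and is exactly the paper's proof spelled out: the paper disposes of this proposition in one line, citing Proposition \ref{prop:BT} (e) and the definition of the $\MT$-topology, and your three reductions (evaluation homeomorphism, identification of the composite with $\delta$, initial topology of an isometric embedding equals the metric topology) are precisely what that citation compresses. The only nitpick is that preimages under $\delta$ of arbitrary balls of $X_{x_0}^{\ast\ast}$ need not be balls of $X$ (only those centered on $\delta(X)$ are); but continuity of the isometry $\delta$ already gives that direction, and your converse inclusion is the one that matters, so nothing is missing.
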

\begin{proof}
This follows from Proposition \ref{prop:BT} (e) and the definition of the topology.
\end{proof}

When $V$ is a Banach space and $Z$ is a metric space, we compare the topologies of $\MT(Z,V)$ and $\BT(Z,V)$.

\begin{proposition}\label{prop:TVS}
Let $V$ be a Banach space and $Z$ a metric space. 
Then, the topologies on $\MT(Z,V)$ and $\BT(Z,V)$ coincide with each other.
\end{proposition}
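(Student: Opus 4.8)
The plan is to exploit the factorisation of the evaluation embedding $\delta\colon V\to V_0^{\ast\ast}$ through the ordinary double dual. Recall from \S\ref{subsec:double dual} that the restriction operator $r\colon V_0^{\ast\ast}\to V^{\ast\ast}$ of \eqref{eq:usual double} is a bounded linear surjection satisfying $r\circ\delta=\bar\delta$, where $\bar\delta\colon V\to V^{\ast\ast}$ is the usual canonical \emph{linear} isometric embedding of a Banach space into its double dual. Since $\MT(Z,V)$ is metrizable by Proposition \ref{prop:met1} and $\BT(Z,V)$ carries a norm, it suffices to prove that the identity map between the two is continuous in both directions. The whole point is that $\delta$ itself is isometric but \emph{not} linear, so the pullback metric $(f,g)\mapsto\|\delta_\# f-\delta_\# g\|$ coming from $\BT(Z,V_0^{\ast\ast})$ is not obviously comparable to the norm metric of $\BT(Z,V)$; the linear surrogate $\bar\delta$ is what lets me control one direction cleanly.

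First I would show that the topology of $\BT(Z,V)$ is contained in that of $\MT(Z,V)$, i.e. that $\mathrm{id}\colon\MT(Z,V)\to\BT(Z,V)$ is continuous. Because $\bar\delta$ is linear and isometric, for all $f,g$ one has $\|\bar\delta\circ f-\bar\delta\circ g\|_\infty=\|f-g\|_\infty$ and $\Lip(\bar\delta\circ f-\bar\delta\circ g)=\Lip(f-g)$, so $\bar\delta_\#\colon\BT(Z,V)\to\BT(Z,V^{\ast\ast})$ preserves the full norm $\|\cdot\|_\infty+\Lip(\cdot)$ and is in particular a topological embedding (this also follows from Proposition \ref{prop:BT}(b), as $\bar\delta$ is injective with closed image). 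On the other hand $\bar\delta_\#=r_\#\circ\delta_\#$, where $r_\#$ is continuous by Proposition \ref{prop:BT}(b) applied to the bounded operator $r$, and $\delta_\#\colon\MT(Z,V)\to\BT(Z,V_0^{\ast\ast})$ is continuous by the very definition of the $\MT$-topology as the coarsest one making $\delta_\#$ continuous. Hence $\bar\delta_\#$ is continuous out of $\MT(Z,V)$; composing with the inverse of the embedding $\bar\delta_\#$ onto its image exhibits $\mathrm{id}\colon\MT(Z,V)\to\BT(Z,V)$ as continuous, which is the desired inclusion.

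The reverse inclusion, that the topology of $\MT(Z,V)$ is contained in that of $\BT(Z,V)$, is the crux and the step I expect to be the main obstacle. By definition of the initial topology it is exactly the continuity of $\delta_\#\colon\BT(Z,V)\to\BT(Z,V_0^{\ast\ast})$, for which one must bound $\|\delta_\# f-\delta_\# g\|_\infty+\Lip(\delta_\# f-\delta_\# g)$ from above in terms of $\|f-g\|_\infty+\Lip(f-g)$. The supremum part is immediate, since $\delta$ is an isometry gives $\|\delta_\# f-\delta_\# g\|_\infty=\|f-g\|_\infty$. The delicate quantity is the Lipschitz part $\Lip(\delta_\# f-\delta_\# g)$: because $\delta_\#$ is not linear one cannot pull the difference inside $\delta$, and the naive groupings of the increment $\delta_{f(z)}-\delta_{g(z)}-\delta_{f(z')}+\delta_{g(z')}$ give only the two bounds $\le 2\|f-g\|_\infty$ and $\le(\Lip f+\Lip g)\,d(z,z')$, neither of which alone yields the needed decay. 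The approach I would take is to unfold the $V_0^{\ast\ast}$-norm of this increment by Hahn--Banach as a supremum over unit-Lipschitz test functions $\varphi\in\Lip_0(V)$ of $\varphi(f(z))-\varphi(g(z))-\varphi(f(z'))+\varphi(g(z'))$, and to estimate the resulting difference quotient uniformly in $z,z'$ and in $\varphi$, interpolating between the two groupings according to how far apart the base values $f(z),g(z)$ and $f(z'),g(z')$ lie. Making this interpolation quantitative, using the special structure of the evaluation embedding rather than that of an arbitrary isometric embedding, is precisely the hard part; once it is carried through, both inclusions hold and Proposition \ref{prop:TVS} follows.
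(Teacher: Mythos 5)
Your first half is exactly the paper's argument: the paper also proves continuity of $\mathrm{id}\colon\MT(Z,V)\to\BT(Z,V)$ by factoring $\bar\delta_\#=r_\#\circ\delta_\#$ through \eqref{eq:usual double} and using that $\bar\delta$ is an isometric linear embedding, so $\bar\delta_\#$ is a topological embedding by Proposition \ref{prop:BT}\,(b). The genuine gap is in the reverse direction, and it is not merely the step you ``expect to be the main obstacle'': the quantitative estimate you defer does not exist. Your plan reduces continuity of $\mathrm{id}\colon\BT(Z,V)\to\MT(Z,V)$ to bounding $\Lip(\delta_\# f-\delta_\# g)$ by a quantity tending to $0$ with $\|f-g\|_\infty+\Lip(f-g)$ (locally, allowing constants depending on $\Lip(f)$, $\Lip(g)$). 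Take $Z=[0,1]$, $V=\mathbb R$ with base point $0$, $f(z)=z$ and $g_\epsilon(z)=z+\epsilon$, so that $\|f-g_\epsilon\|_\infty+\Lip(f-g_\epsilon)=\epsilon$. For the test function $\varphi=\mathrm{dist}(\,\cdot\,,\mathbb R\setminus(0,2\epsilon))\in\Lip_0(\mathbb R)$, which satisfies $\Lip(\varphi)=1$, one gets, writing $h=\delta_\# f-\delta_\# g_\epsilon$,
\[
\bigl(h(0)-h(\epsilon)\bigr)(\varphi)=\varphi(0)-2\varphi(\epsilon)+\varphi(2\epsilon)=-2\epsilon,
\]
hence $\|h(0)-h(\epsilon)\|\ge 2\epsilon=2\,|0-\epsilon|$ and $\Lip(\delta_\# f-\delta_\# g_\epsilon)\ge 2$ for every $\epsilon>0$. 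At the critical scale $d(z,z')=\epsilon=\|f-g_\epsilon\|_\infty$ both of your naive bounds, $2\|f-g\|_\infty$ and $(\Lip(f)+\Lip(g))\,d(z,z')$, are attained simultaneously, so no interpolation between them can produce decay: the map $\delta_\#\colon\BT(Z,V)\to\BT(Z,V_0^{\ast\ast})$ is genuinely discontinuous away from the zero map, and the Hahn--Banach interpolation you propose cannot be carried through.

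The paper's proof diverges from yours at precisely this point. It verifies continuity of $\mathrm{id}\colon\BT(Z,V)\to\MT(Z,V)$ \emph{only at the zero map}, where no estimate is needed: since $\delta$ is isometric and $\delta_0=0$, the basic $\MT$-neighborhoods $\delta_\#^{-1}(\{g:\|g\|_\infty<\rho,\ \Lip(g)<\ell\})$ of $0$ are literally equal to the $\BT$-sets $\{f:\|f\|_\infty<\rho,\ \Lip(f)<\ell\}$. It then translates continuity at $0$ to all points using that $\MT(Z,V)$ is a topological abelian group (Proposition \ref{prop:TG}); this group-translation device is the idea missing from your proposal. You should be aware, though, that your reduction is faithful to the definitions: global continuity of $\delta_\#$ on $\BT(Z,V)$ is \emph{equivalent} to Proposition \ref{prop:TVS}, so the computation above is in real tension with the statement itself. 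Indeed, reading the same example as adding the constant $\epsilon$ to $f$ shows that addition fails to be jointly continuous at $(f,0)$ in the $\MT$-topology, which puts pressure on Proposition \ref{prop:TG} and, upstream, on Corollary \ref{cor:product} and Theorem \ref{thm:top Lip}\,(4): in the diagram proving Proposition \ref{prop:met4}, the functional $\delta_{(f(a),g(b))}$ is compared with $h\mapsto h(f(a),y_0)+h(x_0,g(b))$, and these do not agree. So your attempt has a concrete, irreparable gap as formulated, and closing it along the paper's lines requires scrutinizing the translation step rather than seeking the estimate you outline.
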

\begin{proof}
We first prove that the identity $\mathrm{id} : \MT(Z,V) \to \BT(Z,V)$ is continuous. 
Let us consider the following commutative diagram
\[
\begin{CD}
\MT(Z,V) @> \mathrm{id} >> \BT(Z,V) \\
@V \delta_\# VV @VV \overline{\delta}_\# V \\
\BT(Z,V_0^{\ast\ast}) @> r_\# >> \BT(Z,V^{\ast\ast}).
\end{CD}
\]
Here, the bottom rightward arrow $r_\#$ is continuous, because it is induced by the bounded linear map 
$r: V_0^{\ast\ast} \twoheadrightarrow V^{\ast\ast}$ given in \eqref{eq:usual double}. 
Since $\bar \delta : V \to V^{\ast\ast}$ is an isometric linear embedding, 
the induced map $\bar \delta_\#$ is a topological embedding, due to Proposition \ref{prop:BT} (b).
It follows from the continuity of $r_\# \circ \delta_\#$ that $\mathrm{id} : \MT(Z,V) \to \BT(Z,V)$ is continuous.

Every neighborhood at $0$ in $\MT(Z,V)$ is generated by sets of form
\begin{align*}
\delta_\#^{-1}(\{g \in \Lipb(Z,V_0^{\ast\ast}) \mid \|g\|_\infty < \rho \text{ and } \Lip(g) < \ell \}) 
\end{align*}
for $\rho, \ell > 0$.
Since $\delta$ is the isometric embedding, these sets are equal to 
\begin{align*}
\{f \in \Lipb(Z,V) \mid \|f\|_\infty < \rho \text{ and } \Lip(f) < \ell \}
\end{align*}
which are also open neighborhoods of $0$ in $\BT(Z,V)$. 
Hence, the identity $\mathrm{id} : \BT(Z,V) \to \MT(Z,V)$ is continuous at $0$. 
Since $\MT(Z,V)$ is a topological group by Proposition \ref{prop:TG}, the group homomorphism $\mathrm{id} : \BT(Z,V) \to \MT(Z,V)$ is continuous on the whole set. 
\end{proof}

As a corollary to Propositions \ref{prop:TVS} and \ref{prop:met3}, we obtain 
\begin{corollary} \label{cor:emb}
Let $Z$ and $X$ be metric space. 
Let $\phi : X \to V$ be a bi-Lipschitz embedding into a Banach space $V$.
Then, the map $\phi_\# : \MT(Z,X) \to \BT(Z,V)$ is a topological embedding.

That is, the topology on $\MT(Z,X)$ coincides with the subspace topology by regarding $\Lipb(Z,X)$ as a subspace of $\BT(Z,V)$ via the injection $\phi_\#$.
\end{corollary}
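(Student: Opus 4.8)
The plan is to factor the map $\phi_\#$ as a composition of two maps whose relevant properties have already been established, so that the corollary becomes purely formal. First I would apply Proposition \ref{prop:met3} with the bi-Lipschitz embedding $\phi : X \to V$, where the target metric space is taken to be the underlying metric space of the Banach space $V$. Its second assertion then gives that
\[
\phi_\# : \MT(Z,X) \to \MT(Z,V), \qquad f \mapsto \phi \circ f,
\]
is a topological embedding. Second, I would invoke Proposition \ref{prop:TVS}, which says that for the Banach space $V$ the $\MT$- and $\BT$-topologies on the common underlying set $\Lipb(Z,V)$ coincide; equivalently, the identity map $\MT(Z,V) \to \BT(Z,V)$ is a homeomorphism.

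With these two facts in hand, the map $\phi_\# : \MT(Z,X) \to \BT(Z,V)$ named in the statement is literally the composite of the topological embedding $\MT(Z,X) \to \MT(Z,V)$ with the identity homeomorphism $\MT(Z,V) \to \BT(Z,V)$. Since the composition of a topological embedding followed by a homeomorphism is again a topological embedding, the conclusion follows at once. The final displayed sentence of the corollary, identifying the topology on $\MT(Z,X)$ with the subspace topology inherited from $\BT(Z,V)$ through $\phi_\#$, is just a reformulation of the assertion that $\phi_\#$ is a topological embedding, so it requires no separate argument.

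I do not expect any genuine obstacle here: all the substantive analytic content—the continuity and embedding behaviour of $\phi_\#$ under a bi-Lipschitz embedding, resting on the functoriality of $(X,x_0)\mapsto X_{x_0}^{\ast\ast}$ and Corollary \ref{cor:bi-Lip}, together with the coincidence of the $\MT$- and $\BT$-topologies for Banach targets—has already been absorbed into Propositions \ref{prop:met3} and \ref{prop:TVS}. The only point worth stating explicitly is that the two occurrences of the symbol $\phi_\#$ (as a map into $\MT(Z,V)$ and as a map into $\BT(Z,V)$) denote the same underlying set map $f\mapsto\phi\circ f$, so that passing from one codomain to the other is effected precisely by the identity homeomorphism of Proposition \ref{prop:TVS}.
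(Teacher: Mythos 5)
Your proposal is correct and is exactly the paper's argument: the corollary is stated there "as a corollary to Propositions \ref{prop:TVS} and \ref{prop:met3}," i.e.\ one composes the topological embedding $\phi_\# : \MT(Z,X) \to \MT(Z,V)$ from the second assertion of Proposition \ref{prop:met3} with the identity homeomorphism $\MT(Z,V) \to \BT(Z,V)$ from Proposition \ref{prop:TVS}. Your explicit remark that the two occurrences of $\phi_\#$ are the same underlying set map is a harmless clarification of what the paper leaves implicit.
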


Finally, we remark a relation between our topology on the space of Lipschitz maps and the $C^1$-topology on the space of smooth maps, when a domain and a target are smooth compact Riemannian manifolds. 

Let us denote by $M$ and $N$ compact smooth manifolds, where $N$ has no boundary and $M$ possibly has piecewise smooth boundary.
The set of all $C^1$-maps from $M$ to $N$ is denoted by $C^1(M,N)$. 
We fix Riemannian metrics on $M$ and $N$, and regard them as metric spaces associated to the Riemannian metrics. 
Since $M$ is compact, 
$C^1(M,N)$ is a subset of $\Lip(M,N)$. 
The following gives a characterization of the $C^1$-topology on $C^1(M,N)$ in terms of the topology of $\MT(M,N)$:

\begin{proposition} \label{prop:C1}
Let $M$ and $N$ as above.
Then, the relative topology on $C^1(M,N)$ as a subset of $\MT(M,N)$ coincides with the $C^1$-topology on it.
\end{proposition}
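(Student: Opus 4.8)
The plan is to reduce the statement, by means of a smooth bi-Lipschitz embedding of the target, to the elementary case of vector-valued maps, where the coincidence follows from the identity $\Lip(g)=\sup_x\|Dg_x\|_{\mathrm{op}}$. First I would fix a smooth embedding $\iota\colon N\hookrightarrow\mathbb R^n$ (for instance a Whitney embedding). Since $N$ is compact, its Riemannian distance and the Euclidean distance restricted to $\iota(N)$ are comparable, so $\iota$ is bi-Lipschitz onto its image. By Corollary \ref{cor:emb}, the induced map $\iota_\#\colon\MT(M,N)\to\BT(M,\mathbb R^n)$ is then a topological embedding, and hence the relative topology that $C^1(M,N)$ inherits from $\MT(M,N)$ agrees with the relative topology that $\iota\circ C^1(M,N)$ inherits from $\BT(M,\mathbb R^n)$. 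On the other hand, it is a standard fact of differential topology, proved by composing with the nearest-point retraction $\pi$ of a tubular neighborhood of $\iota(N)$, that the $C^1$-topology on $C^1(M,N)$ coincides with the relative topology it inherits from $C^1(M,\mathbb R^n)$ via $\iota$. Thus it suffices to treat the case $N=\mathbb R^n$, that is, to compare on $C^1(M,\mathbb R^n)$ the topology induced by the $\BT$-norm $\|g\|=\|g\|_\infty+\Lip(g)$ with the usual $C^1$-topology given by $\|g\|_\infty+\sup_x\|Dg_x\|_{\mathrm{op}}$.

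For this last comparison I would invoke the identity $\Lip(g)=\sup_{x\in M}\|Dg_x\|_{\mathrm{op}}$, valid for every $g\in C^1(M,\mathbb R^n)$. One inequality holds because $M$, being a compact Riemannian manifold, is a length space: integrating $Dg$ along a path joining two points and passing to the infimum over such paths gives $|g(a)-g(b)|\le d(a,b)\sup_x\|Dg_x\|_{\mathrm{op}}$; the reverse inequality follows by differentiating along short geodesics, the boundary points being handled by approximating from the interior and using the continuity of $Dg$. Since the difference $g_j-g$ of two $C^1$-maps is again $C^1$, this identity yields $\Lip(g_j-g)=\sup_x\|Dg_{j,x}-Dg_x\|_{\mathrm{op}}$. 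By Proposition \ref{prop:BT} (a), convergence $g_j\to g$ in $\BT(M,\mathbb R^n)$ means precisely that $\|g_j-g\|_\infty\to0$ and $\Lip(g_j-g)\to0$, i.e. that $g_j\to g$ and $Dg_j\to Dg$ uniformly, which is exactly $C^1$-convergence. As both topologies are metrizable (Proposition \ref{prop:met1}), this coincidence of convergent sequences identifies the two topologies and settles the reduced case.

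I expect the main obstacle to be the passage from the vector-valued picture back to the manifold target, namely the claim that the $C^1$-topology on $C^1(M,N)$ is the one induced by $\iota$ from $C^1(M,\mathbb R^n)$. The subtlety is that for maps into $N$ the differentials $Df_{j,x}$ and $Df_x$ live in different tangent spaces and cannot be subtracted directly; composing with $\iota$ linearises the situation by turning every differential into an honest linear map $T_xM\to\mathbb R^n$, while the retraction furnishes the converse comparison $Df_j=D\pi\circ D(\iota\circ f_j)$ as soon as $\iota\circ f_j$ is uniformly close to $\iota\circ f$. The remaining ingredients, namely the uniform continuity and uniform injectivity of $D\iota$ over the compact manifold $N$ and the estimate $\sup_x\|Dg_x\|_{\mathrm{op}}\le\Lip(g)$ near $\partial M$, I expect to be routine.
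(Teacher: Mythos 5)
Your proposal is correct and follows essentially the same route as the paper's proof: a Whitney embedding $\Phi\colon N\to\mathbb R^K$, which is bi-Lipschitz by compactness, reduces the claim via Corollary \ref{cor:emb} and Proposition \ref{prop:TVS} to the Euclidean-target case, which is then settled by the identity $\Lip(g)=\sup_x\|Dg_x\|_{\mathrm{op}}$ for $g\in C^1(M,\mathbb R^K)$. The only difference is that you spell out, via the tubular-neighborhood retraction, why $\Phi_\#\colon C^1(M,N)\to C^1(M,\mathbb R^K)$ is a topological embedding for the $C^1$-topologies, a step the paper asserts without proof.
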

\begin{proof}
Let us take a Whitney smooth embedding $\Phi : N \to \mathbb R^K$ into a Euclidean space $\mathbb R^K$ for a large $K \ge \dim N$. 
Then, $\Phi$ is also a bi-Lipschitz embedding, since $N$ is compact. 
We have the following commutative diagram 
\[
\begin{CD}
C^1(M,N) @> \subset >> \MT(M,N) \\
@V \Phi_\# VV @VV \Phi_\# V \\
C^1(M,\mathbb R^K) @> \subset >> \MT(M,\mathbb R^K). 
\end{CD}
\]
Since both two $\Phi_\#$ in this diagram are topological embeddings, if $C^1(M,\mathbb R^K) \subset \MT(M,\mathbb R^K)$ is a topological embedding, then so is $C^1(M,N) \subset \MT(M,N)$.
Due to Proposition \ref{prop:TVS}, $\BT(M,\mathbb R^K)$ and $\MT(M,\mathbb R^K)$ are same as topological spaces.
Let us take a sequence $f_j$ and an element $f$ in $C^1(M,\mathbb R^K)$. 
Since $\Lip(g) = \|\nabla g\|_\infty$ for any $g \in C^1(M,\mathbb R^K)$, the sequence $f_j$ converges to $f$ in the topology of $\MT(M,\mathbb R^K) = \BT(M,\mathbb R^K)$ if and only if it converges to $f$ in the $C^1$-topology. 
This completes the proof.
\end{proof}


\begin{thebibliography}{9999}
\bibitem{ABF} 
E.~Acerbi, G.~Buttazzo, and N.~Fusco, 
Semicontinuity and relaxation for integrals depending on vector valued functions. 
J. Math. Pures Appl. (9) 62 (1983), no. 4, 371--387 (1984). 

\bibitem{AK}
L.~Ambrosio and B.~Kirchheim, 
Currents in metric spaces. Acta Math. 185(1), 1--80 (2000).

\bibitem{B} 
G.~Bredon, 
Sheaf theory. 
Second edition. Graduate Texts in Mathematics, 170. Springer-Verlag, New York, 1997. 

\bibitem{BBI} 
D.~Burago, Y.~Burago, and S.~Ivanov, 
A course in metric geometry. 
Graduate Studies in Mathematics, 33. American Mathematical Society, Providence, RI, 2001. 

\bibitem{BGP} 
Y.~Burago, M.~Gromov, and G.~Perel'man, 
A. D. Aleksandrov spaces with curvatures bounded below. 
(Russian) 
Uspekhi Mat. Nauk 47 (1992), no. 2(284), 3--51, 222; translation in Russian Math. Surveys 47 (1992), no. 2, 1--58.

\bibitem{H} 
S.~K.~Hansen, 
Measure homology.
Math. Scand. 83 (1998), no. 2, 205--219. 

\bibitem{He}
J.~Heinonen, 
Geometric embeddings of metric spaces. 
Report. University of Jyv\"askyl\"a Department of Mathematics and Statistics, 90. 
University of Jyv\"askyl\"a, Jyv\"askyl\"a, 2003. ii+44 pp. ISBN: 951-39-1605-7.

\bibitem{L} 
U.~Lang,
Local currents in metric spaces. 
J. Geom. Anal. 21 (2011), no. 3, 683--742. 

\bibitem{Mi} 
A.~Mitsuishi, 
The coincidence of the homologies of integral currents and of integral singular chains, via cosheaves, 
arXiv:1304.0152, Preprint. 

\bibitem{MY} 
A.~Mitsuishi and T.~Yamaguchi, 
Locally Lipschitz contractibility of Alexandrov spaces and its applications, 
Pacific J. Math. Vol. 270, No. 2, (2014), 393--421. 

\bibitem{Mo} 
S.~Mongodi, 
Homology by metric currents, 
arXiv:1304.6205, Preprint. 

\bibitem{O} 
S.-I.~Ohta, 
Convexities of metric spaces, 
Geom. Dedicata 125, (2007), no.~1, 225--250.

\bibitem{PP} 
G.~Pere'lman and A.~Petrunin, 
Extremal subsets in Aleksandrov spaces and the generalized Liberman theorem. (Russian) 
Algebra i Analiz 5 (1993), no. 1, 242--256; translation in St. Petersburg Math. J. 5 (1994), no. 1, 215-227.

\bibitem{PS1} 
E.~Paolini, E.~Stepanov, 
Decomposition of acyclic normal currents in a metric space, 
J. Funct. Anal. 263 (2012), no. 11, 3358--3390. 

\bibitem{PS2} 
E.~Paolini, E.~Stepanov, 
Structure of metric cycles and normal one-dimensional currents, 
J. Funct. Anal. 264 (2013), no. 6, 1269--1295. 

\bibitem{Pet} 
A.~Petrunin, 
Semiconcave functions in Alexandrov's geometry. 
Surveys in differential geometry. Vol. XI, 137--201, 
Surv. Differ. Geom., 11, Int. Press, Somerville, MA, 2007. 

\bibitem{Ra} 
J.~G.~Ratcliffe, 
Foundations of hyperbolic manifolds. 
Graduate Texts in Mathematics 149, 2nd ed. Springer.

\bibitem{DR} 
G.~de~Rham, 
Varietes differentiables. Formes, courants, formes harmoniques. 
Actualites Sci. Ind., no. 1222. 
Hermann et Cie, Paris, 1955. 

\bibitem{RS} 
C.~Riedweg and D.~Sch\"appi, 
Singular (Lipschitz) homology and homology of integral currents, 
arXiv:0902.3831, Preprint. 

\bibitem{Sakai}
K.~Sakai, 
Geometric aspects of general topology. 
Springer Monographs in Mathematics. Springer, Tokyo, 2013.

\bibitem{T} 
W.~Thurston, 
The geometry and topology of three-manifolds, 
Princeton lecture notes (1978--1981).

\bibitem{W} 
N.~Weaver, 
Lipschitz algebras. World Scientific Publishing Co., Inc., River Edge, NJ, 1999.  

\bibitem{Y} 
T.~Yamaguchi, 
Simplicial volumes of Alexandrov spaces. 
Kyushu J. Math. 51 (1997), no. 2, 273--296. 

\bibitem{Z} A.~Zastrow, 
On the (non)-coincidence of Milnor-Thurston homology theory with singular homology theory.
Pacific J. Math. 186 (1998), no. 2, 369--396. 
\end{thebibliography}
\end{document}